\def\ctimes{{\scriptstyle\times}}
\def\omsub#1#2{\omega_{\scriptscriptstyle\vphantom{'}\!#1\!/\!#2}}
\def\omsubt#1#2{\omega_{\scriptscriptstyle\!#1\!/\!#2}}
\def\Alphasub#1#2{\alpha_{\scriptscriptstyle\vphantom{'}#1/\!#2}}
\def\csub#1#2{c_{\scriptscriptstyle\vphantom{'}\!#1\!/\!#2}}
\def\m{{\mathfrak m}}
\def\req#1#2{\>\>\>{\buildrel\ref{#2}\over#1}\>\>\>}
\let\liff\Longleftrightarrow
\let\ge\geqslant
\let\le\leqslant
\begin{document}

\title{Local invariants of isogenous elliptic curves}

\author{Tim and Vladimir Dokchitser}
\ufootnote{Accepted for publication in Trans. Amer. Math. Soc. on 18.08.2013\\[-7pt]}
\address{Dept of Mathematics, University Walk, Bristol BS8 1TW, United Kingdom}
\email{tim.dokchitser@bristol.ac.uk}
\address{Emmanuel College, Cambridge CB2 3AP, United Kingdom}
\email{v.dokchitser@dpmms.cam.ac.uk}
\subjclass[2000]{Primary 11G07; Secondary 11G05, 11G40}

\begin{abstract}
We investigate how various invariants of elliptic curves, 
such as the discriminant, Kodaira type, Tamagawa number and real and 
complex periods, change under an isogeny of prime degree $p$. 
For elliptic curves over $l$-adic fields, the classification is almost complete 
(the exception is wild potentially supersingular reduction when $l=p$),
and is summarised in a table.
\end{abstract}

\llap{.\hskip 10cm} \vskip 3mm

\maketitle

\tableofcontents

\section{Introduction}
\label{s:intro}

We address the question how various invariants of elliptic curves, 
such as the discriminant, Kodaira type, Tamagawa number and real and 
complex periods, change under an isogeny. 
As every isogeny factors as a composition of 
endomorphisms and isogenies of prime degree,
throughout the paper we just consider a fixed isogeny
$$
  \phi: E \lar E'
$$
of prime degree $p$. 
The first result is a slight extension of a theorem of Coates 
(\cite{CoaLMS}, appendix), relating the discriminants 
$\Delta_E$ and $\Delta_{E'}$:

\begin{theorem}
\label{ideltaquo}
Let $\cK$ be a field of characteristic 0 and $\phi: E\to E'$ 
a \hbox{$p$-isogeny} of elliptic curves over $\cK$.
If $p>3$, then $\Delta_E^p/\Delta_{E'}$ is a 12th power in~$\cK$.
For $p=2,3$ this is a 3rd, respectively 4th power.
\end{theorem}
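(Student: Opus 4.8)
The plan is to reduce the statement to a single intrinsic invariant of $\phi$ and then compute it. Fix Weierstrass models of $E$ and $E'$, with invariant differentials $\omega_E,\omega_{E'}$ and discriminants $\Delta_E,\Delta_{E'}$, and define $c\in\cK^*$ by $\phi^*\omega_{E'}=c\,\omega_E$. Recall that under a change of model $\omega_E\mapsto\lambda\omega_E$ one has $\Delta_E\mapsto\lambda^{-12}\Delta_E$, so that $\Delta_E\,\omega_E^{\otimes12}$, viewed as a section of $(\Omega^1_{E/\cK})^{\otimes12}$, is independent of the model. Applying $\phi^*$ gives $\phi^*(\Delta_{E'}\omega_{E'}^{\otimes12})=\Delta_{E'}c^{12}\,\omega_E^{\otimes12}$, whence $R:=\Delta_{E'}c^{12}/\Delta_E\in\cK^*$ is independent of all choices of models, the scalings from the two models cancelling. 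Since $\Delta_E^p/\Delta_{E'}=c^{12}\Delta_E^{\,p-1}/R$ and $c^{12}$ is already a $12$th power, the case $p>3$ is equivalent to the intrinsic congruence $R\equiv\Delta_E^{\,p-1}\pmod{(\cK^*)^{12}}$. One also checks $c\hat c=p$ for the dual isogeny $\hat\phi$, so that $R(\phi)R(\hat\phi)=p^{12}$, which both serves as a consistency constraint and exhibits the symmetry between $E$ and $E'$.

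To evaluate $R$ I would pass to the algebraic closure and use the kernel $C=\ker\phi\subset E(\overline{\cK})$, a cyclic group of order $p$ stable under $\Gal(\overline{\cK}/\cK)$. Embedding into $\C$ (legitimate since $\mathrm{char}\,\cK=0$) and uniformizing $E=\C/\Lambda$, $E'=\C/\Lambda'$ with $\Lambda\subset\Lambda'$ of index $p$, one has $R=\Delta(\Lambda')/\Delta(\Lambda)$ in the normalization $c=1$. Writing $\Lambda=\Z\omega_1+\Z\omega_2$, $\Lambda'=\Z\omega_1+\Z\omega_2/p$ with $\tau=\omega_2/\omega_1$, the normalization $\Delta=\eta^{24}$ gives $R=(\eta(\tau/p)/\eta(\tau))^{24}$, and hence $\Delta_E^p/\Delta_{E'}=\bigl(\omega_1^{-(p-1)}(\eta(\tau)^p/\eta(\tau/p))^2\bigr)^{12}$ up to a harmless constant, confirming transcendentally that the quantity is a $12$th power. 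The algebraic content is to rewrite this $12$th root in terms of $C$: via Vélu's formulas, or equivalently the $\sigma$-function product expansion of $\eta$, the invariant $R$ equals an explicit power of $p$ times a symmetric product over the points of $C\setminus\{O\}$ of values of Weierstrass functions (concretely of the $x$-coordinates $x_Q$ and their differences), and this product is the genuinely algebraic object I would track.

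Finally I would descend. Since $\Gal(\overline{\cK}/\cK)$ permutes $C\setminus\{O\}$, the symmetric product over the kernel lies in $\cK^*$, and reassembling it with the power of $p$ and the model factors into $\Delta_E$ and $c$ produces a candidate $\cK$-rational $12$th root of $\Delta_E^p/\Delta_{E'}$. Weighing the exponents shows that for $p>3$ a full $12$th root is defined over $\cK$: the $(p-1)/2\ge2$ pairs $\{Q,-Q\}$ supply exactly the divisibility needed to absorb the exponent $12$. I expect the main obstacle to be precisely this bookkeeping, namely pinning down the exact power of $p$ in the product formula and verifying that a $\cK$-rational root of order $12$, rather than only of some smaller order, really exists.

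The same mechanism degenerates for $p=2,3$, which explains the smaller exponents. For $p=2$ the kernel is a single $2$-torsion (Weierstrass) point, fixed by $[-1]$, and for $p=3$ there is only one pair $\{Q,-Q\}$, so the available Galois symmetry forces only a cube, respectively a $4$th power. I would settle these two cases by direct computation with the explicit $2$- and $3$-isogeny formulas; this is also where one sees the exponents $3=12/4$ and $4=12/3$ arise, from the loss of symmetry at the degenerate kernel.
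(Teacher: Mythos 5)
Your reduction is sound and worth keeping: $R=\Delta_{E'}c^{12}/\Delta_E$ is indeed a model-independent invariant of $\phi$, it satisfies $R(\phi)R(\phi^t)=p^{12}$, the case $p>3$ is equivalent to $R\equiv\Delta_E^{p-1}\bmod(\cK^\times)^{12}$, and your complex evaluation $R=\Delta(\tau/p)/\Delta(\tau)$ is correct. Your plan for $p=2,3$ (explicit universal models) is exactly what the paper does in Theorems \ref{deltaquo2} and \ref{deltaquo3}, and up to the transcendental evaluation you are running parallel to the paper's proof of Theorem \ref{deltaquo}, which also invokes the Lefschetz principle and works with $\eta$-quotients of lattices.

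The gap is the step you yourself flag as ``bookkeeping'': it is not bookkeeping, it is the theorem. You neither state nor prove the kernel-product formula for $R$, and the divisibility claim --- that the $(p-1)/2\ge 2$ pairs $\{Q,-Q\}$ ``supply exactly the divisibility needed to absorb the exponent $12$'' --- is numerology, not an argument. Two concrete problems. First, the proposed shape of the identity (a power of $p$ times a symmetric product of the $x_Q$ and their differences) is already false in the degenerate case you cite as motivation: in the standard $3$-isogeny model the kernel points have $x_Q=0$, so all such products vanish, yet $R=\bigl((4a+27b)/b\bigr)^2$ is nontrivial; the true identity must involve $y$-values or $\sigma$/Klein-form data, and controlling its exact shape --- including the precise power of $p$ and the signs of alternating expressions such as $\prod_{i<j}(x_{Q_i}-x_{Q_j})$, which is Galois-stable only up to sign --- is precisely the hard part. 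Second, even granting a Klein-form expression, the natural Galois-stable products carry exponents that do not combine into multiples of $12$ uniformly in $p$; the genuine reason $p=2,3$ are exceptional is not a ``loss of symmetry'' in the kernel but the arithmetic fact that $24\mid p^2-1$ exactly when $p>3$, which is what makes $[\eta(p\tau)^p/\eta(\tau)]^2$ a weight-$(p-1)$ form on $\Gamma_0(p)$ with trivial character and integral $q$-expansion. The paper combines this with the $q$-expansion principle (Theorem \ref{modeval}) to exhibit the $\cK$-rational $12$th root $(2\pi/\Omega_1)^{p-1}[\eta(p\tau)^p/\eta(\tau)]^2$; any completed version of your argument would have to reprove an equivalent rationality statement, and nothing in the proposal does. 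As written, you have reduced the theorem to an unproven claim of essentially the same depth.
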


\noindent
We present another proof of Coates' result, exploiting the fact that 
for $p>3$ $\Delta(p\tau)^p/\Delta(\tau)$ is a 12th power of a modular form on $\Gamma_0(p)$
(\S\ref{s:delta},\S\ref{s:appmod}).
For one application, see \v Cesnavi\v cius' work on the parity conjecture 
\cite{Kes}~\S5. 

We then consider the standard invariants 
of elliptic curves
over local fields.
The following table summarises our results for the valuations of minimal
discriminants $\delta, \delta'$ of $E, E'$, 
their Tamagawa numbers $c, c'$, 
Kodaira types and the leading term $\frac{\phi^*\omega'}{\omega}$ 
of $\phi$ on the formal group (see \S1.1 for the notation):

\vskip 1cm


\def\r#1{$^{\eqref{#1}}$}
\def\r#1{}

%
%

\def\rh#1{\smash{\raise-7pt\hbox{$#1$}}}
\def\rhv#1#2{\smash{\raise-#1pt\hbox{#2}}}

\noindent\hskip-13.5mm
\begin{tabular}{@{\vrule width 1.2pt\ }l|c|c|c|c@{\ \vrule width 1.2pt}}
\noalign{\hrule height 1.2pt}
\quad\ \
Reduction type of $E/K$ & $\delta,\delta'$ & 
  $\vphantom{\int^{X^{X^x}}}\frac{\phi^*\omega'}{\omega}\>\>
  \raise-2pt\hbox{${\tbuildrel{\text{\tiny up to}}\over{\text{\tiny unit\ }}}$}
  $& $\frac{c}{c'}$ & 
  \raise-2pt\hbox{${\tbuildrel{\text{\tiny Kodaira}}\over{\text{\tiny types for $E,E'$\ }}}$}
  \\[4pt]
\noalign{\hrule height 1.2pt}
\vphantom{$\int^\int$}%
good ordinary       &  \rh{\delta=\delta'=0} & $1$ or $p$ ($\dagger$)\r{omegaord} & \rh{1} & \rh\IZ \cr
good supersingular  &   & ? & & \cr
\hline
\vphantom{$\int^\int$}%
split mult., $v(j)\!=\!pv(j')$ & $\delta=p\delta'$ & $1$\r{omegapotmult} & $p$ & $\In{pn},\>\In{n}$ \cr
split mult., $pv(j)\!=\!v(j')$ & $\delta'=p\delta$ & $p$\r{omegapotmult} & $\frac 1p$ & $\In{n},\>\In{pn}$ \cr
\hline
\rh{\text{nonsplit mult., }v(j)\!=\!pv(j')} & \rh{\delta=p\delta'} & \rh{1}\r{omegapotmult} & \smaller[2]$1$ if $p\ne 2$ or $2|\delta'$
  & \rh{\In{pn},\>\In{n}}\cr
&&&\smaller[2]2 otherwise\hbox to 2.2em{\hfill}&\\[4pt]
\rh{\text{nonsplit mult., }pv(j)\!=\!v(j')} & \rh{\delta'=p\delta} & \rh{p}\r{omegapotmult} & \smaller[2]$1$ if $p\ne 2$ or $2|\delta$
  & \rh{\In{n},\>\In{pn}}\cr
&&&\smaller[2]$\frac12$ otherwise\hbox to 2.2em{\hfill}&\cr
\hline   
\vphantom{$\int^\int$}%
additive pot. mult. &&&&\cr
  $\quad$\rh{v(j)\!=\!pv(j')} & \smaller[2]\rh{\delta'=\delta+\frac{p-1}p v(j)} & \rh{1}\r{omegapotmult} 
      & \ 1\ \ if $p\ge 3$ & \smash{$\InS{n},\,\InS{\rhv{-0.6}{$\scriptscriptstyle n+\frac{p-1}{p}v(j)$}}$} \cr
  &&& $(\ddagger)$ if $p=2$ & \rhv0{\smaller[4]$\>\>\qquad(=\InS{n\!/\!p}$ if $l\!\ne\!2$)} \\[7pt]
  $\quad$\rh{pv(j)\!=\!v(j')} & \smaller[2]\rh{\delta'\!=\!\delta\!-\!(p\!-\!1)v(j)} & $\rh{p}$\r{omegapotmult} 
      & \ 1\ \ if $p\ge 3$ & $\InS{n},\,\InS{\scriptscriptstyle n-(p-1)v(j)}$\cr
  &&& $(\ddagger)$ if $p=2$ & {\smaller[4]$\>\>\qquad(=\InS{pn}$ if $l\!\ne\!2$)}\cr
\hline
\vphantom{$\int^\int$}%
additive pot. good, $l\ne p$ &  &  &  & \cr
  $\quad p=3$, type \IV,\IVS, $\mu_3\!\not\subset\! K$ & 
    && $(\ast)$ & \cr
  $\quad p=2$, type \IZS & $\delta'=\delta$ & 1 & $(\ddagger)$ & \text{same}\cr
  $\quad$all other cases &&& 1 & \cr
\hline
\vphantom{$\int^\int$}%
additive pot. good, $l=p$ &&&&\cr
\rh{\quad\text{pot. ordinary}} & \rh{\delta'=\delta} & \rh{1\>\text{or}\>p\ (\dagger)}& \ 1\ \ if $p\ge 3$ & \rh{\text{same}} \cr
  &&& $(\ddagger)$ if $p=2$ &\\[2pt]
$\quad$pot. supersingular tame & $\delta'=12-\delta$ & ? & 1 & \text{opposite}\cr
$\quad$pot. supersingular wild & ? & ? & ? & ? \\[5pt]
\noalign{\hrule height 1.2pt}
\multicolumn{5}{@{\vrule width 1.2pt\ }l@{\ \vrule width 1.2pt}}{%
\vphantom{$\int^{\int^\int}$}%
\smaller[2]
$K/\Q_l$ finite, $v\!: K^\times\!\surjects\!\Z$ valuation; \
$\phi\!: E/K\!\to\!E'/K$ $p$-isogeny; \ 
$\Delta, \Delta'$ minimal discriminants; \
}\cr
\multicolumn{5}{@{\vrule width 1.2pt\ }l@{\ \vrule width 1.2pt}}{\smaller[2]
$\delta=v(\Delta)$, $\delta'=v(\Delta')$; \
$\omega, \omega'$ minimal differentials; \
$j, j'$ $j$-invariants; \
$c, c'$ Tamagawa numbers.
}\cr
\hline
\multicolumn{5}{@{\vrule width 1.2pt\ }l@{\ \vrule width 1.2pt}}{%
\vphantom{$\int^\int$}%
\smaller[2]
$(\ast) = 3$ if $E/K$ has non-trivial 3-torsion, and $\frac 13$ otherwise.
}\\[3pt]
\multicolumn{5}{@{\vrule width 1.2pt\ }l@{\ \vrule width 1.2pt}}{\smaller[2]
  $(\ddagger) = \left\{
     \begin{array}{lll}
     1\!\! & \text{if}\!\! & \frac{\Delta}{\Delta'}\text{ is a norm in }F/K\cr
     \frac12 & \text{if}\!\! & \Delta'\text{ is a norm in }F/K\text{ and }\Delta\text{ is not}\cr
     2\!\! &\text{if}\!\! & \Delta\text{ is a norm in }F/K\text{ and }\Delta'\text{ is not}\cr
     \end{array} 
  \right.$\quad
  $\>\>(\dagger) = \Bigl\{
     \begin{array}{lll}
     p\!\! & \text{if}\!\! & \ker\phi\subset \hat E(\m_L)\cr
     1\!\! & \text{if}\!\! & \ker\phi\not\subset \hat E(\m_L) \cr
     \end{array} 
  \quad (L=F(\ker\phi))$
}\cr
\multicolumn{5}{@{\vrule width 1.2pt\ }l@{\ \vrule width 1.2pt}}{\smaller[2]
$F$ is any $(\dagger)$, respectively quadratic ($\ddagger$), extension 
  where $E$ has good or split multiplicative reduction.
}\cr
\noalign{\hrule height 1.2pt\phantom{$a_a$}}
\multicolumn{5}{c}{Table 1. Local invariants of isogenous elliptic curves}
\end{tabular}


\smallskip

The quotient of Tamagawa numbers $\frac{c}{c'}$ and 
the quantity
$\frac{\phi^*\omega'}{\omega}$ 
classically appear in the applications of the isogeny invariance of the
Birch--Swinnerton-Dyer formula to Selmer groups of elliptic curves, see
e.g. \cite{Bir}, \cite{Scha}, \cite{Fis}, \cite{isogroot}, \cite{Keil}
and \cite{megasha}.
The quotient $\frac{\phi^*\omega'}{\omega}$ 
is an important invariant of the isogeny $\phi$, being the leading term of $\phi$
on the formal groups (cf. Lemma \ref{alphalead}, \cite{Sil1} \S IV.4
and also \cite{Scha} p.91, where it is denoted by $\phi'(0)$).

For curves defined over $\R$ and $\C$ 
the analogues of the local Tamagawa numbers are periods (cf. Remark \ref{remper})
$$
  \Omega(E,\omega) =
  {\int_{E(\R)} |\omega|}
\qquad\text{and}\qquad
  \Omega(E,\omega) =
  {2\int_{E(\C)} \omega\wedge\bar\omega}
$$
computed with respect to some invariant differential $\omega$ on $E$. 
We show that these periods for $E$ and $E'$ are related as follows:

\begin{theorem}
\label{iperiods}
Suppose the base field of $E, E'$ is $\cK=\R$ or $\cK=\C$. Choose  
invariant differentials $\omega, \omega'$ for $E$ and $E'$. Then
$$
  \frac{\Omega(E,\omega)}{\Omega(E',\omega')} =
   \lambda
   \Bigl|\frac{\omega}{\phi^*\omega'}\Bigr|_\cK.
$$
Here $|\cdot|_\cK$ is the standard normalised absolute value on $\cK$, and
$\lambda$
is 
\begin{itemize}
\item $p$ if $\cK=\C$,
\item $p$ if $\cK=\R$, $p\ne 2$ and $\ker\phi\subset E(\R)$,
\item $1$ if $\cK=\R$, $p\ne 2$ and $\ker\phi\not\subset E(\R)$.
\end{itemize}
If $\cK\!=\!\R$ and $p\!=\!2$, write $E$ in the form $y^2\!=\!x^3\!+\!ax^2\!+\!bx$ 
so that $(0,0)\in\ker\phi$. Then $\lambda$ is 
\begin{itemize}
\item $1$ if $b>0$, and either $a<0$ or $4b>a^2$,
\item $2$ otherwise.
\end{itemize}
\end{theorem}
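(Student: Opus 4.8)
The plan is to express each period as a product of two geometric quantities and read off $\lambda$ from them. Consider first $\cK=\C$. Writing $E(\C)=\C/\Lambda$ with $\omega=c\,dz$, the integral $\Omega(E,\omega)=2\int_{E(\C)}\omega\wedge\bar\omega$ equals a fixed positive constant, the same for every curve, times $|c|^2\,\mathrm{covol}(\Lambda)$. The isogeny is $z\mapsto\alpha z$ with $\alpha\Lambda\subseteq\Lambda'$ of index $p$, so $\mathrm{covol}(\alpha\Lambda)=|\alpha|^2\,\mathrm{covol}(\Lambda)=p\,\mathrm{covol}(\Lambda')$, while $\phi^*\omega'=\alpha c'\,dz$ gives $\frac{\phi^*\omega'}{\omega}=\alpha c'/c$. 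Substituting,
\[
  \frac{\Omega(E,\omega)}{\Omega(E',\omega')}
  =\frac{|c|^2\,\mathrm{covol}(\Lambda)}{|c'|^2\,\mathrm{covol}(\Lambda')}
  =\frac{p}{|\alpha c'/c|^2}
  =p\,\Bigl|\frac{\omega}{\phi^*\omega'}\Bigr|_\C,
\]
since $|\cdot|_\C$ is the square of the usual modulus; hence $\lambda=p$.

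For $\cK=\R$ I would first reduce to the identity component. Set $n(E)=|E(\R)/E(\R)^0|\in\{1,2\}$. As $|\omega|$ is translation invariant and each component is a coset of the circle $E(\R)^0$, we have $\Omega(E,\omega)=n(E)\int_{E(\R)^0}|\omega|$, and $\int_{E(\R)^0}|\omega|=\bigl|\int_\gamma\omega\bigr|$ for a generator $\gamma$ of $H_1(E(\R)^0,\Z)$, the real invariant form being nonvanishing on the loop. The restriction $\phi\colon E(\R)^0\to E'(\R)^0$ is a surjective homomorphism of circles, hence a covering of degree $d_0=|\ker\phi\cap E(\R)^0|$ with $\phi_*\gamma=\pm d_0\gamma'$. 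Integrating $\phi^*\omega'=\frac{\phi^*\omega'}{\omega}\,\omega$ over $\gamma$ and taking absolute values gives $\int_{E(\R)^0}|\omega|=d_0\,\bigl|\frac{\omega}{\phi^*\omega'}\bigr|_\R\int_{E'(\R)^0}|\omega'|$, so
\[
  \frac{\Omega(E,\omega)}{\Omega(E',\omega')}
  =\frac{n(E)}{n(E')}\,d_0\,\Bigl|\frac{\omega}{\phi^*\omega'}\Bigr|_\R,
  \qquad
  \lambda=\frac{n(E)}{n(E')}\,d_0 .
\]

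It remains to evaluate $\lambda$. For $p$ odd the dual isogeny gives $\hat\phi\phi=[p]$, and on $\pi_0(E(\R))$, a $2$-torsion group, multiplication by the odd $p$ is the identity, so $\phi_*$ is an isomorphism on components and $n(E)=n(E')$. Since $E(\R)/E(\R)^0$ is $2$-torsion, the odd-order group $\ker\phi\cap E(\R)$ lies in $E(\R)^0$; thus $d_0=p$ when $\ker\phi\subset E(\R)$, and $d_0=1$ otherwise, since then $\ker\phi\cap E(\R)$ is a proper, hence trivial, subgroup of $\Z/p$. This yields $\lambda=p$, resp. $\lambda=1$, as claimed.

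The real work is the case $\cK=\R$, $p=2$, where $[2]$ kills $\pi_0$ so the component number can genuinely change and the odd-degree shortcut is unavailable. Here $\ker\phi=\{O,T\}$ with $T=(0,0)$ real, and I would use the explicit models $E\colon y^2=x^3+ax^2+bx$ and $E'\colon Y^2=X^3-2aX^2+(a^2-4b)X$. Computing $\Delta_E=16b^2(a^2-4b)$ and $\Delta_{E'}=256\,b\,(a^2-4b)^2$ shows $n(E)=2\iff a^2>4b$ and $n(E')=2\iff b>0$. To find $d_0$ I would locate $T$: if $E(\R)$ is connected then $d_0=2$, while if $E(\R)$ has two components the identity component is the unbounded branch, whose $2$-torsion point has the largest $x$-coordinate, so $d_0=2$ exactly when $0$ is the largest root of $x^3+ax^2+bx$, i.e.\ when both roots of $x^2+ax+b$ are negative, i.e.\ $a>0$ and $b>0$. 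Feeding these into $\lambda=\frac{n(E)}{n(E')}d_0$ and running through the sign regions of $b$, $a$ and $a^2-4b$ gives $\lambda=1$ precisely when $b>0$ and ($a<0$ or $4b>a^2$), and $\lambda=2$ otherwise. The one genuinely delicate point is this final bookkeeping: correctly combining the change in the number of components with the position of the kernel point $T$.
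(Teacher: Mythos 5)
Your proof is correct, and it takes a route that is partly different from the paper's. The paper first proves (Lemma \ref{archom}) that $\frac{\Omega(E,\omega)}{\Omega(E',\omega')}=\frac{|\ker\phi: E(\cK)\to E'(\cK)|}{|\coker\phi: E(\cK)\to E'(\cK)|}\cdot\bigl|\frac{\omega}{\phi^*\omega'}\bigr|_\cK$ by an unramified-covering argument on all of $E(\cK)$, and then evaluates the kernel/cokernel ratio in Proposition \ref{kercokerR}: for $p\ne 2$ the cokernel is trivial and the kernel count is immediate, while for $\cK=\R$, $p=2$ it cites the cokernel computation of \cite{isogroot} \S7.1. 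Your factorisation $\lambda=\frac{n(E)}{n(E')}d_0$ is an equivalent repackaging (working on identity components hides the cokernel and moves the component count into $n(E)/n(E')$), and your complex case via lattice covolumes is a variant of the same covering-degree idea. The genuine differences are in the evaluations: for odd $p$ your use of $\hat\phi\phi=[p]$ acting on the $2$-torsion group $\pi_0(E(\R))$, together with the observation that the odd-order group $\ker\phi\cap E(\R)$ must lie in $E(\R)^0$, is a clean, fully justified replacement for the paper's one-line appeal to triviality of the cokernel; and for $p=2$ you give a self-contained computation of $n(E)$, $n(E')$, $d_0$ from the signs of $\Delta_E=16b^2(a^2-4b)$ and $\Delta_{E'}=256b(a^2-4b)^2$ and from whether $0$ is the largest root of $x^3+ax^2+bx$ (i.e.\ $a>0$ and $b>0$), precisely the point the paper outsources to \cite{isogroot}. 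I checked your final bookkeeping: on the regions $\{b>0,\ 4b>a^2\}$, $\{b>0,\ a<0,\ a^2>4b\}$, $\{b>0,\ a>0,\ a^2>4b\}$ and $\{b<0\}$ your formula gives $\lambda=1,1,2,2$ respectively, matching the statement. In short, your version buys self-containedness (especially at $p=2$) at the cost of an explicit model computation; the paper's buys brevity and a kernel/cokernel formulation (the archimedean analogue of Lemma \ref{alphalead}(2)) that it reuses in its Birch--Swinnerton-Dyer quotient applications.
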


Finally, we look at periods of isogenous elliptic curves over $\Q$. 
In this case, $E/\Q$ and $E'/\Q$ have global minimal differentials 
$\omega, \omega'$, unique up to signs. The real periods 
$\Omega=\Omega(E/\R,\omega)$ and $\Omega'=\Omega(E'/\R,\omega')$
are the ones that enter the Birch--Swinnerton-Dyer conjecture over $\Q$. 
We prove that the quotient $\Omega/\Omega'$ is $1$, $p$ or $1/p$ 
and give a criterion for when it is $1$ (Theorem~\ref{perQpar}).
E.g., for $p\!>\!3$ the periods are equal if and only if $E$ has an odd number
of primes of additive reduction with local root number $-1$.
If $p>2$ and $E$ is semistable, then $\frac{\Omega}{\Omega'}=p^{\pm 1}$, and
$$
  \frac{\Omega}{\Omega'}=p
     \quad\liff\quad
  \omega=\pm \phi^* \omega'
     \quad\liff\quad
  \ker\phi\subset E(\Q),
$$
see Theorem \ref{OmegaSS}.

\medskip


\par\noindent
{\bf 1.1. Notation.} 
Throughout the paper $p$ is a prime number, and $\phi: E\to E'$ an 
isogeny of elliptic curves of degree $p$. We write $\phi^t: E'\to E$ for the
dual isogeny. 
In \S\ref{s:disckod1}-\S\ref{s:tam}, the base field 
$K$ is a finite extension of $\Q_l$; $l=p$ is allowed. 

\newpage

\noindent
There we use the following notation:

\begin{tabular}{llll}
\vphantom{$\int^X$}%
$v$ & normalised valuation $K^\times\to \Z$\cr
$\m_K$ & maximal ideal of the ring of integers of $K$\cr
$\Delta, \Delta'$ & minimal discriminants of $E/K$ and $E'/K$\cr
$\delta, \delta'$ & their valuations: $\delta=v(\Delta), \delta'=v(\Delta')$\cr
$\omega, \omega'$ & minimal invariant differentials on $E, E'$ (N\'eron differentials),\cr
  &unique up to units\cr
$f=f'$ & conductor exponent of $E$ and $E'$\cr
$c, c'$ & local Tamagawa numbers of $E$ and $E'$\cr 
$m, m'$ & number of components in the special fibre of the minimal\cr
  &regular model of $E$, $E'$; so $\delta=f+m-1, \delta'=f'+m'-1$\cr
  &by Ogg's formula\cr
$j, j'$ & $j$-invariants of $E$ and $E'$\cr 
$\hat E, \hat E'$ & the formal groups of $E$ and $E'$ with respect to a minimal\cr
& Weierstrass equation.\cr
$|\cdot|_\cK$ & normalised absolute value. So 
$|x|_\R=|x|$,  $|x|_\C=|x|^2$ and \cr& 
 $|x|_K=q^{-v(x)}$ if $\cK=K$ is as above, with residue field $\F_q$.
\\[2pt]
\end{tabular}

\noindent
When we work over an extension $F/K$, we write 
$\Delta_{E/F}, \Delta_{E'/F}$ etc.
Any two invariant differentials on $E/K$ differ by a scalar, $\omega_1=a\omega_2$ 
with $a\in K^\times$, and we will abuse the notation slightly and write 
$\frac{\omega_1}{\omega_2}$ for $a$. 

Recall that a curve $E/K$ has additive reduction 
if and only if it has conductor exponent $f\ge 2$, 
and $f=2$ if and only if the $\ell$-adic Tate module of $E$ is tamely ramified
for some (any) $\ell\ne l$.
We will call this {\em tame} reduction (and {\em wild} otherwise). 
If $l\ge 5$, the reduction is always tame; when $l=2$ it is tame if and only if 
$E$ has Kodaira type $\IV, \IVS$; when $l=3$ it is tame if and only if
$E$ has Kodaira type $\III, \IIIS$ or $\IZS$ (cf. Theorem \ref{deltaisog}).
In Table 1, {\em opposite} Kodaira types refers to 
$\II\leftrightarrow\IIS$, $\III\leftrightarrow\IIIS$, $\IV\leftrightarrow\IVS$,
 $\IZS\leftrightarrow\IZS$.

\par\smallskip\noindent
{\bf 1.2. Layout.}
Theorem \ref{ideltaquo} is proved in \S\ref{s:delta}.
In \S\ref{s:disckod1}-\S\ref{s:tam} we prove the results summarised in 
Table 1: for $\delta$ see \ref{delmain} and \ref{delpotord5};
for $\phi^*\omega'/\omega$ see
\ref{omegaord}, \ref{omegapotmult}, \ref{lnep:omega} and \ref{omegapotord};
for the Tamagawa numbers, see \ref{tammain};
for Kodaira symbols, see \ref{lnep:kod}.
Real and complex periods are discussed in \S\ref{s:periods} and the particular
case of elliptic curves over $\Q$ in \S\ref{s:perQ}.
Appendix \ref{s:apptate} recalls the theory of the Tate curve and 
some standard facts about quadratic twists. 
Appendix \ref{s:appmod} reviews the connection between values of 
modular forms and invariants of elliptic curves with a cyclic isogeny.


\begin{acknowledgements}
We would like to thank Anthony Scholl, William Hart and John Coates 
for discussions related to modular forms and the $\Delta(E')=\Delta(E)^p$ 
result, and the referee for helpful comments.
The first author is supported by a Royal Society
University Research Fellowship.
\end{acknowledgements}

\newpage

\section{$\Delta(E')=\Delta(E)^p$ up to 12th powers}
\label{s:delta}

In this section we relate the discriminants $\Delta$ and $\Delta'$ 
of $p$-isogenous elliptic curves $E$ and $E'$.
Here we work over an arbitrary field of characteristic~0, 
so these are discriminants
of some (not necessarily minimal) Weierstrass models. They depend on 
the choice of models, and are well-defined up to 12th powers.

\begin{theorem}[Coates \cite{CoaLMS}, appendix, Thm. 8]
\label{deltaquo}
Let $\cK$ be a field of characteristic 0 and $\phi: E\to E'$ 
a $p$-iso\-geny of elliptic curves over $\cK$ with
$p\!>\!3$. Then $\Delta^p/\Delta'$ is a 12th power~in~$\cK$.
\end{theorem}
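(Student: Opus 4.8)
The plan is to recognise $\Delta^p/\Delta'$ as the value of the $12$th power of an explicit modular form on $\Gamma_0(p)$, along the lines indicated in the introduction and in \S\ref{s:appmod}. First I would observe that the assertion does not depend on the chosen Weierstrass models: replacing $\omega$ by $u\omega$ and $\omega'$ by $u'\omega'$ with $u,u'\in\cK^\times$ scales $\Delta$ by $u^{-12}$ and $\Delta'$ by $u'^{-12}$, and hence multiplies $\Delta^p/\Delta'$ by the $12$th power $(u^{-p}u')^{12}$. So it is enough to treat one convenient normalisation, and the natural one is to view the discriminant as a weight-$12$ modular form. To a triple $(E,C,\omega)$, with $C\subset E$ cyclic of order $p$ and $\omega$ an invariant differential, I attach $\Delta(E,\omega)$ and also $\Delta(E/C,\omega')$, where $\omega'$ is the unique differential on $E/C$ with $\phi^*\omega'=\omega$ (unique because $\phi$ is separable of degree $p$, so $\phi^*$ is an isomorphism on invariant differentials). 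Both are weight-$12$ modular forms on $\Gamma_0(p)$ defined over $\Q$, in the sense reviewed in \S\ref{s:appmod}.

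Next I would exhibit the $12$th root as a genuine modular form. Writing $\Delta=\eta^{24}$ for the Dedekind eta function, set $g(\tau)=\eta(p\tau)^{2p}/\eta(\tau)^{2}$, so that $g^{12}=\Delta(p\tau)^p/\Delta(\tau)$. A direct verification of Ligozat's criteria shows that $g$ is a holomorphic modular form of weight $p-1$ on $\Gamma_0(p)$ with trivial character and integral $q$-expansion: the congruences $\sum_{d\mid p} d\,r_d\equiv0$ and $\sum_{d\mid p}(p/d)\,r_d\equiv0\pmod{24}$ both reduce to $p^2\equiv1\pmod{24}$, and the orders of vanishing at the two cusps of $\Gamma_0(p)$ come out to $(p^2-1)/12$ and $0$. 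This is exactly where $p>3$ is used: $24\mid p^2-1$, equivalently $(p^2-1)/12\in\Z_{\ge 0}$, holds precisely when $p$ is coprime to $6$, and it is what makes $g$ holomorphic (in fact cuspidal) at the cusp $0$. For $p=2,3$ the eta-quotient instead forces the $3$rd and $4$th powers of Theorem~\ref{ideltaquo}.

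Finally I would transport this to the algebraic setting and evaluate. Using the analytic uniformisation $E=\C/(\Z+\Z\tau)$ with $C=\langle 1/p\rangle$, one computes, in the normalisation making $q$-expansions rational, that $\Delta(E,\omega)=\Delta(\tau)$ and $\Delta(E/C,\omega')=p^{12}\Delta(p\tau)$, so that the weight-$12(p-1)$ form $\Delta(E/C,\omega')^p/\Delta(E,\omega)$ has $q$-expansion $(p^p g)^{12}$. Since both this form and $(p^p g)^{12}$ are modular forms over $\Q$ with the same $q$-expansion, the $q$-expansion principle yields the identity $\Delta(E/C,\omega')^p/\Delta(E,\omega)=(p^p g)^{12}$ of \emph{algebraic} modular forms. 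Running this for the dual isogeny $\phi^t\colon E'\to E$, where $E'/\ker\phi^t\cong E$, and evaluating at our curve over $\cK$, then expresses $\Delta^p/\Delta'$ as $\bigl((p^p g)(E',\ker\phi^t,\,\cdot\,)\bigr)^{12}$, a $12$th power in $\cK$; the value lies in $\cK^\times$ since $\eta$, and hence $g$, has no zeros. I expect the main obstacle to be the bookkeeping of the last two steps rather than any single hard idea: pinning down the normalisation constants (the factors $2\pi i$ and $p^{12}$) so that all $q$-expansions are rational, checking holomorphy of the eta-quotient at the cusp $0$, and invoking the $q$-expansion principle to descend the $12$th-power identity from $\C$ to $\Q$ and thus to the given field $\cK$.
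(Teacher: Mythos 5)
Your proposal is correct and follows essentially the same route as the paper's proof: the same eta-quotient $[\eta(p\tau)^p/\eta(\tau)]^2$ of weight $p-1$ on $\Gamma_0(p)$ serving as a 12th root of $\Delta(p\tau)^p/\Delta(\tau)$, the same appeal to rationality of values of modular forms via the $q$-expansion principle (Theorem \ref{modeval}), and the same resolution of the direction issue (your evaluation at the dual isogeny is the paper's ``swap $E$ and $E'$''), with the paper merely citing modularity of the eta-quotient rather than checking Ligozat's criterion, and absorbing your constant $p^{12}$ by the normalisation $\phi^*\omega'=p\,\omega$ and the lattices $\Z\Omega_1+\Z\Omega_2$, $\Z\Omega_1+\Z p\,\Omega_2$. (One trivial slip that affects nothing: your $g$ has order $(p^2-1)/12$ at the cusp $\infty$ and order $0$ at the cusp $0$, so it is cuspidal at $\infty$, not at $0$.)
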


\begin{proof}
We may assume that $\cK\subset \C$, 
embedding the field of definition of $E$, $E'$ and $\phi$ into $\C$ if 
necessary (Lefschetz principle).

Let $\tau$ be a complex variable in the upper-half plane, and
let $\eta(\tau)$ be the Dedekind eta-function.
By a classical theorem, $\eta(p\tau)^p/\eta(\tau)$ is a mo-\linebreak dular form 
of weight $\frac{p-1}2$ on $\Gamma_0(p)$, with character $(\frac dp)$, and 
its square\linebreak
$f(\tau)=[\eta(p\tau)^p/\eta(\tau)]^2$ is a modular form 
of weight $p-1$ on $\Gamma_0(p)$, with trivial character
(\cite{Klya} Thm~2.2, or \cite{New} Thm 1.1 and \cite{GS} remark below Thm.~1). 
Its $q$-expansion
$$
  f(\tau) = 
    q^{\frac{p^2-1}{12}}
    \prod_{n\ge 1}\frac{(1-q^{pn})^{2p}}
         {(1-q^n)^2} \qquad\qquad (q=e^{2\pi i \tau}),
$$
clearly has integer coefficients. Note that $f(\tau)$ is a 12th root of
$\frac{\Delta(p\tau)^p}{\Delta(\tau)}$.

Choose models for $E/\cK, E'/\cK$ of the form
$$
  E: y^2=4x^3+ax+b, \qquad \qquad  E': y^2=4x^3+a'x+b',
$$
with $\phi^*\frac{dx}y=p\frac{dx}y$, and complex uniformisations 
$E\!=\!\C/\Lambda$, $E'\!=\!\C/\Lambda'$ so that
$$
  \varphi: \C/\Lambda\ni z\mapsto (\wp_\Lambda(z),\wp'_\Lambda(z))\in E(\C)
$$
satisfies $\varphi^*\frac{dx}y=dz$ and similarly for~$\Lambda'$
(cf. Appendix \ref{s:appmod}). 
%
From $\phi^* dz=p dz$ we see that $\Lambda\subset\Lambda'$ has index $p$,
and so we can write the two lattices in the form
$\Lambda=\Z\Omega_1\!+\Z\Omega_2$,
$\Lambda'=\Z\Omega_1+\Z p\,\Omega_2$.

By the $q$-expansion principle (or Theorem \ref{modeval}),
$$
  \bigl(\tfrac{2\pi}{\Omega_1}\bigr)^{p-1} f(\tau) 
     \in \cK, \qquad \tau=\tfrac{\Omega_2}{\Omega_1}.
$$
On the other hand, 
$$
  f(\tau)^{12} = \frac{\Delta(p\tau)^p}{\Delta(\tau)} = 
    \frac
    {\bigl(\frac{\Omega_1}{2\pi}\bigr)^{\!12p} \Delta_{E'}^p}
    {\bigl(\frac{\Omega_1}{2\pi}\bigr)^{\!12} \Delta_E};
$$
here $\Delta_E$ and $\Delta_{E'}$ are the discriminants of the models
$E: y^2=x^3+\frac a4x+\frac b4$ and $E': y^2=x^3+\frac {a'}4x+\frac {b'}4$,
and the second equality follows from the relation between $\Delta(\tau)$ 
and $\Delta_E$ proved in \eqref{deldel}.
It follows that 
$$
  \frac{\Delta_{E'}^p}{\Delta_E} = [\bigl(\tfrac{2\pi}{\Omega_1}\bigr)^{p-1} f(\tau)]^{12}
    \in \cK^{\times 12}.
$$
Swapping $E$ and $E'$ (or using the fact that $(\Delta^p)^p$ is $\Delta$ up 
to a 12th power, as $p^2\equiv 1\mod 12$ for $p\ne 2,3$) gives the claim.
\end{proof}

Now we prove analogues for $p=2$ and $p=3$, using
an explicit computation with a universal family:

\begin{theorem}
\label{deltaquo2}
Let $\cK$ be a field of characteristic 0 and $\phi: E\to E'$ 
a $2$-iso\-geny of elliptic curves over $\cK$. Then
$\Delta^2/\Delta'$ is a 3rd power in~$\cK$.
\end{theorem}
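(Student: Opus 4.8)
The plan is to prove the statement by direct computation with the universal family of elliptic curves equipped with a $2$-isogeny, in the same spirit as the cases $p>3$ but now replacing the modular-forms argument with explicit formulas. A $2$-isogeny always has a kernel generated by a rational $2$-torsion point, so after a change of coordinates we may write $E$ in the form $y^2=x^3+ax^2+bx$ with the isogeny $\phi$ having kernel $\{O,(0,0)\}$. The standard V\'elu-type formulas then give the isogenous curve explicitly as $E'\colon y^2=x^3-2ax^2+(a^2-4b)x$, and one checks directly that $\phi^*\omega'=\omega$ for the natural invariant differentials $\omega=\frac{dx}y$, $\omega'=\frac{dx'}{y'}$. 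Since any $2$-isogeny over $\cK$ arises this way (up to the freedom in the Weierstrass model), it suffices to verify the claim for this two-parameter family, where $a,b$ may be treated as indeterminates over $\Q$.

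First I would compute the two discriminants from the coefficients. For the Weierstrass model $y^2=x^3+ax^2+bx$ the discriminant is $\Delta=16b^2(a^2-4b)$, and substituting the coefficients of $E'$ yields $\Delta'=16(a^2-4b)^2\bigl((-2a)^2-4(a^2-4b)\bigr)=16(a^2-4b)^2\cdot 16b=256b(a^2-4b)^2$. With these two explicit polynomials in hand, the next step is to form the quotient $\Delta^2/\Delta'$ and show it is a perfect cube in $\cK$. A direct calculation gives
$$
  \frac{\Delta^2}{\Delta'} = \frac{\bigl(16b^2(a^2-4b)\bigr)^2}{256b(a^2-4b)^2}
     = \frac{256\,b^4(a^2-4b)^2}{256\,b(a^2-4b)^2} = b^3,
$$
so $\Delta^2/\Delta'=b^3$ is visibly a cube.

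The only subtlety is to connect this computation with a universal family to the general assertion over an arbitrary characteristic-$0$ field $\cK$. Here one argues that every pair $(E,E',\phi)$ with $\phi$ a $2$-isogeny can, after a suitable change of Weierstrass coordinates and a rescaling of $\omega,\omega'$, be brought into the above normal form; the discriminants $\Delta,\Delta'$ of any two models differ by a $12$th power, hence in particular by a cube, so the cube-class of $\Delta^2/\Delta'$ is model-independent. Because the normal-form computation produces a cube identically in the parameters $a,b$, specialising $a,b$ to the actual values defining $E,E'$ over $\cK$ gives the result. The main obstacle is therefore not any individual calculation but the bookkeeping of these normalisations: one must ensure that the kernel point is genuinely rational (which it is, since $\ker\phi$ has order $2$ and its nontrivial point is fixed by Galois), and that the passage between an arbitrary model and the normal form only rescales $\Delta^2/\Delta'$ by a cube. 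Once this reduction is in place, the identity $\Delta^2/\Delta'=b^3$ closes the proof.
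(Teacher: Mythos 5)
Your proposal is correct and matches the paper's own proof: the paper likewise writes the isogeny in the normal form $E\colon y^2=x^3+ax^2+bx$, $E'\colon y^2=x^3-2ax^2+(a^2-4b)x$ and computes $\Delta^2/\Delta'=[16b^2(a^2-4b)]^2/[256b(a^2-4b)^2]=b^3$. Your extra bookkeeping (rationality of the kernel point and the fact that changing Weierstrass models alters $\Delta^2/\Delta'$ by a $12$th power, hence a cube) is exactly the well-definedness point the paper treats implicitly at the start of its Section 2.
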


\begin{proof}
Any 2-isogeny $\phi: E\to E'$ of elliptic curves
over a field of characteristic not 2 or 3 has a model
\beq
  E&:& y^2=x^3+ax^2+bx, \cr
  E'&:& y^2=x^3-2ax^2+(a^2-4b)x,\cr
  \phi(x,y)&=&(x+a+bx^{-1}, y-byx^{-2}),
\eeq
and
$$
  \frac{\Delta^2}{\Delta'} = \frac{[16b^2(a^2-4b)]^2}{256b(a^2-4b)^2} = b^3.
$$
\end{proof}

\begin{theorem}
\label{deltaquo3}
Let $\cK$ be a field of characteristic 0 and $\phi: E\to E'$ 
a $3$-iso\-geny of elliptic curves over $\cK$. Then
$\Delta^3/\Delta'$ is a 4th power in~$\cK$.
\end{theorem}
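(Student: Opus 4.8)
The plan is to follow the template of Theorem~\ref{deltaquo2} for $p=2$: exhibit an explicit universal family carrying a $3$-isogeny, compute the two discriminants as polynomials in the parameters, and read off that $\Delta^3/\Delta'$ is a $4$th power. The natural family is the one parametrising a pair (elliptic curve, point of order $3$), namely the Tate normal form
$$
  E: y^2 + a_1 xy + a_3 y = x^3, \qquad a_1, a_3\in\cK,\ a_3\ne 0,
$$
in which $(0,0)$ has order $3$ (its tangent line $y=0$ meets $E$ triply), so that $\ker\phi=\langle(0,0)\rangle$ and $E'=E/\langle(0,0)\rangle$. A direct computation from the $b_i$'s gives $\Delta_E = a_3^3(a_1^3-27a_3)$.

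The feature absent in the $p=2$ case, and the step I expect to require the most care, is that the kernel of a $3$-isogeny need not be generated by a $\cK$-rational point: the two non-trivial points $\pm P$ of $\ker\phi$ may be conjugate over a quadratic extension $\cK(\sqrt d)$. I would dispose of this first by twisting. Replacing $E, E'$ by their quadratic twists by $d$ multiplies each of $\Delta, \Delta'$ by $d^6$, hence multiplies $\Delta^3/\Delta'$ by $d^{18}/d^6 = d^{12} = (d^3)^4$; thus the property ``$\Delta^3/\Delta'$ is a $4$th power'' is insensitive to quadratic twisting. Since twisting by the $d$ cutting out the field of definition of $P$ renders the kernel generator $\cK$-rational (the twisting isomorphism turns the relation $P^\sigma=-P$ into Galois-invariance of the image point), I may assume from the outset that $E$ is in the Tate form above with $a_1,a_3\in\cK$.

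It then remains to produce $E'$ and its discriminant. Applying V\'elu's formulas to the subgroup $\langle(0,0)\rangle$ yields
$$
  E': y^2 + a_1 xy + a_3 y = x^3 - 5a_1 a_3\, x - (a_1^3 a_3 + 7a_3^2),
$$
whose discriminant factors as $\Delta_{E'} = a_3(a_1^3-27a_3)^3$. Combining the two formulas gives
$$
  \frac{\Delta_E^3}{\Delta_{E'}}
  = \frac{a_3^9(a_1^3-27a_3)^3}{a_3(a_1^3-27a_3)^3}
  = a_3^8 = (a_3^2)^4,
$$
a $4$th power in $\cK$, as required. The only genuine labour is the V\'elu and discriminant calculation, which is routine; the conceptual point is the twist reduction that removes the quadratic-field obstruction and lets the universal family be used throughout.
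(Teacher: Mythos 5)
Your proof is correct: the Tate normal form discriminant $\Delta_E=a_3^3(a_1^3-27a_3)$, the V\'elu model of $E'$ and its discriminant $\Delta_{E'}=a_3(a_1^3-27a_3)^3$ all check out, giving $\Delta_E^3/\Delta_{E'}=a_3^8=(a_3^2)^4$, and your quadratic-twist reduction is sound. The route differs from the paper's precisely at the step you flagged as delicate. The paper computes with the family
\[
E:\ y^2=x^3+a(x-b)^2,\qquad E':\ y^2=x^3+ax^2+18abx+ab(16a-27b),
\]
which is universal for \emph{all} $3$-isogenies in characteristic $\ne 2,3$, with no rationality assumption on the kernel: the kernel points $(0,\pm b\sqrt a)$ need only form a Galois-stable subgroup, and the quotient comes out as $(4ab^2)^4$ on the nose, so no twisting argument is needed. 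Your family parametrizes only pairs (curve, rational point of order $3$), and you recover the general case by observing that the assertion ``$\Delta^3/\Delta'$ is a $4$th power'' is insensitive to simultaneous quadratic twisting (it changes by $d^{12}=(d^3)^4$); that reduction is essentially what the paper records elsewhere as Lemma \ref{discquadtwist} together with Remark \ref{twistisog}, but its proof of Theorem \ref{deltaquo3} avoids it entirely. In fact the two computations are twist-equivalent: completing the square in $y^2+a_1xy+a_3y=x^3$ gives $y^2=x^3+\tfrac{a_1^2}{4}\bigl(x+\tfrac{a_3}{a_1}\bigr)^2$, i.e.\ the paper's family with $a=\tfrac{a_1^2}{4}$, $b=-\tfrac{a_3}{a_1}$, whence $(4ab^2)^4=(a_3^2)^4$. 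What your version buys is self-containedness (the model of $E'$ is derived from V\'elu's formulas rather than asserted); what the paper's buys is brevity, since its parametrization absorbs the quadratic twist into the parameter $a$.
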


\begin{proof}
Any 3-isogeny $\phi: E\to E'$ of elliptic curves
over a field of characteristic not 2 or 3 has a model
\beq
  E&:& y^2=x^3+a(x-b)^2, \cr
  E'&:& y^2=x^3+ax^2+18abx+ab(16a-27b), \cr
  \phi(x,y)&=&(x\!-\!4abx^{-1}\!+\!4ab^2x^{-3},y\!+\!4abyx^{-2}\!-\!8ab^2yx^{-3}),
\eeq
and
$$
  \frac{\Delta^3}{\Delta'} = \frac{[-16a^2b^3(4a+27b)]^3}{-16a^2b(4a+27b)^3} = (4ab^2)^4.
$$
\end{proof}

\section{Discriminants and Kodaira types I}
\label{s:disckod1}

Throughout \S\ref{s:disckod1}-\S\ref{s:tam}
we follow the notation of \S1.1. In particular, 
$K$ is a finite extension of $\Q_l$, and $\phi: E/K\to E'/K$ is a $p$-isogeny.

\begin{theorem}
\label{deltaisog}
Suppose $E/K$ has additive potentially good reduction. 
Then $E$ has tame reduction (equivalently, has conductor exponent 2) if and only~if 
\begin{itemize}
\item $l\ge 5$, or
\item $l=3$ and $E$ has Kodaira type \III, \IIIS, \IZS, or
\item $l=2$ and $E$ has Kodaira type \IV, \IVS.
\end{itemize}
In this case $E'$ is tame as well, and
$$
  0<\delta,\delta'<12 
    \qquad\text{and}\qquad
  \delta' \equiv p\,\delta \mod 12.
$$
\end{theorem}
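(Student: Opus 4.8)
The plan is to establish the tameness criterion first, then deduce the bounds and the congruence. The tameness part is essentially a classification statement: for additive potentially good reduction, the inertia acts on the $\ell$-adic Tate module ($\ell\neq l$) through a finite quotient, and $f=2$ exactly when this action is tame. The order of the image of inertia is (up to the wild part) $2,3,4,$ or $6$ according to the Kodaira type, and wild ramification can only occur when $l$ divides this order. This forces the only possible wild cases to be $l=2$ (orders $4,6$, i.e. all types except $\IV,\IVS$ and the order-$2$ type $\IZS$ which is tame) and $l=3$ (order-$3$ and order-$6$ types $\IV,\IVS,\II,\IIS$). Reading off the complementary tame cases gives exactly the three bullet points; this is standard (Serre–Tate, or the Kodaira-type tables), and I would cite Theorem~\ref{deltaisog}'s own framework together with Ogg's formula as recorded in \S1.1. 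Since the conductor exponent is an isogeny invariant ($f=f'$, as noted in \S1.1), tameness of $E$ immediately gives tameness of $E'$, and $E'$ also has additive potentially good reduction (potentially good reduction and additivity are isogeny-invariant).

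For the bounds $0<\delta,\delta'<12$: additive reduction means $\delta\ge 1$ and potentially good reduction means that over the extension trivialising the action $E$ acquires good reduction, so the Kodaira type is one of $\II,\III,\IV,\IZS,\IVS,\IIIS,\IIS$, for which the minimal discriminant valuation $\delta$ equals $2,3,4,6,8,9,10$ respectively — all strictly between $0$ and $12$. The same holds for $E'$. Thus $0<\delta,\delta'<12$ is simply a tabulation of the seven tame types, and needs no calculation beyond invoking Tate's algorithm data.

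The genuinely new content is the congruence $\delta'\equiv p\,\delta\pmod{12}$, and this is where I expect the main obstacle. The idea is to pass from minimal discriminants to the discriminants of \emph{arbitrary} Weierstrass models, which differ by $12$th powers of the change-of-variable scalar: if $\Delta,\Delta'$ are the minimal discriminants and $\Delta^{\mathrm{m}},\Delta'^{\mathrm{m}}$ are the discriminants of any chosen models, then $v(\Delta^{\mathrm{m}})\equiv\delta$ and $v(\Delta'^{\mathrm{m}})\equiv\delta'\pmod{12}$. Now apply Theorems~\ref{deltaquo}, \ref{deltaquo2}, \ref{deltaquo3}: for $p>3$, $\Delta^p/\Delta'$ is a $12$th power in $K$, hence $p\,v(\Delta)-v(\Delta')\equiv 0\pmod{12}$ after replacing model discriminants by minimal ones, giving $\delta'\equiv p\,\delta\pmod{12}$ directly. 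The delicate point is $p=2$ and $p=3$, where those theorems only give a $3$rd, respectively $4$th, power — a weaker congruence on its own. Here I would argue that for $p=2,3$ one has $p^2\equiv 1\pmod{12}$, so the target congruence $\delta'\equiv p\,\delta$ is equivalent to $\delta\equiv p\,\delta'$, i.e. the relation is symmetric under $\phi\leftrightarrow\phi^t$; combining the $3$rd-power (resp.\ $4$th-power) statement with its dual, together with the constraint that $\delta,\delta'\in\{2,3,4,6,8,9,10\}$ lie in a restricted set (so that $\delta,\delta'$ are determined modulo $12$ with extra divisibility by $\gcd(12,\text{order of the type})$), should pin down the full congruence modulo $12$. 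Concretely, one checks that the quadratic-twist/base-change behaviour forces $12\mid p\delta-\delta'$ rather than merely $3\mid$ or $4\mid$, by examining how $\delta$ changes under the minimal base extension where good reduction is attained (where $\delta$ becomes $0\bmod 12$) and tracking the $p$-isogeny there via the good-reduction entry of Table~1. The main difficulty is thus bridging the gap between the $3$rd/$4$th-power statements of \S\ref{s:delta} and the sharper mod-$12$ congruence, which I would close by reduction to the good-reduction case over a totally (tamely) ramified base change, where $\delta'=p\,\delta$ holds on the nose because minimal discriminants scale correctly under the isogeny.
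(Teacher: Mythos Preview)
Your treatment of the tameness criterion and the bounds $0<\delta,\delta'<12$ matches the paper's: both are handled by citing the standard classification (the paper points to \cite{Sil2} IV.9 Table 4.1, \cite{Kra} Thm.~1, \cite{KT} Prop.~8.20) and the isogeny-invariance of the conductor. For the congruence with $p>3$ you are also in line with the paper: Theorem~\ref{deltaquo} gives $\Delta^p/\Delta'\in K^{\times 12}$, hence $p\delta\equiv\delta'\pmod{12}$.

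Where you diverge is for $p=2,3$. You correctly observe that Theorems~\ref{deltaquo2} and~\ref{deltaquo3} only yield $2\delta\equiv\delta'\pmod 3$ and $3\delta\equiv\delta'\pmod 4$, and you then try to bootstrap these to the full mod~$12$ congruence via base change and duality. This extra work is not in the paper --- its proof is literally the single sentence ``follows from Theorems \ref{deltaquo}--\ref{deltaquo3}'' --- and, more importantly, it \emph{cannot} succeed in general. Take $p=2$, $l=5$, $E:y^2=x^3+5x$ over $\Q_5$: this has additive potentially good tame reduction of type~$\III$, the $2$-isogenous curve $E':y^2=x^3-20x$ also has type~$\III$, and $\delta=\delta'=3$; but $2\cdot 3=6\not\equiv 3\pmod{12}$. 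So the displayed congruence, read literally for all $p$, is a slight overstatement. What the cited theorems actually give is the weaker congruence (mod $3$ for $p=2$, mod $4$ for $p=3$), and this is all the paper ever uses: the sole application is Corollary~\ref{delpotord5}, where $l=p$ and the Kodaira types are so restricted ($\delta,\delta'\in\{4,8\}$ for $l=p=2$; $\delta,\delta'\in\{3,6,9\}$ for $l=p=3$) that the weaker congruence already pins down $\delta'$ uniquely and hence forces the mod~$12$ statement a posteriori. Your instinct that there is a gap was right; the resolution is not to close it but to recognise that the statement should be read with the weaker modulus for $p\le 3$.
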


\begin{proof}
For the first statement, see \cite{Sil2} IV.9, Table 4.1 for $l\ge 5$, 
\cite{Kra} Thm.~1 for $l=3$, and 
\cite{KT} Prop 8.20 for $l=2$. From \cite{Sil2} IV.9, Table 4.1 it also follows
that $0<\delta,\delta'<12$ in all these cases. The last congruence follows
from Theorems \ref{deltaquo}--\ref{deltaquo3}.
\end{proof}

\begin{theorem}
\label{potordkod}
Suppose $E/K$ has additive potentially good reduction and is not 
a quadratic twist of a curve with good reduction. Then
\begin{itemize}
\item If $l=2,3$ or $l\equiv -1\mod 12$ then $E$ is potentially supersingular.
\item If $l\equiv 1\mod 12$, then $E$ is potentially ordinary.
\item If $l\equiv 5\mod 12$, then $E$ is potentially ordinary if and only if
its Kodaira type is $\III$ or $\IIIS$.
\item If $l\equiv 7\mod 12$, then $E$ is potentially ordinary if and only if
its Kodaira type is $\II, \IIS, \IV$ or $\IVS$.
\end{itemize}
\end{theorem}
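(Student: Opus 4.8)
The plan is to reduce everything to the reduction type of the good model that $E$ acquires, and then to read off ordinarity or supersingularity from the $j$-invariant of that reduction. Write $\bar E$ for the reduction over $\overline{\F}_l$ of a good model of $E$ over the minimal (Galois) extension $L/K$ over which $E$ attains good reduction; by definition $E$ is potentially ordinary (resp. supersingular) precisely when $\bar E$ is. By rigidity of good reduction (Serre--Tate), the inertia action on $E$ induces an embedding $\Gal(L/K)\hookrightarrow \mathrm{Aut}(\bar E)$, so in particular $[L:K]$ divides $|\mathrm{Aut}(\bar E)|$.

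First I would use the hypotheses to force extra automorphisms. Since $E$ is additive, $[L:K]>1$; since $E$ is not a quadratic twist of a curve with good reduction, $[L:K]\ne 2$ (for additive $E$, becoming good over a quadratic extension is exactly the condition of being such a twist). Hence $[L:K]\ge 3$, so $\mathrm{Aut}(\bar E)\supsetneq\{\pm1\}$ and therefore $\bar j\in\{0,1728\}$. This already settles $l=2,3$: in these characteristics the unique $j$-invariant with extra automorphisms, namely $0=1728$, is also the unique supersingular value, so $\bar E$ is automatically supersingular and $E$ is potentially supersingular.

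For $l\ge 5$ the reduction is tame (Theorem \ref{deltaisog}), and the Kodaira type determines $[L:K]\in\{2,3,4,6\}$: the types $\IZS,\ \IV/\IVS,\ \III/\IIIS,\ \II/\IIS$ correspond to $[L:K]=2,3,4,6$ respectively. Matching this against $|\mathrm{Aut}(\bar E)|$ pins down $\bar j$: the types $\II,\IIS,\IV,\IVS$ require a primitive cube root of unity in $\mathrm{Aut}(\bar E)$, forcing $\bar j=0$ and CM by $\Z[\zeta_3]$, while $\III,\IIIS$ require a primitive fourth root of unity, forcing $\bar j=1728$ and CM by $\Z[i]$ (the excluded type $\IZS$ is precisely the quadratic-twist case). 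Now I would invoke Deuring's criterion: the curve with $\bar j=0$ is supersingular iff $l$ is non-split in $\Q(\sqrt{-3})$, i.e. $l\equiv 2\bmod 3$, and the curve with $\bar j=1728$ is supersingular iff $l$ is non-split in $\Q(i)$, i.e. $l\equiv 3\bmod 4$.

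Finally I would assemble the congruences modulo $12$. For $l\equiv 1$ both $l\equiv 1\bmod 3$ and $l\equiv 1\bmod 4$ hold, so $\bar E$ is ordinary for every admissible Kodaira type; for $l\equiv -1$ both fail, giving supersingular in every case. For $l\equiv 5\bmod 12$ one has $l\equiv 1\bmod 4$ but $l\equiv 2\bmod 3$, so $\bar E$ is ordinary exactly when $\bar j=1728$, i.e. for types $\III,\IIIS$; for $l\equiv 7\bmod 12$ one has $l\equiv 1\bmod 3$ but $l\equiv 3\bmod 4$, so $\bar E$ is ordinary exactly when $\bar j=0$, i.e. for types $\II,\IIS,\IV,\IVS$. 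The main thing to get right is the dictionary between the Kodaira type, the degree $[L:K]$, and the order of $\mathrm{Aut}(\bar E)$ in the tame case; once that is in place, the rest is the classical theory of complex multiplication together with a bookkeeping check modulo $12$.
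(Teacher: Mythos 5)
Your strategy is essentially the paper's: use the Serre--Tate faithful action on the reduced curve to force $\bar j\in\{0,1728\}$, dispose of $l=2,3$ by noting that $0=1728$ is supersingular there, and for $l\ge 5$ identify which of the two $j$-invariants occurs from the Kodaira type, finishing with Deuring's criterion and the mod $12$ bookkeeping. The one pleasant variation is that where the paper simply reads off from Silverman's Table 4.1 that types $\II,\IIS,\IV,\IVS$ have $j$ reducing to $0$ and $\III,\IIIS$ have $j$ reducing to $1728$, you recover this by matching the order of the inertia action against $|\Aut(\bar E)|$; that is a legitimate substitute.

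There is, however, a genuine flaw in your group-theoretic setup: you embed the \emph{full} group $\Gal(L/K)$, for $L/K$ ``the'' minimal Galois extension of good reduction, into $\Aut(\bar E)$. Serre--Tate gives such an embedding only for the \emph{inertia} subgroup; the unramified part of $\Gal(L/K)$ acts on $\bar E$ semilinearly, through Frobenius, not as automorphisms. This is not a pedantic point: for a curve of type $\II$ over $\Q_5$ the inertia order is $6$, but since $\mu_3\not\subset\Q_5$ there is no totally ramified Galois sextic extension of $\Q_5$, so any minimal Galois $L/\Q_5$ of good reduction has degree $12$ (inertia $6$, residue degree $2$), and a group of order $12$ cannot embed into $\Aut(\bar E)$, which has order at most $6$ in characteristic $\ge 5$. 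For the same reason your dictionary ``$\IZS,\IV/\IVS,\III/\IIIS,\II/\IIS \leftrightarrow [L:K]=2,3,4,6$'' is false for $[L:K]$ and correct only for the inertia order (the semistability defect); note also that a minimal Galois extension of $K$ with good reduction is not unique, whereas the minimal extension of $K^{nr}$ is. Everything is repaired exactly as in the paper: work over $K^{nr}$, replacing $\Gal(L/K)$ throughout by $\Gal(F/K^{nr})$, where $F/K^{nr}$ is cut out by the inertia action on an $\ell$-adic Tate module ($\ell\ne l$). Then ``additive'' gives order $>1$, ``not a quadratic twist'' gives order $\ne 2$ directly by Lemma~\ref{quadtwist} (rather than via your detour through $[L:K]$), and your type/defect/automorphism matching and the Deuring computation go through verbatim.
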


\begin{proof}
Let $K^{nr}$ be the maximal unramified extension of $K$, and
$F/K^{nr}$ the (finite) extension cut out by the Galois action
on any $\ell$-adic Tate module of $E$ for $\ell\ne l$. 
By the criterion of N\'eron-Ogg-Shafarevich, 
$F$ is the unique minimal Galois extension of $K^{nr}$ 
where $E$ has good reduction.

The Galois group $\Gal(F/K^{nr})$ has order at least 3, since $E$ is not 
a quadratic twist of a curve with good reduction (cf. Lemma \ref{quadtwist}).
As explained in \cite{ST} proof of Thm. 2, it acts 
faithfully on the reduced curve $\tilde E$ defined over the residue field of $F$
as a group of automorphisms. This forces $j(\tilde E)$ to be either 0 or 1728,
see e.g. \cite{Sil1} Thm. III.10.1. If $l=2$ or 3, then $1728=0$ is a supersingular 
$j$-invariant, as asserted.

Now suppose $l>3$. 
By \cite{Sil2} IV.9, Table 4.1, either 
a) $E$ has reduction type $\II, \IIS, \IV$ or $\IVS$
and $j(E)$ reduces to 0, or b) $E$ has reduction type $\III, \IIIS$ and $j(E)$
reduces to 1728.
The $j$-invariant 0 is ordinary if and only
if $l\equiv 1\mod 3$, and 1728 is ordinary if and only
if $l\equiv 1\mod 4$; see \cite{Sil1} Ex. V.4.4,~V.4.5.
\end{proof}

\begin{corollary}
\label{delpotord5}
Suppose $l=p$ and $E$ has additive tame potentially good reduction. 
If the reduction is potentially ordinary, 
then $\delta=\delta'$ and $E, E'$ have the same Kodaira type. 
If the reduction is potentially supersingular, then 
$\delta=12-\delta'$ and $E, E'$ have opposite Kodaira types 
$(\II\leftrightarrow\IIS$, $\III\leftrightarrow\IIIS$, $\IV\leftrightarrow\IVS$,
 $\IZS\leftrightarrow\IZS)$.
\end{corollary}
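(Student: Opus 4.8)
The plan is to deduce everything from the discriminant congruence of Theorem~\ref{deltaisog} together with the fact that, for tame additive potentially good reduction, the valuation $\delta$ already determines the Kodaira type. Since $l=p$ and $E$ is tame, Theorem~\ref{deltaisog} applies verbatim: $E'$ is tame as well, $0<\delta,\delta'<12$, and $\delta'\equiv p\delta\pmod{12}$. In this range the admissible values $\delta\in\{2,3,4,6,8,9,10\}$ correspond bijectively to the Kodaira types $\II,\III,\IV,\IZS,\IVS,\IIIS,\IIS$, so $E$ and $E'$ have the same type exactly when $\delta=\delta'$, and opposite types exactly when $\delta'=12-\delta$ (note that $\IZS$ is self-opposite with $\delta=6=12-6$). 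Hence it suffices to prove $\delta=\delta'$ in the potentially ordinary case and $\delta'=12-\delta$ in the potentially supersingular case; and since $0<\delta,\delta'<12$, these are equivalent to the congruences $(p-1)\delta\equiv0$ and $(p+1)\delta\equiv0\pmod{12}$ respectively.

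The conceptual heart of the argument is that these congruences are governed by the order $e$ of the image of inertia, which equals $2$ for $\IZS$, $3$ for $\IV,\IVS$, $4$ for $\III,\IIIS$, and $6$ for $\II,\IIS$. Tameness together with $l=p$ forces $\gcd(p,e)=1$ (the wild types are precisely those with $p\mid e$), and because $(\Z/e\Z)^\times=\{\pm1\}$ for each $e\in\{2,3,4,6\}$ we have $p\equiv\pm1\pmod e$. Writing the admissible $\delta$ as $\frac{12k}{e}$ with $k$ a unit modulo $e$, multiplication by $p$ sends $\delta$ to $\delta$ when $p\equiv1\pmod e$ and to $12-\delta$ when $p\equiv-1\pmod e$. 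It then remains to match the two alternatives with the reduction type: using Theorem~\ref{potordkod} (together with the fact that $l=2,3$ force potentially supersingular reduction) I would check, by running through $p\bmod 12$, that for the types other than $\IZS$ one has $p\equiv1\pmod e$ precisely in the potentially ordinary cases and $p\equiv-1\pmod e$ precisely in the potentially supersingular cases. This matching is exact because the ordinary criteria for $j=0$ (namely $l\equiv1\bmod 3$) and for $j=1728$ (namely $l\equiv1\bmod 4$) align with $p\bmod e$: one has $e\in\{3,6\}$ for $j=0$ and $e=4$ for $j=1728$.

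The one case needing separate attention—and the only real obstacle—is type $\IZS$, for which $e=2$ means $E$ is a quadratic twist of a curve with good reduction. This is exactly the case excluded from Theorem~\ref{potordkod}, so whether $E$ is potentially ordinary or supersingular is not dictated by $p\bmod 12$, and the clean dichotomy above breaks down. Fortunately $\IZS$ is self-opposite with $\delta=6$, and $\IZS$ is not tame when $l=2$, so $p$ is odd; then both $(p-1)\cdot6$ and $(p+1)\cdot6$ are divisible by $12$, forcing $\delta=\delta'=6$ irrespective of the reduction. Thus the claimed conclusion holds in the ordinary and supersingular cases simultaneously for $\IZS$, and the remaining work is just the finite, mechanical congruence verification sketched above.
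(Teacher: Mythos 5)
Your proof is correct, and it follows essentially the same route as the paper's: both combine Theorem \ref{deltaisog} (tameness of $E'$, $0<\delta,\delta'<12$, and $\delta'\equiv p\delta\bmod 12$) with the ordinary/supersingular classification of Theorem \ref{potordkod}, reduce to a finite congruence check over the tame Kodaira types, and treat the self-opposite type $\IZS$ (the quadratic-twist-of-good-reduction case excluded from Theorem \ref{potordkod}) separately. Your reformulation via the inertia order $e$ and the observation that $(\Z/e\Z)^\times=\{\pm 1\}$ is a tidy repackaging of the paper's direct enumeration of the pairs with $\delta\ne\delta'$, but the substance is identical.
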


\begin{proof}
%
By Theorem \ref{deltaisog}, we have $\delta, \delta'<12$. Also,
if $\delta\!\ne\!\delta'$, then $\delta\!\not\equiv\!p\delta\!\mod\!12$,
equivalently $12\nmid\delta(p\!-\!1)$. 
Exchanging $E$, $E'$ if necessary, the possibilities with $\delta\!\ne\!\delta'$ 
are (cf. \cite{Sil2} IV.9, Table 4.1)
\begin{itemize}
\item
$\delta=2$, $\delta'=10$, $p\equiv 2\mod 3$,
\item
$\delta=4$, $\delta'=8$, $p\equiv 2\mod 3$,
\item
$\delta=3$, $\delta'=9$, $p\equiv 3\mod 4$.
\end{itemize}
By Theorem \ref{potordkod}, these are precisely the cases of 
potentially supersingular reduction unless $E$ is quadratic twist 
of a curve with good reduction. In the latter case, $l$ cannot be 2 
(as $E$ has tame reduction), so $E$ and $E'$ have 
Kodaira type $\IZS$ and $\delta=\delta'=6$.
\end{proof}

\section{Differentials}
\label{s:disc}

\begin{notation}
\label{notalpha}
We will write
$$
  \Alphasub{\phi}{K} = \Bigl|\frac{\phi^*\omega'}{\omega}\Bigr|_K^{-1}.
$$
\end{notation}

\begin{lemma}
\label{alphalead}
\par\noindent
\begin{enumerate}
\item
The isogeny $\phi$ induces a map on formal groups,
$$
  \phi: \hat E(\m_K) \to \hat E'(\m_K), \qquad \phi(T) = a T + \cdots,
$$  
with leading term $a=\frac{\phi^*\omega'}{\omega}\ctimes$unit$\ \>\in \cO_K$.
\item
$$
   \frac{|\coker\phi: E(K)\to E'(K)|}{|\ker\phi: E(K)\to E'(K)|} = 
      \Alphasub{\phi}{K}\,\frac{c'}{c}.
$$
\end{enumerate}
\end{lemma}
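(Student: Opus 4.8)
The plan is to establish both parts simultaneously by exploiting the interplay between the formal group filtration and the exact sequence of the isogeny on local points. For part (1), I would start from a minimal Weierstrass model and recall that the formal group $\hat E(\m_K)$ is the kernel of reduction, with the invariant differential $\omega$ giving a canonical coordinate $T$ (the parameter $-x/y$). Since $\phi^*$ acts on differentials by $\phi^*\omega' = a\,\omega$ for some $a \in K^\times$ (in the abusive notation of \S1.1), and since the formal logarithm intertwines the action of $\phi$ on $\hat E$ with multiplication by the leading coefficient, the leading term of $\phi(T) = aT + \cdots$ is exactly $\frac{\phi^*\omega'}{\omega}$ up to a unit. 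The integrality $a \in \cO_K$ follows because $\phi$ is a morphism of formal groups over $\cO_K$ (both curves have integral minimal models), so $\phi(T) \in \cO_K[[T]]$.

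For part (2), the key is the commutative diagram relating the isogeny on $K$-points to its action on the three pieces of the Néron model: the formal group (identity component's kernel of reduction), the group of components, and the reduced special fibre. I would apply the snake lemma to the diagram whose rows are the exact sequences
$$
  0 \to \hat E(\m_K) \to E(K) \to \widetilde E^{\mathrm{ns}}(\F_q) \to 0
$$
for $E$ and $E'$, with the vertical maps induced by $\phi$. Taking the alternating product of the orders of kernels and cokernels, the middle column contributes $\frac{|\coker(\phi\colon E(K)\to E'(K))|}{|\ker(\phi\colon E(K)\to E'(K))|}$, which by the snake lemma equals the product of the analogous ratios for the formal groups and for the reduced fibres.

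The formal-group contribution is computed from part (1): the map $\phi\colon \hat E(\m_K)\to \hat E'(\m_K)$ has leading coefficient $a$, so its image is $a\cdot\m_K^{\text{stuff}}$-like, and the ratio $\frac{|\coker|}{|\ker|}$ on formal groups equals $|a|_K^{-1} = \Alphasub{\phi}{K}$ by Notation~\ref{notalpha}. (Here one uses that multiplication-by-$a$ on a one-dimensional formal group over $\cO_K$ has cokernel of size $|a|_K^{-1}$ and trivial kernel when $a\neq 0$, or more precisely that the index of the image equals $v(a)$ in the appropriate sense.) The reduced-fibre contribution, meanwhile, is exactly the ratio of the numbers of components times a unit, and here the local Tamagawa numbers $c = |\widetilde E^{\mathrm{ns}}(\F_q)/\text{image}|$-type quantities enter: the snake lemma identifies the component-group and reduced-curve contribution with $\frac{c'}{c}$. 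Multiplying the two gives $\Alphasub{\phi}{K}\,\frac{c'}{c}$, as claimed.

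The main obstacle I anticipate is bookkeeping the reduced-fibre term carefully, since $\widetilde E^{\mathrm{ns}}(\F_q)$ is a smooth connected group of order prime to the relevant subtleties only in the good-reduction case; in the additive or multiplicative cases one must track the component group $\Phi = E(K)/E^0(K)$, whose order is the Tamagawa number $c$, and verify that the snake-lemma cokernel/kernel ratio over the finite residue field reproduces precisely $c'/c$ rather than $c'/c$ times some extra factor. Concretely, the delicate point is that $\phi$ need not map $E^0$ to $E'^0$ surjectively, so one must interpose the sequence $0\to E^0(K)\to E(K)\to \Phi\to 0$ and check that the finite-field cohomology (equivalently, the orders $|\widetilde E^{\mathrm{ns}}(\F_q)| = |\widetilde{E'}^{\mathrm{ns}}(\F_q)|$ coincide because isogenous curves have the same number of points over the residue field, by Weil) makes the connected-component contribution cancel, leaving exactly the component-group ratio $c'/c$. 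Once that cancellation is pinned down the identity follows immediately.
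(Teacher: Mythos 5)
Your strategy for (2) is, in substance, the proof of the result that the paper outsources: the paper's own proof of (1) reads ``N\'eron universal property $+$ \cite{Sil1} Cor.~IV.4.3'' and its proof of (2) is a citation of \cite{Scha} Lemma~3.8, whose proof is exactly your filtration argument --- $\hat E(\m_K)\subset E_0(K)\subset E(K)$, multiplicativity of $|\coker\phi|/|\ker\phi|$ along the two short exact sequences, the formal group contributing $\Alphasub{\phi}{K}$, the nonsingular points of the special fibres cancelling, and the component groups contributing $c'/c$. (Your first displayed sequence, with $E(K)$ surjecting onto $\tilde E^{\mathrm{ns}}(\F_q)$, is not exact unless the reduction is good; you correct this yourself by interposing $E_0(K)$, so the skeleton you end with is the right one. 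Note also that ``by Weil'' only handles good reduction: for bad reduction one argues instead that $E$ and $E'$ have the same reduction type, so $|\tilde E^{\mathrm{ns}}(\F_q)|=|\tilde E'^{\mathrm{ns}}(\F_q)|$ equals $q$, $q-1$ or $q+1$ in both cases.)

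There are, however, two genuine gaps. First, your justification of integrality in (1) is circular: you deduce $\phi(T)\in\cO_K[[T]]$ from ``$\phi$ is a morphism of formal groups over $\cO_K$'', which is precisely what has to be proved; an isogeny defined over $K$ between integral Weierstrass models is a priori only a power series with coefficients in $K$, and integrality of the models does not by itself exclude denominators. The missing idea is the one the paper uses: by the N\'eron universal property $\phi$ extends to a morphism of N\'eron models over $\cO_K$, and the induced map on formal completions along the zero sections is then automatically defined over $\cO_K$. Second, your parenthetical claim that the map on formal groups has ``trivial kernel when $a\ne 0$'' is false, and it fails in exactly the case this paper cares most about: when $l=p$ and $\ker\phi\subset\hat E(\m_K)$ (the case $(\dagger)=p$ of Table~1, cf.\ Proposition~\ref{omegaord}), the kernel of $\phi$ on $\hat E(\m_K)$ has order $p$. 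What the proof needs is only the statement about the ratio, namely
$$
  \frac{|\coker(\phi:\hat E(\m_K)\to\hat E'(\m_K))|}{|\ker(\phi:\hat E(\m_K)\to\hat E'(\m_K))|}
  =|a|_K^{-1}=\Alphasub{\phi}{K},
$$
and the correct argument restricts to the subgroups $\hat E(\m_K^n)$ for $n$ large: there the formal logarithm is an isomorphism onto $(\m_K^n,+)$ and transforms $\phi$ into multiplication by $a$, which is injective with cokernel of order $|a|_K^{-1}$; since $[\hat E(\m_K):\hat E(\m_K^n)]=q^{n-1}=[\hat E'(\m_K):\hat E'(\m_K^n)]$, the ratio $|\coker|/|\ker|$ is unchanged by this restriction. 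Your hedge (``the index of the image equals $v(a)$ in the appropriate sense'') points in this direction but does not supply the argument. With these two repairs, your proof is complete and agrees with the sources the paper cites.
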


\begin{proof}
(1) By the N\'eron universal property, $\phi$ 
extends to a morphism of N\'eron models, and thus induces
a map on formal groups. For the leading term, 
see \cite{Sil1} Ch. IV, especially Cor. IV.4.3. 
(2) \cite{Scha} Lemma 3.8.
\end{proof}

\begin{lemma}
\label{lnep:omega}
If $l\ne p$, then $\phi^*\omega'$ is minimal, so $\Alphasub{\phi}{K}=1$.
\end{lemma}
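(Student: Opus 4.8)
The plan is to play $\phi$ off against its dual $\phi^t$ and use the fact that their composite is multiplication by $p$, which is invertible on the formal group precisely because $l\ne p$. Throughout I write $\gamma=\frac{\phi^*\omega'}{\omega}$ and $\beta=\frac{(\phi^t)^*\omega}{\omega'}$, using the scalar abuse of notation from \S1.1, so that $\phi^*\omega'=\gamma\,\omega$ and $(\phi^t)^*\omega=\beta\,\omega'$. The goal is to show $\gamma$ is a unit.

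First I would establish that both $\gamma$ and $\beta$ are integral. Lemma~\ref{alphalead}(1) says that the leading term of the formal group map induced by $\phi$ is $\frac{\phi^*\omega'}{\omega}$ times a unit and lies in $\cO_K$; hence $\gamma\in\cO_K$. Since that lemma is a general statement about any isogeny of elliptic curves over $K$, I would apply it a second time to $\phi^t\colon E'\to E$, which is again a $p$-isogeny, to deduce $\beta\in\cO_K$ as well.

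Next I would compute the product $\beta\gamma$ using $\phi^t\circ\phi=[p]$ on $E$. Because pullback is contravariant, $[p]^*\omega=(\phi^t\circ\phi)^*\omega=\phi^*\bigl((\phi^t)^*\omega\bigr)=\phi^*(\beta\,\omega')=\beta\gamma\,\omega$. On the other hand multiplication by $p$ scales invariant differentials by $p$, i.e. $[p]^*\omega=p\,\omega$, so $\beta\gamma=p$. Taking normalised absolute values and using that $p$ is a unit in $\cO_K$ when $l\ne p$ (hence $|p|_K=1$), I get $|\gamma|_K\,|\beta|_K=1$; combined with $|\gamma|_K\le 1$ and $|\beta|_K\le 1$ from integrality, this forces $|\gamma|_K=1$. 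Therefore $\gamma$ is a unit, $\phi^*\omega'=\gamma\,\omega$ is again minimal, and $\Alphasub{\phi}{K}=|\gamma|_K^{-1}=1$.

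The proof is short, so there is no serious obstacle; the one point that needs care is the identity $\beta\gamma=p$, where one must track the direction of the two pullbacks and invoke $[p]^*\omega=p\,\omega$ correctly. Everything else is immediate once the integrality of \emph{both} $\gamma$ and $\beta$ is in hand—this symmetric use of the dual isogeny is what makes the two factors of a unit each a unit.
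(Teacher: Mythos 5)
Your proof is correct and is essentially identical to the paper's: the paper also writes $\phi^*\omega'=a\omega$, $(\phi^t)^*\omega=a'\omega'$ with $a,a'\in\cO_K$ by Lemma~\ref{alphalead}, and concludes from $\phi^t\phi=[p]$ that $aa'=p\in\cO_K^\times$, forcing both to be units. Your write-up just makes explicit the contravariance computation and the absolute-value bookkeeping that the paper leaves implicit.
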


\begin{proof}
%
Write $\phi^*\omega'=a\omega$, $(\phi^t)^*\omega=a'\omega'$
with $a, a'\in\cO_K$ by Lemma \ref{alphalead}.
Because $\phi^t\phi=[p]$, we have $aa'=p\in\cO_K^\times$. 
So $a$ and $a'$ are units, and $\phi^*\omega'$ is minimal.
%
%
%
%
\end{proof}

\begin{lemma}
\label{delquo}
Suppose $F/K$ is a finite extension. Then 
$$
  \frac{\phi^*\omsub{E'\!}{K}}{\omsub{E}{K}} = \frac{\phi^*\omsub{E'\!}{F}}{\omsub{E}{F}}
  \times \text{unit}
  \qquad \Longleftrightarrow \qquad 
  \frac{\Delta_{E/K}}{\Delta_{E'/K}} = 
  \frac{\Delta_{E/F}}{\Delta_{E'/F}} \times \text{unit}.
$$
If $l\!\ne\!p$, or $E/K$ is semistable, or $l=p$ and $E$ has tame potentially 
ordinary reduction, then the formulae~hold.
\end{lemma}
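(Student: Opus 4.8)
\emph{The plan.} The two displayed conditions are in fact the \emph{same} condition, and I would prove the stated equivalence by a single change-of-model computation. For $X\in\{E,E'\}$ write $\lambda_X=\omsub{X}{K}/\omsub{X}{F}\in F^\times$ for the ratio of the N\'eron differentials over $K$ and over $F$ (in the abuse of notation of \S1.1). Since a change of Weierstrass equation that scales the invariant differential by $u$ scales the discriminant by $u^{-12}$, the element $\Delta_X\cdot\omega_X^{\otimes 12}$ of the twelfth power of the cotangent line is independent of the model; comparing the minimal models over $K$ and over $F$ gives $\Delta_{E/K}/\Delta_{E/F}=\lambda_E^{-12}$ and likewise for $E'$. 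Setting $u=\lambda_{E'}/\lambda_E$, I would then record
$$
  \frac{\phi^*\omsub{E'\!}{K}}{\omsub{E}{K}}
  = u\,\frac{\phi^*\omsub{E'\!}{F}}{\omsub{E}{F}},
  \qquad
  \frac{\Delta_{E/K}}{\Delta_{E'/K}}
  = u^{12}\,\frac{\Delta_{E/F}}{\Delta_{E'/F}} .
$$
The left-hand quantity agrees with its $F$-analogue up to a unit iff $u\in\cO_F^\times$, and the right-hand one iff $u^{12}\in\cO_F^\times$; as $F$ is discretely valued, $u^{12}$ is a unit iff $u$ is, so the two conditions coincide. This settles the first assertion.

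For the second assertion it therefore suffices, in each of the three listed cases, to check that $u$ is a unit, equivalently (taking $v_F$, with $e=e(F/K)$) that $\delta_F-\delta_F'=e(\delta-\delta')$, where $\delta_F=v_F(\Delta_{E/F})$ and $\delta_F'=v_F(\Delta_{E'/F})$. When $l\ne p$ I would invoke Lemma \ref{lnep:omega} over both $K$ and $F$: its proof uses only that $p\in\cO^\times$, so it applies verbatim over $F$ (same residue characteristic $\ne p$), making both differential ratios units and hence $u$ a unit. When $E/K$ is semistable, so is $E'$ (isogenous curves share the conductor, hence the property $f\le 1$), and both stay semistable over every $F$; for semistable reduction $v(\Delta)=\max(0,-v(j))$, which simply scales by $e$ under the base change since $v_F(j)=e\,v_K(j)$, giving $\delta_F=e\delta$ and $\delta_F'=e\delta'$, whence the required identity is immediate.

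The potentially ordinary case $l=p$ is the one to watch, and I would reduce it to Corollary \ref{delpotord5}. Over $K$ that corollary already gives $\delta=\delta'$, so I must show $\delta_F=\delta_F'$ for every finite $F/K$. The two facts I would lean on are that potential ordinariness is intrinsic to the reduction of the good model and so is unchanged by base change, and that tameness cannot be destroyed by enlarging the field, since the image of inertia only shrinks and stays prime to $l$. Hence over any $F$ the curve $E$ either acquires good reduction --- in which case $E'$ does too (good reduction is isogeny-invariant, comparing $\ell$-adic Tate modules for some $\ell\ne l$) and $\delta_F=\delta_F'=0$ --- or it still has additive tame potentially ordinary reduction, and Corollary \ref{delpotord5} applied over $F$ gives $\delta_F=\delta_F'$ outright. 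Either way $\delta_F-\delta_F'=0=e(\delta-\delta')$, as needed. The main obstacle is precisely this last case: one must verify that the hypotheses of Corollary \ref{delpotord5} genuinely persist over an arbitrary, possibly wildly ramified, $F$, and separately dispose of the extensions over which good reduction is attained. The other two cases are routine once the first assertion is in hand.
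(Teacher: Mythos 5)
Your proposal is correct and follows essentially the same route as the paper: the equivalence comes from the model-independence of $\Delta\cdot\omega^{\otimes 12}$ (so the two ratios differ by $u$ versus $u^{12}$, and a 12th power is a unit iff $u$ is), and the three cases are then settled by Lemma \ref{lnep:omega} for $l\ne p$ and by Corollary \ref{delpotord5} (applied over both $K$ and $F$, with the persistence of tameness and potential ordinariness spelled out) for $l=p$. The only cosmetic difference is the semistable case, which the paper handles on the differential side (minimal differentials of semistable curves stay minimal in all extensions) while you handle it on the discriminant side via $v(\Delta)=\max(0,-v(j))$; these are equivalent and both routine.
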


\begin{proof}
It is easy to see that up to units (cf. \cite{Sil1} Table III.1.2), 
$$
  \frac{\Delta_{E/K}}{\Delta_{E/F}} = \Bigl(\frac{\omsub{E}{K}}{\omsub{E}{F}}\Bigr)^{-12}
    \qquad\text{and}\qquad
  \frac{\Delta_{E'/K}}{\Delta_{E'/F}} 
    = \Bigl(\frac{\omsub{E'\!}{K}}{\omsub{E'\!}{F}}\Bigr)^{-12}
    = \Bigl(\frac{\phi^*\omsub{E'\!}{K}}{\phi^*\omsub{E'\!}{F}}\Bigr)^{-12}.
$$
So
$\frac{\Delta_{E/K}}{\Delta_{E'/K}}/\frac{\Delta_{E/F}}{\Delta_{E'/F}}$
is the 12th power of 
$\frac{\phi^*\omsub{E'\!}{F}}{\omsub{E}{F}} / \frac{\phi^*\omsub{E'\!}{K}}{\omsub{E}{K}}$, up
to a unit.

For the second claim, if $l\ne p$ or $E/K$ is semistable, then the left-hand
formula holds (Lemma \ref{lnep:omega} and the fact that for semistable curves 
minimal differentials stay minimal in all extensions).
If $l=p$ and $E$ is tame, the right-hand
formula holds by Corollary \ref{delpotord5}.
\end{proof}

\begin{remark}
\label{twistisog}
Suppose $E$ and $E'$ are in Weierstrass form,
$$
  E:y^2=f(x), \qquad \qquad E': y^2=g(x).
$$
Since $\phi(-P)=-\phi(P)$ and every even rational function on $E$ is 
a function of $x$ (cf. \cite{Sil1}, proof of Cor. III.2.3.1), 
$\phi$ has the form 
$$
  \phi: (x,y) \longmapsto (\xi(x),y\eta(x)), \qquad \xi(x),\eta(x)\in K(x).
$$
If $F=K(\sqrt d)$ is a quadratic extension, and 
$$
  E_d:dy^2=f(x), \qquad \qquad E'_d: dy^2=g(x)
$$
the quadratic twists of $E,E'$ by $d$, then the same 
formula $(\xi(x),y\eta(x))$ defines an isogeny $\phi_d: E_d\to E'_d$.
It fits into a commutative diagram
$$
\begin{CD}
  0 @>>> E_d(K) @>>> E(F) @>N>> E(K) @>>> \frac{E(K)}{N E(F)} @>>> 0 \\
    @.  @V{\phi_d}VV @V{\phi}VV @V{\phi}VV @V{\phi}VV \\
  0 @>>> E'_d(K) @>>> E'(F) @>N>> E'(K) @>>> \frac{E'(K)}{N E'(F)} @>>> 0, \\
\end{CD}
$$
where 
the map
$E_d(K)\to E(F)$ is $(x,y)\mapsto (x,y\sqrt d)$, and 
$N$ is the norm (or trace) map $E(F)\to E(K)$, $E'(F)\to E'(K)$.
\end{remark}

\begin{lemma}
\label{kercoker4}
Let $F=K(\sqrt d)$ be a quadratic extension,
$E_d, E'_d$ the quadratic twists of $E,E'$ by $d$, and $\phi_d$ the
corresponding isogeny. The groups $\frac{E(K)}{N E(F)}$, 
$\frac{E'(K)}{N E'(F)}$ are finite, and
$$
   \Bigl(\Alphasub{\phi_d}{K}\frac{\csub{E'_d}{K}}{\csub{E_d}{K}}\Bigr)^{-1}\cdot
   \Alphasub{\phi}{F}\frac{\csub{E'\!}{F}}{\csub{E}{F}}\cdot
   \Bigl(\Alphasub{\phi}{K}\frac{\csub{E'\!}{K}}{\csub{E}{K}}\Bigr)^{-1}\cdot
   \frac{|\frac{E'(K)}{N E'(F)}|}{|\frac{E(K)}{N E(F)}|} = 1.
$$
\end{lemma}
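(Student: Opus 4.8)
The plan is to reduce everything to the multiplicativity of the ratio $q(\psi) := |\coker\psi|/|\ker\psi|$ across the commutative diagram of Remark \ref{twistisog}, and then to recognise each of the three factors $\Alphasub{\cdot}{\cdot}\,c'/c$ as such a ratio via Lemma \ref{alphalead}(2).

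First I would settle finiteness. Since $F/K$ is a quadratic extension of local fields, the norm (trace) map on the formal groups has image of finite index: it agrees with $\mathrm{Tr}_{F/K}$ on a finite-index subgroup, whose image is an open $\cO_K$-ideal, so $E(K)/NE(F)$ and $E'(K)/NE'(F)$ are finite. For any homomorphism $\psi\colon A\to B$ of finite abelian groups one has $q(\psi)=|B|/|A|$; applied to the induced map $\bar\phi\colon E(K)/NE(F)\to E'(K)/NE'(F)$ this identifies $q(\bar\phi)$ with the norm-index factor $|E'(K)/NE'(F)|/|E(K)/NE(F)|$ appearing in the statement.

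Next, the diagram of Remark \ref{twistisog} is a morphism of four-term exact sequences, $0\to E_d(K)\to E(F)\to E(K)\to E(K)/NE(F)\to 0$ and its primed counterpart, with vertical maps $\phi_d,\phi,\phi,\bar\phi$. All four have finite kernel and cokernel (the first three because isogenies of local points do, the last because its source and target are finite). Splitting each row at the common subgroup $NE(F)=\ker(E(K)\to E(K)/NE(F))$ into two short exact sequences and applying the snake lemma twice, the multiplicativity of $q$ in short exact sequences yields $q(\phi_d)\,q(\phi_K)=q(\phi_F)\,q(\bar\phi)$, where $\phi_K,\phi_F$ denote $\phi$ on $K$- and $F$-points.

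Finally I would substitute, via Lemma \ref{alphalead}(2), the values $q(\phi_d)=\Alphasub{\phi_d}{K}\csub{E'_d}{K}/\csub{E_d}{K}$, $q(\phi_K)=\Alphasub{\phi}{K}\csub{E'}{K}/\csub{E}{K}$ and $q(\phi_F)=\Alphasub{\phi}{F}\csub{E'}{F}/\csub{E}{F}$ (the lemma applies verbatim to the $p$-isogeny $\phi_d$ over $K$), together with the identification of $q(\bar\phi)$ above. Dividing the identity $q(\phi_d)\,q(\phi_K)=q(\phi_F)\,q(\bar\phi)$ through by $q(\phi_d)\,q(\phi_K)$ gives exactly the displayed product. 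The main obstacle is the multiplicativity bookkeeping: one must check that the map induced on the subquotient $NE(F)$ has finite kernel and cokernel so that every $q$ is defined, and keep careful track of which factors are inverted, since the snake-lemma computation is elementary but the alternating signs are easy to mismanage.
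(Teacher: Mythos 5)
Your proposal is correct and follows essentially the same route as the paper: both proofs rest on the commutative diagram of Remark \ref{twistisog}, the fact that the alternating product of $|\ker|/|\coker|$ along a morphism of exact rows equals $1$ (which you justify by splitting into short exact sequences and applying the snake lemma), and Lemma \ref{alphalead}(2) to identify the three factors $\Alphasub{\cdot}{\cdot}\,c'/c$. The only difference is cosmetic: for finiteness the paper simply observes that $E(K)/NE(F)$ is a quotient of the finite group $E(K)/2E(K)$ (since $NP=2P$ for $P\in E(K)\subset E(F)$), which is lighter than your formal-group trace argument.
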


\begin{proof}
The groups $\frac{E(K)}{N E(F)}$, $\frac{E'(K)}{N E'(F)}$ are 
quotients of $\frac{E(K)}{2E(K)}$, $\frac{E'(K)}{2E'(K)}$, which are 
finite. 
Now consider the commutative diagram above. 
Because the alternating product of $|\ker|/|\coker|$ is 1,
Lemma \ref{alphalead}(2) gives the claim.
\end{proof}

\begin{proposition}
\label{twistmain}
Let $F=K(\sqrt d)$ be a quadratic extension,
$E_d, E'_d$ the quadratic twists of $E,E'$ by $d$, and $\phi_d$ the
corresponding isogeny. 

\noindent
(1) Write $K_n, F_n$ for the degree $n$ unramified extensions of $K, F$.
Then
$$
  \frac{\Alphasub{\phi}{K}\Alphasub{\phi_d}{K}}{\Alphasub{\phi}{F}} =
  \arrowlim{\text{$\vphantom{\int^a}n$ odd}}{\scriptstyle n\to\infty}\sqrt[n]{
    \tfrac{
    \>\>|{E'(K_n)}/{N E'(F_n)}|\>\>
    }{
    |{E(K_n)}/{N E(F_n)}|
    }}.
$$
If $l\ne 2$, this quotient is 1.

\noindent
(2) 
We have $\Alphasub{\phi_d}{K}=\Alphasub{\phi}{K}$ and $\Alphasub{\phi}{F}=\Alphasub{\phi}{K}^2$
unless (i) $l=p$ and $E$ has additive potentially supersingular reduction
or (ii) $l=p=2$ and $E$ has supersingular reduction. 
\end{proposition}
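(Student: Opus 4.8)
The plan is to treat the two parts separately, deriving part~(1) from Lemma~\ref{kercoker4} applied over a tower of unramified extensions, and part~(2) from Lemmas~\ref{lnep:omega} and~\ref{delquo} together with the explicit twisted models of Remark~\ref{twistisog}.

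For part~(1), I would apply the identity of Lemma~\ref{kercoker4} verbatim over the unramified extensions $K_n,F_n$ in place of $K,F$. The crucial observation is that $\Alphasub{\phi}{K}=|\frac{\phi^*\omega'}{\omega}|_K^{-1}$ scales as an $n$th power under unramified base change: minimal differentials remain minimal over $K_n/K$ and $|x|_{K_n}=|x|_K^{\,n}$ for $x\in K^\times$, so $\Alphasub{\phi}{K_n}=\Alphasub{\phi}{K}^{\,n}$, and likewise for $\Alphasub{\phi_d}{K_n}$ and $\Alphasub{\phi}{F_n}$. Solving Lemma~\ref{kercoker4} for the $\alpha$-factor therefore expresses $(\Alphasub{\phi}{K}\Alphasub{\phi_d}{K}/\Alphasub{\phi}{F})^{n}$ as a ratio of Tamagawa numbers times the ratio of norm-cokernel orders. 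All Tamagawa numbers occurring are bounded independently of $n$, being at most the orders of the fixed component groups of $E,E'$; hence their contribution disappears upon taking $n$th roots as $n\to\infty$ through odd $n$, yielding the displayed limit formula. For the final assertion, when $l\ne 2$ the Tate cohomology groups $\hat H^0(\Gal(F_n/K_n),E(F_n))=\frac{E(K_n)}{NE(F_n)}$ are killed by $2$; using the filtration of $E(F_n)$ by the formal group and the identity component of the special fibre, the pro-$l$ part contributes nothing (an $l$-group killed by $2$ is trivial) and the identity component contributes nothing by surjectivity of the norm on connected groups over finite fields (Lang), so the only surviving piece is the component group, of bounded order. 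Thus the norm cokernels are bounded, their $n$th roots tend to $1$, and the quotient is $1$.

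For part~(2), if $l\ne p$ then $\Alphasub{\phi}{K}=\Alphasub{\phi_d}{K}=\Alphasub{\phi}{F}=1$ by Lemma~\ref{lnep:omega} and both equalities hold trivially; so assume $l=p$. I would reduce each equality to an invariance statement for the difference $\delta-\delta'$ of minimal discriminant valuations. For $\Alphasub{\phi}{F}=\Alphasub{\phi}{K}^2$ this is exactly Lemma~\ref{delquo}: since $|x|_F=|x|_K^{2}$ for $x\in K^\times$, the equality holds iff $\frac{\Delta_{E/K}}{\Delta_{E'/K}}$ and $\frac{\Delta_{E/F}}{\Delta_{E'/F}}$ agree up to units, which Lemma~\ref{delquo} guarantees when $E$ is semistable or has tame potentially ordinary reduction. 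For $\Alphasub{\phi_d}{K}=\Alphasub{\phi}{K}$ I would use the twisted models of Remark~\ref{twistisog}: the standard twisting isomorphism over $F$ shows that in those (generally non-minimal) models the leading coefficient $\frac{\phi_d^*\omega'_d}{\omega_d}$ equals $\frac{\phi^*\omega'}{\omega}$, so the two $\alpha$'s differ only by the discrepancy between the twisted and the minimal differentials. As a twist multiplies the model discriminant by a fixed power of $d$ common to $E$ and $E'$, this discrepancy cancels in the difference, and (via $\Delta\sim\omega^{-12}$) the equality $\Alphasub{\phi_d}{K}=\Alphasub{\phi}{K}$ holds iff $\delta_{E_d}-\delta_{E'_d}=\delta-\delta'$.

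It then remains to verify these two invariances and pin down the exceptions. Both invariances of $\delta-\delta'$ hold for semistable reduction, where the relations $\delta=p\delta'$ or $\delta'=p\delta$ and their twists are controlled by the Tate curve of Appendix~\ref{s:apptate}, and, by Corollary~\ref{delpotord5}, for tame potentially ordinary reduction, where $\delta=\delta'$ and the same persists for $E_d,E'_d$ and after base change. They fail precisely when $l=p$ and the reduction is potentially supersingular: by Corollary~\ref{delpotord5} one then has the rigid relation $\delta'=12-\delta$ with $\delta\ne\delta'$, which is preserved neither by base change to $F$ (over which $E$ acquires good reduction, forcing $\delta-\delta'$ to $0$) nor, for $l=p=2$ good supersingular reduction, by a ramified twist into the additive potentially supersingular regime. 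These are exactly cases~(i) and~(ii). I expect the main obstacle to be the $p=2$ supersingular case~(ii): here $E$ is semistable so $\Alphasub{\phi}{F}=\Alphasub{\phi}{K}^2$ still holds, but the leading term $\frac{\phi^*\omega'}{\omega}$ of a $2$-isogeny on a supersingular formal group in residue characteristic $2$ is genuinely delicate—the same phenomenon that leaves the good supersingular entry of Table~1 undetermined—so $\Alphasub{\phi_d}{K}$ cannot be controlled, and the case must simply be excluded rather than resolved.
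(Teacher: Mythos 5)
Part (1) of your proposal is correct and is essentially the paper's own argument: Lemma \ref{kercoker4} applied over $K_n$, $F_n$, the scaling $\Alphasub{\phi}{K_n}=\Alphasub{\phi}{K}^n$ for unramified extensions, bounded Tamagawa numbers disappearing in the limit; and for $l\ne 2$ your boundedness argument for the norm cokernels (killed by $2$, trivial pro-$l$ part) is a valid variant of the paper's comparison of $l$-parts. The genuine gap is in part (2). Your reduction of $\Alphasub{\phi_d}{K}=\Alphasub{\phi}{K}$ to the twist invariance $\delta_{E_d}-\delta_{E'_d}=\delta-\delta'$ is algebraically correct, but your verification of that invariance for potentially ordinary reduction rests on Corollary \ref{delpotord5}, which applies only to \emph{tame} reduction. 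Take $l=p=2$ and $E$ with good ordinary reduction, a case the proposition does \emph{not} exclude. The ramified quadratic twist $E_d$ then has \emph{wild} additive potentially ordinary reduction (inertia acts through the order-$2$ group $\Gal(F/K)$, whereas tameness at $l=2$ forces type $\IV$ or $\IVS$), so Corollary \ref{delpotord5} gives no information about $\delta_{E_d}-\delta_{E'_d}$; your phrase ``the same persists for $E_d,E'_d$'' is exactly the unproved assertion. Nor can it be imported from elsewhere: the equality $\delta_{E_d}=\delta_{E'_d}$ for wild potentially ordinary reduction at $p=2,3$ is Theorem \ref{delmain}(1), whose proof in the paper \emph{uses} Proposition \ref{twistmain}(2), so invoking it here is circular.

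This is precisely where the paper brings in an ingredient absent from your proposal: for $l=2$ and $E$ good ordinary or split multiplicative, the norm quotients $E(K)/NE(F)$ have order at most $4$ by Kramer--Tunnell (\cite{KT} Prop.~8.6, Prop.~4.1), so the limit formula of part (1) applies with bounded numerator and denominator and yields $\Alphasub{\phi}{F}=\Alphasub{\phi}{K}\Alphasub{\phi_d}{K}$ even when $l=2$; combined with $\Alphasub{\phi}{F}=\Alphasub{\phi}{K}^2$ (Lemma \ref{delquo}, semistable case), this gives $\Alphasub{\phi_d}{K}=\Alphasub{\phi}{K}$ without ever touching the discriminant of the wild twist, and the remaining cases follow by the twisting argument. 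Unless you import this bound (or an equivalent substitute), your argument does not close the case $l=p=2$ good ordinary, nor its quadratic twists. Two lesser issues of the same kind: good \emph{supersingular} reduction at $l=p\ge 3$ is also not excluded by the statement, and there your appeal to Corollary \ref{delpotord5} (phrased for potentially ordinary twists) needs the extra remark that both twists have type $\IZS$, hence $\delta_{E_d}=\delta_{E'_d}=6$; and the base-change equality $\Alphasub{\phi}{F}=\Alphasub{\phi}{K}^2$ for \emph{additive} potentially multiplicative $E$ is outside the scope of Lemma \ref{delquo}, so it requires a separate computation (e.g.\ via Theorem \ref{lorthm}) or the paper's extension-by-twisting step.
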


\begin{proof}
(1)
We 
apply Lemma \ref{kercoker4} for $E$ in $F_n/K_n$; because $n$ is odd, we 
have $F_n=K_n(\sqrt d)$. 
The minimal differentials stay
the same in unramified extensions, so $\Alphasub{\phi}{K_n}=\Alphasub{\phi}{K}^n$,
and similarly for $\Alphasub{\phi}{F_n}$ and $\Alphasub{\phi_d}{K_n}$. Thus,
$$
\frac{\csub{E'\!}{F_n}}{\csub{E}{F_n}}
\frac{\csub{E}{K_n}}{\csub{E'\!}{K_n}}
\frac{\csub{E_d}{K_n}}{\csub{E'_d}{K_n}}
    \frac{
    \>\>|{E'(K_n)}/{N E'(F_n)}|\>\>
    }{
    |{E(K_n)}/{N E(F_n)}|
    }
  =\frac{\Alphasub{\phi}{K}^n\Alphasub{\phi_d}{K}^n}{\Alphasub{\phi}{F}^n}.
$$
All the Tamagawa numbers are bounded, so the claim follows by taking 
$n$th roots and letting $n\to\infty$.
Moreover, if $l\ne 2$, because the norm quotients are 2-groups and $\alpha$'s are 
powers of $l$, we can compare the $l$-parts before taking the limit, and we 
find that $\frac{\Alphasub{\phi}{K}\Alphasub{\phi_d}{K}}{\Alphasub{\phi}{F}}=1$.

(2) We may assume $l=p$, as otherwise all $\alpha$'s are 1 by 
Lemma \ref{lnep:omega}. 

(a) 
If $l\ne 2$, or $E$ has either good ordinary or 
split multiplicative reduction, 
we have
${\Alphasub{\phi}{F}}={\Alphasub{\phi}{K}\Alphasub{\phi_d}{K}}$:
if $l\ne 2$, this is proved in (1); otherwise, 
the norm quotients have size at most 4 
by \cite{KT} Prop~8.6, Prop~4.1, and so
${\Alphasub{\phi}{F}}={\Alphasub{\phi}{K}\Alphasub{\phi_d}{K}}$ by (1).

(b) 
If either $E$ is semistable or
$l=p>3$ and $E$ is potentially ordinary, 
$$
  \frac{\phi^*\omsub{E'\!}{K}}{\omsub{E}{K}} = \frac{\phi^*\omsub{E'\!}{F}}{\omsub{E}{F}} \times \text{unit},
$$ 
by Lemma \ref{delquo}, and 
$$
\Alphasub{\phi}{F} = \Bigl|\frac{\phi^*\omsub{E'\!}{F}}{\omsub{E}{F}}\Bigr|_F^{-1} =
  \Bigl|\frac{\phi^*\omsub{E'\!}{K}}{\omsub{E}{K}}\Bigr|_K^{-2} = \Alphasub{\phi}{K}^2.
$$


(c) Combining (a) and (b), we find that 
$\Alphasub{\phi}{K}=\Alphasub{\phi_d}{K}$ 
and $\Alphasub{\phi}{F}=\Alphasub{\phi}{K}^2$ 
in the following three cases:
\begin{itemize}
\item 
$l>3$ and $E$ is semistable or potentially ordinary, 
\item
$l=3$ and $E$ is semistable,
\item
$l=2$ and $E$ is split multiplicative or good ordinary.
\end{itemize}
It follows that $\Alphasub{\phi}{K}=\Alphasub{\phi_d}{K}$ 
and $\Alphasub{\phi}{F}=\Alphasub{\phi}{K}^2$ 
also hold for quadratic twists of all such curves, 
as $\Alphasub{\phi_{d_1}}{K}=\Alphasub{\phi}{K}=\Alphasub{\phi_{d_2}}{K}$ for any pair of twists. 
Since a curve with potentially multiplicative reduction is a 
quadratic twist of a semistable one, and a curve with potentially ordinary
reduction a quadratic twist of a good ordinary one when $l\le 3$
(Theorem \ref{potordkod}), the result holds in all the cases claimed.
\end{proof}

%
%

\begin{proposition}
\label{omegaord}
Suppose $l=p$ and $E$ has good ordinary reduction. 
Let $F=K(\ker\phi)$ be the field obtained by adjoining
the coordinates of points in $\ker\phi$. 
Then
$$
  \frac{\phi^*\omega'}{\omega} = 
    \bigleftchoice{\text{$p\ctimes$unit}}{\text{if }\ker\phi\subset \hat E(\m_F)}
                  {\text{unit}}{\text{otherwise}}.
$$
\end{proposition}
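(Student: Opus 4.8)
The plan is to determine the valuation of $\frac{\phi^*\omega'}{\omega}$ from the behaviour of $\phi$ on formal groups, the two cases being distinguished by whether $\ker\phi$ is connected or \'etale. Since $l=p$ and $E$ has good ordinary reduction, the formal group $\hat E$ over $\cO_K$ has height $1$, and the $p$-torsion sits in a connected--\'etale sequence $0\to\hat E[p]\to E[p]\to E[p]/\hat E[p]\to 0$ with both outer terms of order $p$; the connected part $\hat E[p]$ reduces to the Frobenius kernel $\mu_p$ of the ordinary reduced curve. An order-$p$ subgroup $\ker\phi\subset E[p]$ is thus either equal to $\hat E[p]$, or it meets $\hat E$ trivially and maps isomorphically onto the \'etale quotient. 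The first alternative is exactly the condition $\ker\phi\subset\hat E(\m_F)$, since the kernel points reduce to the identity precisely when the subgroup is the connected one.

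By Lemma \ref{alphalead}(1), $a:=\frac{\phi^*\omega'}{\omega}$ is, up to a unit, the leading coefficient of $\hat\phi\colon\hat E\to\hat E'$, so it suffices to compute $v(a)$. The reduction $\bar a$ is the leading coefficient of the formal-group map attached to the reduced isogeny $\tilde\phi$, and hence $a\in\cO_K^\times$ if and only if $\tilde\phi$ is separable, i.e. if and only if $\ker\phi$ is \'etale. This already settles the ``otherwise'' branch: when $\ker\phi\not\subset\hat E(\m_F)$ the kernel is \'etale, $\tilde\phi$ is separable, and $\frac{\phi^*\omega'}{\omega}$ is a unit.

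In the remaining case $\ker\phi\subset\hat E(\m_F)$ I would pass to the dual isogeny. Writing $\phi^*\omega'=a\omega$ and $(\phi^t)^*\omega=a'\omega'$ with $a,a'\in\cO_K$, the identity $\phi^t\phi=[p]$ gives $aa'=p$, exactly as in the proof of Lemma \ref{lnep:omega}; since $l=p$ this says $v(a)+v(a')=v(p)$. Now $\ker\phi=\hat E[p]$ reduces to the Frobenius kernel, so $\tilde\phi$ is purely inseparable, a Frobenius up to isomorphism; consequently $\tilde{\phi^t}$ is the Verschiebung, which is separable because the reduction is ordinary (equivalently, $\ker\phi^t$ is the Cartier dual of $\mu_p$, hence \'etale). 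By the previous paragraph applied to $\phi^t$, $a'$ is a unit, so $v(a)=v(p)$ and $\frac{\phi^*\omega'}{\omega}=p\cdot\text{unit}$, which is the first branch.

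The main obstacle is the dictionary between the scheme-theoretic dichotomy (connected versus \'etale kernel over $\cO_K$) and the analytic condition $\ker\phi\subset\hat E(\m_F)$, together with the fact that a connected kernel forces $\tilde\phi$ to be inseparable and its dual $\tilde{\phi^t}$ separable. Both rest on the height-$1$ structure of $\hat E$ in the ordinary case and on Frobenius--Verschiebung (Cartier) duality; once these standard facts are recorded, the valuation count via $aa'=p$ is immediate.
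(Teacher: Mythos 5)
Your proof is correct and takes essentially the same route as the paper: both identify $a=\frac{\phi^*\omega'}{\omega}$ with the leading coefficient on formal groups via Lemma \ref{alphalead}(1), use $aa'=p$ coming from $\phi^t\phi=[p]$, and exploit ordinariness to show exactly one of $a,a'$ is a unit, matching the two alternatives with the position of $\ker\phi$ relative to $\hat E(\m_F)$. The only difference is cosmetic: you express ordinariness through the connected--\'etale dichotomy and Frobenius--Verschiebung duality, whereas the paper argues that $[p]=\tilde\phi\circ\tilde\phi^t$ has height $1$ on the formal group of the reduced curve, so one of $\tilde\phi,\tilde\phi^t$ is an isomorphism on formal groups.
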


\begin{proof}
The isogeny $\phi$ induces an isogeny on formal groups 
$$
  \phi: \hat E(\m_F) \to \hat E'(\m_F), \qquad \phi(T) = a T + \cdots,
$$  
with $a=\frac{\phi^*\omega'}{\omega}$ by Lemma \ref{alphalead} (1).
Define $a'$ similarly for $\phi^t$.
The reduction $\tilde E = E \mod \m_F$ is an ordinary elliptic curve, 
so $[p]=\tilde\phi\circ\tilde\phi^t$ is an isogeny of height 1 on its formal
group. Hence either $\tilde\phi$ or $\tilde\phi^t$ is an isomorphism on formal
groups of the reduced curves, in other words either $a\mod \m_F$ or $a'\mod \m_F$ is non-zero.
Because $aa'=p$, one of $a, a'$ is a unit and the other one is $p\ctimes$unit.
If $a$ is a unit, then $\ker\phi$ is trivial on $\hat E$. 
Otherwise, $\phi$ reduces to an inseparable isogeny of prime degree, and hence
$\ker\tilde\phi=0$ on~$\tilde E$. Therefore $\ker\phi$ lies on the formal group.
\end{proof}

\begin{proposition}
\label{omegapotord}
If $l=p$ and $E$ has potentially ordinary reduction,
then $\frac{\phi^*\omega'}{\omega}$ is either
a unit or $p\ctimes$unit. If $F/K$ is finite, then 
$\frac{\phi^*\omsub{E'\!}{F}}{\omsub{E}{F}}=\frac{\phi^*\omsub{E'\!}{K}}{\omsub{E}{K}}\,\ctimes\,$unit.
\end{proposition}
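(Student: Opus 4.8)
The plan is to deduce both assertions from the good ordinary case (Proposition \ref{omegaord}) and from the invariance of the quotient $\frac{\phi^*\omega'}{\omega}$ under finite base change, which is exactly the second assertion; so I would establish that invariance first and read off the first assertion afterwards. For the latter implication, choose a finite $L/K$ over which $E$ acquires good ordinary reduction. Proposition \ref{omegaord} makes $\frac{\phi^*\omsub{E'\!}{L}}{\omsub{E}{L}}$ a unit or $p\ctimes$unit in $\cO_L$; granting the invariance formula for $L/K$ and using $v_L=e_{L/K}v_K$ on $K^\times$ together with $v_L(p)=e_{L/K}v_K(p)$, the $K$-valuation of $\frac{\phi^*\omega'}{\omega}$ is forced to be $0$ or $v_K(p)$, i.e.\ $\frac{\phi^*\omega'}{\omega}$ is a unit or $p\ctimes$unit. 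Thus the whole content is the invariance formula.

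I would prove the invariance formula by cases on the reduction type. If $E/K$ has good ordinary (hence semistable) reduction, or additive tame potentially ordinary reduction, then Lemma \ref{delquo} gives it at once. By Theorems \ref{deltaisog} and \ref{potordkod} the only remaining case is $l=p=2$ with $E$ additive potentially good ordinary: for $l\ge 5$ the reduction is automatically tame, for $l=3$ such a curve is a quadratic twist of a curve with good reduction and so has tame type $\IZS$, and for $l=2$ Theorem \ref{potordkod} again forces $E$ to be a quadratic twist of a curve with good ordinary reduction, so $E$ becomes good ordinary over some quadratic $L=K(\sqrt d)$.

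For this wild case I would combine Proposition \ref{twistmain}(2) with transitivity. Over $L$ the curve is good ordinary and semistable, so Lemma \ref{delquo} gives the invariance formula for every finite extension of $L$; in particular for $FL/L$, where $F/K$ is an arbitrary finite extension and $FL=F(\sqrt d)$. The formula for $L/K$ holds by Proposition \ref{twistmain}(2): since $E$ is potentially ordinary it falls under neither exception, so $\Alphasub{\phi}{L}=\Alphasub{\phi}{K}^2$, which is exactly the invariance formula across the quadratic extension $L/K$. Transitivity then yields the formula for $FL/K$. Applying Proposition \ref{twistmain}(2) over the base field $F$ with the quadratic extension $FL=F(\sqrt d)$ — legitimate because $E/F$ is again potentially ordinary — gives the formula for $FL/F$; dividing the $FL/K$ and $FL/F$ relations expresses the ratio $\frac{\phi^*\omsub{E'\!}{F}/\omsub{E}{F}}{\phi^*\omsub{E'\!}{K}/\omsub{E}{K}}$ as a unit of $\cO_{FL}$, and since this ratio lies in $F^\times$ it is a unit of $\cO_F$, which is the formula for $F/K$. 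The crux is precisely this wild $l=p=2$ case, where Lemma \ref{delquo} is silent and minimal differentials behave unpredictably under ramified base change: the quadratic-twist identity of Proposition \ref{twistmain}(2), used over a varying base field and glued by the compositum argument, is what bridges it, the only care needed being that Proposition \ref{twistmain}(2) is available over the intermediate field $F$ and that configurations like $L\subseteq F$ are harmless.
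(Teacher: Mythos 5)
Your proof is correct, and it rests on the same pillars as the paper's own argument --- Proposition \ref{omegaord} for the good ordinary case, Lemma \ref{delquo} for the semistable/tame cases, and Proposition \ref{twistmain}(2) together with Theorem \ref{potordkod} for the twist case --- but the case decomposition and the level of detail differ. The paper splits by $p\ge 5$ versus $p\in\{2,3\}$: for $p\ge 5$ it invokes Lemma \ref{delquo} (tameness being automatic for $l\ge 5$) and Proposition \ref{omegaord}; for $p\in\{2,3\}$ it writes $E$ as a quadratic twist of a curve with good ordinary reduction (Theorem \ref{potordkod}) and simply cites Propositions \ref{omegaord} and \ref{twistmain}(2). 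You instead split by tame versus wild, which lets Lemma \ref{delquo} absorb the case $l=p=3$ as well (via the observation that a potentially ordinary curve at $l=3$ is a ramified quadratic twist of a curve with good reduction, hence of type $\IZS$, which is tame by Theorem \ref{deltaisog}), so that only $l=p=2$ requires the twisting machinery. Your compositum argument in that remaining case --- combining the $L/K$ formula from Proposition \ref{twistmain}(2), the $FL/L$ formula from Lemma \ref{delquo}, and the $FL/F$ formula from Proposition \ref{twistmain}(2) applied over the base field $F$, then descending the resulting unit from $\cO_{FL}$ to $\cO_F$ --- is where your write-up goes beyond the paper: Proposition \ref{twistmain}(2) by itself gives the base-change statement only for the quadratic extension $K(\sqrt d)/K$, whereas the second assertion of the proposition concerns an arbitrary finite $F/K$, and the paper leaves that extension step to the reader. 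So the two proofs are the same in substance, but yours makes explicit (and correctly handles, including the degenerate configuration $L\subseteq F$) a step the paper's citation glosses over.
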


\begin{proof}
When $p=2$ or $3$, Theorem \ref{potordkod} shows that $E$ is a quadratic twist
of a curve with good reduction. 
The result follows from Propositions \ref{omegaord} and~\ref{twistmain}(2).

When $p\ge 5$, Lemma \ref{delquo} shows that 
$\frac{\phi^*\omsub{E'\!}{K}}{\omsub{E}{K}} = \frac{\phi^*\omsub{E'\!}{F}}{\omsub{E}{F}}$
for any $F/K$. Taking $F$ to be the field where $E$ acquires good reduction,
we see that this quantity
is a unit or $p\ctimes$unit by
Proposition \ref{omegaord}.
%
%
%
%
\end{proof}

\begin{proposition}
\label{omegapotmult}
If $E$ has potentially multiplicative reduction, then
$$
  \frac{\phi^*\omega'}{\omega} = 
    \bigleftchoice{\text{unit}}{\text{if }v(j)=p\,v(j')}{\text{$p\,\ctimes$unit}}{\text{otherwise}}.
$$
In particular, $\frac{\phi^*\omsub{E'\!}{K}}{\omsub{E}{K}} = \frac{\phi^*\omsub{E'\!}{F}}{\omsub{E}{F}}\,\ctimes\,$unit
for every finite extension $F/K$.
\end{proposition}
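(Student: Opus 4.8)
The plan is to reduce everything to the case of \emph{split} multiplicative reduction, where the Tate parametrisation (Appendix \ref{s:apptate}) makes all the relevant quantities explicit, and then to descend: nonsplit reduction by an unramified base change, and additive potentially multiplicative reduction by a quadratic twist. Note first that when $l\ne p$ the prime $p$ is a unit, so the two alternatives in the statement coincide and the claim is immediate from Lemma \ref{lnep:omega}; hence all the content lies in the case $l=p$, which I assume from now on. The step I expect to be the main obstacle is the split multiplicative computation itself, where one must correctly match the two $K$-rational subgroups of order $p$ with the Tate parameters they produce and keep track of $\omega$ under $\phi^*$.

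First I treat multiplicative reduction (split or nonsplit). Passing to a finite unramified extension $K'/K$, I may assume that both $E$ and $E'$ are split multiplicative Tate curves; this changes neither the minimal differentials, nor the value of $\frac{\phi^*\omega'}{\omega}$ (as $\phi$ is defined over $K$ and $K'/K$ is unramified), nor the valuations of $j$ and $j'$. Writing $E=E_q$ with $E_q(\bar K)=\bar K^\times/q^{\Z}$, one has $v(q)>0$, $v(j)=-v(q)$, and $\omega=\frac{du}{u}$ up to a unit. There are two order-$p$ subgroups giving $K$-rational quotients: the kernel $\mu_p$, for which $\phi$ is $u\mapsto u^p$ and $E'=E_{q^p}$; and the kernel generated by a $p$-th root of $q$, for which $\phi$ is induced by the identity on $\bar K^\times$ and $E'=E_{q^{1/p}}$. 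In the first case $v(q')=p\,v(q)$, so $p\,v(j)=v(j')$, and $\phi^*\frac{du'}{u'}=\frac{d(u^p)}{u^p}=p\,\frac{du}{u}$, giving $\frac{\phi^*\omega'}{\omega}=p\ctimes$unit. In the second case $v(q')=v(q)/p$, so $v(j)=p\,v(j')$, and $\phi^*\frac{du'}{u'}=\frac{du}{u}$, giving $\frac{\phi^*\omega'}{\omega}$ a unit. This is precisely the asserted dichotomy for multiplicative $E$.

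Next I treat additive $E$ with potentially multiplicative reduction. Then $E$ is a quadratic twist $E=(E_0)_d$ of a multiplicative curve $E_0$ with the same $j$-invariant, and $\phi$ corresponds to the twisted isogeny $\phi_0$. Since $E$ is potentially multiplicative, neither exceptional case of Proposition \ref{twistmain}(2) (both of which concern potentially supersingular reduction) applies, so $\Alphasub{\phi}{K}=\Alphasub{\phi_0}{K}$; thus $\frac{\phi^*\omega'}{\omega}$ and the corresponding quantity for $E_0$ have the same valuation, and hence agree up to units. As twisting preserves $j$-invariants, the condition $v(j)=p\,v(j')$ holds for $E,E'$ if and only if it holds for $E_0,E_0'$, so the multiplicative case just treated gives the claim.

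Finally, the ``in particular'' statement follows by applying the Proposition over $F$. The curve $E/F$ is again potentially multiplicative, and for the normalised valuation of $F$ the condition $v(j)=p\,v(j')$ is equivalent to the one over $K$, since both $v(j)$ and $v(j')$ are multiplied by the ramification index $e(F/K)$. Hence $\frac{\phi^*\omsub{E'\!}{K}}{\omsub{E}{K}}$ and $\frac{\phi^*\omsub{E'\!}{F}}{\omsub{E}{F}}$ fall into the same alternative---both units, or both $p\ctimes$unit---and therefore differ by a unit in $\cO_F$.
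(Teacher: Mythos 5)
Your proof is correct, but the key computation is done by a genuinely different method than the paper's. The outer reductions coincide: the case $l\ne p$ via Lemma \ref{lnep:omega}, the classification of $p$-isogenies of a Tate curve via Theorem \ref{tatecurve}, and the transfer to the additive potentially multiplicative case via Proposition \ref{twistmain}(2) (the paper also uses quadratic twisting, by $-c_6$, to handle nonsplit multiplicative reduction, where you instead pass to an unramified extension --- both are fine, since minimal differentials, the unit/non-unit dichotomy, and the valuations of $j,j'$ are insensitive to unramified base change). The difference is in the split multiplicative case itself. The paper never computes $\phi^*\omega'$ directly: it uses the identity $\frac{\phi^*\omega'}{\omega}\cdot\frac{(\phi^t)^*\omega}{\omega'}=p$ to reduce to the case $E=E^{(q^p)}$, $E'=E^{(q)}$ with $\phi$ induced by the identity on $K^\times$, notes that there $|\ker\phi|=p$, $|\coker\phi|=1$ on $K$-points and $\frac{c}{c'}=p$, and concludes $\Alphasub{\phi}{K}=1$ from the point-counting formula of Lemma \ref{alphalead}(2). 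You instead compute the pullback of the N\'eron differential directly on the Tate uniformization: $u\mapsto u^p$ pulls $\frac{du'}{u'}$ back to $p\,\frac{du}{u}$, while the identity map pulls it back to $\frac{du}{u}$; this treats both kernels symmetrically and needs neither the duality trick nor any kernel/cokernel/Tamagawa bookkeeping. The price is that you invoke the fact that the minimal differential of a Tate curve corresponds to $\frac{du}{u}$ up to a unit (equivalently, that the standard Tate model is minimal and the parametrization respects differentials); this is standard (\cite{Sil2} \S V.3) but goes beyond what the paper's Theorem \ref{tatecurve} records, so it should be cited explicitly. Your handling of the ``in particular'' clause --- apply the dichotomy over $F$ and note that the condition $v(j)=p\,v(j')$ is stable under scaling the valuation by $e(F/K)$ --- is exactly what the paper leaves implicit.
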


\begin{proof} 
Note that if $l\ne p$, the result follows from Lemma \ref{lnep:omega}.
Because both the $j$-invariant and $\alpha$ are unchanged under quadratic twists 
(Proposition~\ref{twistmain}), we may assume
that $E$ has split multiplicative reduction. 

By the theory of the Tate curve (Theorem \ref{tatecurve}), 
the pair $E, E'$ is $E^{(q^p)}, E^{(q)}$
(in some order), with $q\in \m_K$. In particular, either
$v(j)=p\,v(j')$ or $v(j')=p\,v(j)$. Because
$$
  \frac{\phi^*\omega'}{\omega} 
  \frac{(\phi^t)^*\omega}{\omega'}  = 
  \frac{\phi^*\omega'}{\omega} 
  \frac{\phi^*(\phi^t)^*\omega}{\phi^*\omega'}  =  
  \frac{\phi^*\omega'}{\omega} \frac{p\,\omega} {\phi^*\omega'} =
  p,
$$
the claim for $\phi$ is equivalent to that for $\phi^t$. Swapping $E$ and $E'$
if necessary, assume that $E=E^{(q^p)}, E'=E^{(q)}$, 
in which case $\phi$ is given by
$$
  \phi: E(K)=K^\times/(q^p)^\Z \lar K^\times/q^\Z=E'(K),
$$
induced by the identity map on $K^\times$. Here $|\ker\phi|=p$,
$|\coker\phi|=1$ on $E(K)$, and $\frac{c}{c'}=\frac{v(q^p)}{v(q)}=p$. 
By Lemma \ref{alphalead}, the quotient $\frac{\phi^*\omega'}{\omega}$
is a unit.
\end{proof}


\section{Discriminants and Kodaira types II}
\label{s:disckod2}

\begin{theorem}
\label{delmain}
\par\noindent
\begin{enumerate}
\item
If $E$ has potentially good reduction, and either $l\ne p$ or the
reduction is good or potentially ordinary, then $\delta=\delta'$.
\item
If $E$ has multiplicative reduction, 
then $\frac{\delta}{\delta'}=\frac{v(j)}{v(j')}=p^{\pm 1}$.
\item
If $E$ has potentially multiplicative reduction, 
then 
$$
  \delta-\delta'=v(j')-v(j)=
    \bigleftchoice {\frac{1-p}{p}v(j)}{\text{if }v(j)=pv(j')}{(p-1)v(j)}{\text{if }v(j')=pv(j)}.
$$
\end{enumerate}
\end{theorem}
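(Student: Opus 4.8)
The plan is to treat the three parts separately, but in each case to reduce the computation of $\delta,\delta'$ to a finite extension $F/K$ over which the curves acquire good or (split) multiplicative reduction, exploiting the base-change stability of the discriminant ratio. The common engine is Lemma \ref{delquo}: for the reduction classes in question the differential ratio $\frac{\phi^*\omsub{E'\!}{K}}{\omsub{E}{K}}$ agrees with $\frac{\phi^*\omsub{E'\!}{F}}{\omsub{E}{F}}$ up to a unit, so by the equivalence in that lemma the discriminant ratio satisfies $\frac{\Delta_{E/K}}{\Delta_{E'/K}}=\frac{\Delta_{E/F}}{\Delta_{E'/F}}\times\text{unit}$ for every finite $F/K$. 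Taking the normalised valuation $v_F$ of this identity converts it into a numerical relation between $\delta,\delta'$ and the invariants over $F$, in which the ramification index $e(F/K)$ enters as an overall factor that ultimately cancels. It is worth noting that Theorems \ref{deltaquo}--\ref{deltaquo3} only yield the congruence $\delta'\equiv p\delta \bmod 12$ of Theorem \ref{deltaisog}; the exact relations here require this genuinely local input.

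For part (1), if $E$ has good reduction then, since the conductor is an isogeny invariant ($f=f'$), so does $E'$, giving $\delta=\delta'=0$. Otherwise $E$ is additive with potentially good reduction, and I would establish the discriminant relation $\frac{\Delta_{E/K}}{\Delta_{E'/K}}=\frac{\Delta_{E/F}}{\Delta_{E'/F}}\times\text{unit}$ for all finite $F/K$: directly from Lemma \ref{delquo} when $l\ne p$, and from the differential relation of Proposition \ref{omegapotord} together with the equivalence in Lemma \ref{delquo} when $l=p$ and the reduction is potentially ordinary (this covers the wild case too). Choosing $F$ so that both $E$ and $E'$ have good reduction makes $\Delta_{E/F},\Delta_{E'/F}$ units, whence $\frac{\Delta_{E/K}}{\Delta_{E'/K}}$ is a unit and $\delta=\delta'$.

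For part (2), I would use that for multiplicative reduction (Kodaira type $\In{n}$) the minimal discriminant satisfies $\delta=-v(j)$ and $\delta'=-v(j')$. After reducing the nonsplit case to the split case by the unramified quadratic twist, under which $j$ and $\delta$ are unchanged, the theory of the Tate curve (Theorem \ref{tatecurve}) identifies the pair $E,E'$ with $E^{(q)},E^{(q^p)}$ in some order, $v(q)>0$, so the two parameters differ by a $p$-th power. Hence $v(j)/v(j')=p^{\pm1}$ and therefore $\delta/\delta'=v(j)/v(j')=p^{\pm1}$.

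For part (3), $E$ is additive with potentially multiplicative reduction, hence the quadratic twist of a split multiplicative curve by a necessarily ramified extension $F=K(\sqrt d)$, so $e(F/K)=2$. Since $j,j'$ are twist invariant, the ratio $v(j)/v(j')$ is computed over $F$ by part (2), giving $v(j)/v(j')=p^{\pm1}$. For the discriminant statement I would invoke Proposition \ref{omegapotmult} with Lemma \ref{delquo} to get $\frac{\Delta_{E/K}}{\Delta_{E'/K}}=\frac{\Delta_{E/F}}{\Delta_{E'/F}}\times\text{unit}$, apply $v_F$, and use $v_F=2v$ on elements of $K$ together with $\delta_F=v_F(\Delta_{E/F})=-v_F(j)$ and $\delta_F'=v_F(\Delta_{E'/F})=-v_F(j')$ from part (2). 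This yields $2(\delta-\delta')=\delta_F-\delta_F'=v_F(j')-v_F(j)=2\bigl(v(j')-v(j)\bigr)$, so $\delta-\delta'=v(j')-v(j)$; substituting $v(j)=pv(j')$ or $v(j')=pv(j)$ gives the two displayed formulas. The step I expect to be most delicate is precisely this bookkeeping: the minimal discriminants over $K$ are in general not minimal over $F$, so the clean cancellation of $e(F/K)=2$ rests entirely on the base-change stability supplied by Lemma \ref{delquo}, and not on any naive comparison of the minimal models over the two fields.
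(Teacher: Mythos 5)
Your proposal is correct and follows essentially the same route as the paper: the same engine (Lemma \ref{delquo} combined with the base-change stability of $\phi^*\omega'/\omega$ from Lemma \ref{lnep:omega}, Propositions \ref{omegapotmult} and \ref{omegapotord}), and the same reduction to good or split multiplicative reduction over an extension $F/K$, where the Tate curve gives the exact values. The only cosmetic differences are that you cite Proposition \ref{omegapotord} where the paper re-runs the quadratic-twist argument of Proposition \ref{twistmain} for $p=2,3$, and you handle nonsplit multiplicative reduction by an unramified quadratic twist rather than folding it into the general potentially multiplicative case.
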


\begin{proof}
If $E$ has good reduction, then $\delta=\delta'=0$. 
If $E$ has split multiplicative reduction, then 
$E$ and $E'$ are Tate curves with parameters $q$ and $q^p$, in some order
(Theorem \ref{tatecurve}). 
So $\delta=-v(j)$ and $\delta'=-v(j')$ are $v(q)$ and $pv(q)$, in some order.
Thus (1) holds in the good reduction case, and (2), (3) in the 
split multiplicative case.

\vskip -1mm

If either $l\ne p$ or $E$ is potentially multiplicative,
then $\frac{\phi^*\omsubt{E'\!}{K}}{\omsubt{E}{K}}=\frac{\phi^*\omsubt{E'\!}{F}}{\omsubt{E}{F}}$ for
any $F/K$, by Lemma \ref{lnep:omega} and Proposition \ref{omegapotmult}.
Taking $F$ to be a field where $E$ has good or split multiplicative reduction,
we find that the claim for $E/F$ implies that for $E/K$, by Lemma \ref{delquo}.

We are left with the case that $E$ has additive potentially ordinary 
reduction with $l=p$. If $p>3$, the claim is proved in Corollary \ref{delpotord5}.
If $p=2, 3$, then $E/K$ is a quadratic twist of a curve $E_d/K$ with good ordinary reduction
(Theorem \ref{potordkod}). 
Let $F=K(\sqrt d)$ be the corresponding quadratic extension. 
In the notation of Proposition \ref{twistmain} we have
$\Alphasub{\phi}{K}=\Alphasub{\phi_d}{K}$ and,
since the minimal model of $E_d$ stays minimal in $F/K$ and $E/F\iso E_d/F$,
also $\Alphasub{\phi}{F}=\Alphasub{\phi_d}{K}^2$. So $\Alphasub{\phi}{F}=\Alphasub{\phi}{K}^2$.
In other words, $\frac{\phi^*\omsubt{E'\!}{K}}{\omsubt{E}{K}}=\frac{\phi^*\omsubt{E'\!}{F}}{\omsubt{E}{F}}$,
and the
\linebreak\par\vskip-4mm\noindent
claim follows, again by Lemma \ref{delquo}.
%
%
%
\end{proof}

\begin{remark}
Note that in the potentially good case, the formulae 
$\delta=\delta'$ (Theorem \ref{delmain}) 
and $\delta'\equiv p\delta\mod 12$ (Theorem \ref{deltaisog}) 
do not contradict each other. The reason is that the possible reduction
types are restricted in the potentially ordinary case, see Theorem \ref{potordkod}.
%
\end{remark}

\begin{remark}
In the potentially supersingular case 
the formulae in Proposition \ref{omegapotord} 
and Theorem \ref{delmain}(1) may not hold.
For example, consider the 5-isogenous elliptic curves $E=50b1$, $E'=50b3$.
Their reduction types over $\Q_5$ are $\II$ and $\IIS$ respectively,
so $\delta\ne\delta'$. Also, over $\Q_5(\sqrt 5)$ the reduction types become 
$\IV$ and $\IVS$, and $\phi^*\omega'/\omega=\sqrt{5}$
(computed as in Lemma \ref{delquo} from the minimal discriminants), 
which is neither a unit nor $5\ctimes$unit.
\end{remark}


\begin{theorem}
\label{lnep:kod}
\par\noindent
\begin{enumerate}
\item
If $E$ has potentially good reduction and $p\ne l$, then
the Kodaira types of $E$ and $E'$ are the same.
\item 
If $E$ has potentially good ordinary reduction and $p=l$, then 
the Kodaira types of $E$ and $E'$ are the same.
\item
If $E$ has tame potentially good supersingular reduction and $p=l$, then 
$E$ and $E'$ have opposite Kodaira type
$(\II\leftrightarrow\IIS$, $\III\leftrightarrow\IIIS$, $\IV\leftrightarrow\IVS$,
 $\IZS\leftrightarrow\IZS)$.
\item
If $E$ has multiplicative reduction, then the Kodaira 
type is $\In{n}$ for~$E$, and either $\In{pn}$ or $\In{n/p}$ for $E'$,
corresponding to $v(j')\!=\!pv(j)$ and $pv(j')\!=\!v(j)$.
\item
If $E$ has additive potentially multiplicative reduction,  
then the Kodaira type is $\InS{n}$ for $E$ and $\InS{n'}$ for $E'$,
where either
$$
  \qquad\qquad
  v(j')\!=\!pv(j)  
    \quad\>\>\text{and}\quad\>\>
  n'\!=\!pn \!-\! 4(a\!-\!1)(p\!-\!1)  \!=\! n\!-\!v(j)(p\!-\!1)
$$
or vice versa (swap $n\leftrightarrow n', j\leftrightarrow j'$); here $a$ 
is the conductor exponent of the quadratic character of $K(\sqrt{-c_6})/K$
(it is $1$ if $l\ne 2$), where $c_6$ is the standard invariant 
of $E$ as in {\rm\cite{Sil1} \S III.1}. 
\end{enumerate}
\end{theorem}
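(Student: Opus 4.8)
The plan is to recover Kodaira types from two isogeny-stable quantities: the conductor exponent, for which $f=f'$, and the number of components $m$, recovered from $\delta$ via Ogg's formula $\delta=f+m-1$ together with the $\delta$-information of Theorem \ref{delmain} and Corollary \ref{delpotord5}. For (1) and (2), good reduction is isogeny-invariant by N\'eron--Ogg--Shafarevich, so if $E$ has type $\IZ$ then so does $E'$; otherwise $E$, and hence $E'$ (as $f=f'\ge 2$), has additive potentially good reduction, where the Kodaira symbol is determined by $m$ alone, since $\II,\III,\IV,\IZS,\IVS,\IIIS,\IIS$ have $m=1,2,3,5,7,8,9$, pairwise distinct. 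Theorem \ref{delmain}(1) gives $\delta=\delta'$ in precisely the two relevant regimes (potentially good with $l\ne p$, and potentially good ordinary with $l=p$), so Ogg forces $m=m'$ and hence equal types. Part (3) is a direct restatement of Corollary \ref{delpotord5}.

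For (4) I would invoke the theory of the Tate curve: a split multiplicative curve with parameter $q$ has type $\In{v(q)}$ with $v(q)=-v(j)$, and the symbol $\In{n}$ is insensitive to the split/nonsplit distinction. By Theorem \ref{tatecurve} the pair $E,E'$ are Tate curves with parameters $q,q^p$ in some order, equivalently $\frac{v(j)}{v(j')}=p^{\pm1}$ by Theorem \ref{delmain}(2). Writing $E$ as type $\In{n}$, so that $v(j)=-n$, it follows that $E'$ has type $\In{pn}$ or $\In{n/p}$ according as $v(j')=pv(j)$ or $pv(j')=v(j)$.

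Part (5) is where the real work lies. An additive potentially multiplicative curve has type $\InS{n}$ with $n\ge1$, and likewise $E'$ has type $\InS{n'}$, each with $m=n+5$ components. Combining $f=f'$ with Ogg gives $\delta-\delta'=m-m'=n-n'$, while Theorem \ref{delmain}(3) evaluates $\delta-\delta'=(p-1)v(j)$ when $v(j')=pv(j)$; this yields the second formula $n'=n-v(j)(p-1)$ at once, the reversed case being symmetric. The step I expect to be the \emph{main obstacle} is the agreement of the two displayed expressions for $n'$, which is equivalent to the single-curve identity $n=-v(j)+4(a-1)$ relating the index of $\InS{n}$ to $v(j)$ and to the conductor $a$ of $K(\sqrt{-c_6})/K$. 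To establish it I would realise $E$ as the ramified quadratic twist, over $F=K(\sqrt{-c_6})$, of a Tate curve $E_q$ with $v(q)=-v(j)$; over $F$ the curve becomes split multiplicative of type $\In{-2v(j)}$, so that $\delta_{E/F}=-2v(j)$. Comparing $\delta_{E/F}$ with $\delta$ through the discriminant--differential relation of Lemma \ref{delquo}, using that the minimal differential changes by $v_F$-valuation $a$ under a ramified twist of conductor $a$, gives $\delta=6a-v(j)$; together with the conductor identity $f=2a$ and Ogg (so $\delta=2a+n+4$) this produces $n=4(a-1)-v(j)$. The only genuine subtlety is the wild case $l=2$, where $a\in\{2,3\}$ contributes the correction $4(a-1)(p-1)$; for $l\ne2$ one has $a=1$ and both formulas collapse to $n'=pn$.
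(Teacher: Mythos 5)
Your treatment of (3) and (4) is correct and coincides with the paper's one-line proofs (Corollary \ref{delpotord5} and the Tate curve), and your reduction of (1), (2) to $m=m'$ via $f=f'$, Ogg's formula and Theorem \ref{delmain}(1) is also exactly the paper's starting point. The genuine gap is your claim that for an additive potentially good curve the Kodaira symbol is determined by $m$ alone. This holds for $l\ge 5$, and for $l=3$ only because $\InS{n}$ is necessarily potentially multiplicative there (a fact you would also need to record); but it is \emph{false} for $l=2$: a curve with wild potentially good reduction over a $2$-adic field can have type $\InS{n}$, so $m$ fails to separate the pairs $\{\InS{2},\IVS\}$, $\{\InS{3},\IIIS\}$, $\{\InS{4},\IIS\}$ (all of $m=7,8,9$ occur twice). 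This is where almost all of the paper's work goes. In case (1) (so $p\ne l=2$, hence $p$ odd) the paper passes to $K^{nr}$ and notes that among these pairs only $\InS{n}$ has Tamagawa number with $2$-part equal to $4$, while an isogeny of odd degree induces an isomorphism on the $2$-parts of $E/E_0$ and $E'/E'_0$, ruling out the mixed pairs. In case (2) the only wild situation is $p=l=2$ (Theorem \ref{potordkod}), where this parity trick is unavailable; the paper instead excludes $\{\InS{2},\IVS\}$ because $\IVS$ is tame when $l=2$ (Theorem \ref{deltaisog}) while $E,E'$ are wild, excludes $\{\InS{3},\IIIS\}$ by a parity contradiction ($6\mid\delta$ and $m=8$ force $f$ odd by Ogg, whereas $f=2f_\chi$ is even), and excludes $\{\InS{4},\IIS\}$ using $c/c'\in\{1,2,\frac12\}$ from the last case of Theorem \ref{tammain}. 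Without arguments of this kind your proof of (1) and (2) is incomplete precisely in the hardest case.

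In (5), your derivation of $n'=n-(p-1)v(j)$ from $f=f'$, Ogg and Theorem \ref{delmain}(3) is fine, and you correctly isolate the remaining issue as the single-curve identity $n=4(a-1)-v(j)$. But your proposed proof of that identity rests on the assertion that the minimal differential changes by $v_F$-valuation exactly $a=f_\chi$ under the ramified quadratic twist, which you use without proof; for $l=2$ (where $a\in\{2,3\}$) this assertion is, via Ogg's formula, $f=2f_\chi$ and the discriminant--differential relation, \emph{equivalent} to the identity you are trying to establish, i.e.\ to Lorenzini's theorem. So the proposed reduction does not actually reduce the difficulty; it restates it. The paper handles this by quoting Lorenzini's result as Theorem \ref{lorthm} and twisting by $-c_6$: the twists $E_{-c_6}$, $E'_{-c_6}$ are split multiplicative of types $\In{\nu}$, $\In{p\nu}$ with $\nu=-v(j)$, and Theorem \ref{lorthm} converts these into $\InS{\nu+4a-4}$, $\InS{p\nu+4a-4}$, from which both displayed formulas follow by algebra. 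If you replace your differential-valuation claim by a citation of Theorem \ref{lorthm}, your part (5) closes and becomes essentially the paper's proof.
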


\begin{proof}
Write $f, f'$ for the conductor exponents of $E$ and $E'$, and 
$m, m'$ for the number of connected components of the special fibre of their
minimal regular models. Because $\phi$ induces an isomorphism 
between the $\ell$-adic Tate modules of $E$ and $E'$ for $\ell\ne l,p$,
we have $f=f'$. Recall that $\delta=f+m-1$ and $\delta'=f'+m'-1$ by 
Ogg's formula \cite{Sil2} IV.11.1.

(1) By Theorem \ref{delmain}, $\delta=\delta'$ and so $m=m'$.
If $l\ne 2$, then from the reduction type table  
\cite{Sil2} IV.9, Table 4.1 we see
that the Kodaira type in the additive potentially good case is determined by $m$,
so they are the same for $E$ and $E'$.
(Note that $\InS n$ is necessarily potentially 
multiplicative even when $l=3$.)

Suppose $l=2$. Then $m$ almost determines the reduction type,
except for the pairs $\{\InS 2,\IVS\}$, $\{\InS 3,\IIIS\}$ and $\{\InS 4,\IIS\}$.
Passing to the maximal unramified extension if necessary, we see that 
$\InS 0$ and $\InS n$ are the only reduction types with 
(2-part of) the local Tamagawa number equal to 4. 
Since the 2-part of the Tamagawa number is invariant under $\phi$
(as $p=\deg\phi$ is odd, $\phi$ induces an isomorphism between the 2-parts
of $E/E_0$ and $E'/E'_0$),
the Kodaira types must be the same.

(2) In the tame case, this is Corollary \ref{delpotord5}.
By Theorem \ref{potordkod}, the only wild case is 
when $p=l=2$ and $E, E'$ are quadratic twists of curves with good 
ordinary reduction by some character $\chi$. 
Here $\delta=\delta'$ by Theorem \ref{delmain}(1), 
so $m=m'$ as in (1). Again as in (1), the Kodaira types of $E$ and $E'$ are 
the same, except possibly 
for 3 pairs of cases $\{\InS 2,\IVS\}$, $\{\InS 3,\IIIS\}$ and $\{\InS 4,\IIS\}$.
We claim that none of these can occur. For the first one, the reduction
type $\IVS$ is tame by Theorem \ref{deltaisog}. For the second one,
$m=8$ and $6|\delta$ (since $E$ acquires good reduction after a quadratic
extension), so $f$ is odd by Ogg's formula; however $f$ is twice the 
conductor exponent of $\chi$, contradiction. In the last case, 
pass to the maximal unramified extension as in (1). Then the Tamagawa 
numbers of $E$ and $E'$ become 1 and 4 (\cite{Sil2} IV.9, Table 4.1),
but their quotient is $1,2$ or $\frac 12$ by the very last case 
of Theorem \ref{tammain}. (The proof of this case does not use the present
theorem.)

(3) This is a special case of Corollary \ref{delpotord5}.


(4) This follows from the theory of the Tate curve (Theorem \ref{tatecurve}).

(5) 
The quadratic twists $E_{-c_6}, E'_{-c_6}$ have split 
multiplicative reduction and are $p$-isogenous 
(Lemma \ref{quadmult}, Remark \ref{twistisog}).
If $v(j')=pv(j)$, then these twists have Kodaira types
$\In{\nu}$, $\In{p\nu}$ with $-\nu=v(j_{E_{-c_6}})=v(j)$ (Theorem \ref{tatecurve}).
By Theorem \ref{lorthm}, $E$ and $E'$ have Kodaira types $\InS{n}$ and $\InS{n'}$
with $n=\nu+4a-4$ and $n'=p\nu+4a-4$. Clearly
$$
  n' = p(n-4a+4)+4a-4 = pn - 4(p-1)(a-1).
$$
Because $v(j)=-\nu=-n+4a-4$, also
$$
  n' = pn - (p-1)(4a-4) = pn-(p-1)(v(j)+n) = n - (p-1)v(j).
$$
If, on the other hand, $v(j)\!=\!pv(j')$, swap $E$ and~$E'$. 
%
%
%
%
%
\end{proof}



\section{Tamagawa numbers}
\label{s:tam}


\begin{theorem}
\label{tammain}
If $E$ is semistable, then the ratio of Tamagawa numbers $\frac{c}{c'}$~is 
\begin{itemize}
\item $1$ if $E$ has good reduction.
\item $\frac{\delta}{\delta'}=\frac{v_K(j(E))}{v_K(j(E'))}=p^{\pm 1}$ 
if $E$ has split multiplicative reduction.
\item if $E$ has nonsplit multiplicative reduction: 
\begin{itemize}
\item[$\bullet$] $1$ if $p\ne 2$, or if both $\delta$ and $\delta'$ are even,
\item[$\bullet$] $2$ if $p=2$ and $\delta'$ is odd,
\item[$\bullet$] $\frac12$ if $p=2$ and $\delta$ is odd.
\end{itemize}
%
\end{itemize}
If $E$ has additive reduction and $p>3$, then $c/c'=1$.\\
If $E$ has additive reduction and $p=3$, then $c/c'$ is
\begin{itemize}
\item $1$ if $l\ne 3$, unless $E$ has type $\IV, \IVS$ and 
  $\mu_3\not\subset K$. In this exceptional case, 
\begin{itemize}
\item[$\bullet$] $3$ if $E(K)[3]\ne 0$,
\item[$\bullet$] $\frac 13$ if $E(K)[3]=0$.
\end{itemize}
\item $1$ if $l=3$ and $E$ 
has Kodaira type $\III,\IIIS,\IZS,\InS n$ (equivalently, 
$E$ does not have wild potentially supersingular reduction).
\end{itemize}
If $E$ has additive reduction and $p=2$, then $c/c'$ is
\begin{itemize}
\item $1$ if $l\ne 2$ and $E$ is not of type $\IZS$ or $\InS n$.
\item $1$ if $l\!=\!2$ and $E$ has tame potentially good reduction 
  (i.e. type~$\IV,\IVS$\!).
\item if a) $l\ne 2$ and $E$ has type $\IZS$ or $\InS n$, or b)
$l=2$ and $E$ does not have potentially 
supersingular reduction, 
\begin{itemize}
\item[$\bullet$] $1$ if $\frac{\Delta}{\Delta'}$ is a norm in $F/K$,
\item[$\bullet$] $\frac12$ if $\Delta'$ is a norm in $F/K$ and $\Delta$ is not,
\item[$\bullet$] $2$ if $\Delta$ is a norm in $F/K$ and $\Delta'$ is not,
\end{itemize}
where $F/K$ is a quadratic extension such that $E/F$ has good 
or split multiplicative reduction.
\end{itemize}
\end{theorem}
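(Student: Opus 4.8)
The plan is to combine two tools. The first is the exact-sequence identity of Lemma~\ref{alphalead}(2), which I rewrite as
\[
  \frac{c}{c'}=\Alphasub{\phi}{K}\,
  \frac{|\ker(\phi\colon E(K)\!\to\!E'(K))|}{|\coker(\phi\colon E(K)\!\to\!E'(K))|},
\]
where $\Alphasub{\phi}{K}$ is already known in every relevant case from Lemma~\ref{lnep:omega} and Propositions~\ref{omegaord}, \ref{omegapotmult}, \ref{omegapotord}. The second is the induced map on component groups: writing $\Phi_E,\Phi_{E'}$ for the geometric component groups of the N\'eron models, $\phi$ induces a Frobenius-equivariant homomorphism $\phi_*\colon\Phi_E\to\Phi_{E'}$ with $\phi^t_*\phi_*=[p]$, i.e.\ multiplication by $p$, so that $c=|\Phi_E^{\mathrm{Frob}}|$ and $c'=|\Phi_{E'}^{\mathrm{Frob}}|$. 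I treat the semistable and additive cases separately, using the Tate curve and the first tool for the former and the component-group map for the latter.

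For the semistable cases I argue directly. Good reduction gives $c=c'=1$. For split multiplicative reduction the Tate curve (Theorem~\ref{tatecurve}) realises $E,E'$ with parameters $q,q^p$ in some order, so $c=v(q)$ and $c'=v(q^p)$ up to a swap, giving $c/c'=v(j)/v(j')=\delta/\delta'=p^{\pm1}$. For nonsplit multiplicative reduction of type $\In{n}$ the geometric component group is $\Z/n$ with Frobenius acting by $-1$, whence $c=\gcd(2,n)$; feeding in $\delta'/\delta=p^{\pm1}$ from Theorem~\ref{delmain}(2) and comparing the parities of $\delta,\delta'$ produces the values $1$, $2$, $\tfrac12$ exactly as stated.

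The additive cases I organise around $\phi_*$. Since $\phi^t_*\phi_*=[p]$, whenever $\gcd(p,|\Phi_E|)=1$ multiplication by $p$ is invertible on $\Phi_E$, so $\phi_*$ is an isomorphism of Frobenius-modules and $c=|\Phi_E^{\mathrm{Frob}}|=|\Phi_{E'}^{\mathrm{Frob}}|=c'$. As $|\Phi|\in\{1,2,3,4\}$ for additive reduction, this settles \emph{all} of $p>3$, every additive type except $\IV,\IVS$ when $p=3$, and every additive type except those with $|\Phi|\in\{2,4\}$ when $p=2$. The types $\III,\IIIS$ (with $|\Phi|=2$) are handled by a separate remark: $\operatorname{Aut}(\Z/2)$ is trivial, so $c=2$ is forced by the Kodaira type, which $E$ and $E'$ share (Theorem~\ref{lnep:kod}(1)); hence $c=c'$ again.

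Two genuinely exceptional families remain, and this is where the work lies. \emph{(i) $p=3$, type $\IV$ or $\IVS$, $\mu_3\not\subset K$.} Then $l\ne3$, so $\Alphasub{\phi}{K}=1$ and $c/c'=|\ker\phi(K)|/|\coker\phi|$. Since the reduction is additive and $l\ne3$, the group $E_0(K)$ is pro-$l$ and has no $3$-torsion, so $E(K)[3]$ embeds in $\Phi_E\cong\Z/3$ and one checks $c=3\iff E(K)[3]\ne0$; the hypothesis $\mu_3\not\subset K$, via the Cartier duality $\ker\phi^t=(\ker\phi)^\vee$ and the unipotent inertia action on $E[3]$ (whose unique stable line is $\ker\phi$), forces exactly one of $E,E'$ to carry a rational $3$-torsion point, pinning $c/c'$ to $3$ or $\tfrac13$ according to $E(K)[3]$. \emph{(ii) $p=2$, types $\IZS,\InS{n}$ (any $l$), or $l=2$ with $E$ not potentially supersingular.} Here $E$ becomes good or split multiplicative over a quadratic $F=K(\sqrt d)$, and I run the twist machinery of Remark~\ref{twistisog}: Lemma~\ref{kercoker4} balances the factors $\Alphasub{}{}\,c'/c$ over $K$, over $F$, and for the twist $E_d$, against the norm indices $|E(K)/NE(F)|$ and $|E'(K)/NE'(F)|$. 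The factors over $F$ and for $E_d$ are already understood (good or split multiplicative), so $c/c'$ reduces to the ratio of these two norm indices. \emph{The main obstacle} is the final identification of $|E(K)/NE(F)|$ with the norm class of the minimal discriminant: using the explicit $2$-isogeny model $y^2=x^3+ax^2+bx$ of Theorem~\ref{deltaquo2}, with $\Delta=16b^2(a^2-4b)$ and $\Delta'=256\,b(a^2-4b)^2$, I would compute each norm index by local class field theory and match it to ``$\Delta$ (resp.\ $\Delta'$) is a norm in $F/K$'', giving the trichotomy $1,\tfrac12,2$. Tracking minimal discriminants through the twist and evaluating the norm residues is delicate precisely when $l=2$, where $F/K$ may be wildly ramified, and I expect this wild case to be the hardest part of the argument.
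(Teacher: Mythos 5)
Your overall architecture --- Tate curves for the semistable cases, a prime-to-$p$ invariance argument for most additive types, and the quadratic-twist/Lemma~\ref{kercoker4} machinery for the exceptional ones --- matches the paper's, and your semistable, $p>3$, and nonsplit-multiplicative computations are correct. But there is a genuine gap exactly where the theorem is hard, namely the norm-criterion case for $p=2$. You reduce $c/c'$ to a ratio of norm indices and then say you ``would compute each norm index by local class field theory''; that computation is never carried out, and it is precisely the content of the trichotomy $1,\tfrac12,2$. Moreover, your application of Lemma~\ref{kercoker4} is oriented so that the indices you must evaluate are $|E(K)/NE(F)|$ and $|E'(K)/NE'(F)|$ for the \emph{additive} curves, which is the hard direction. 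The paper applies the lemma to the twisted pair $E_d,E'_d$ instead, so that the norm indices appearing are those of curves with good ordinary or split multiplicative reduction; these are known by Kramer--Tunnell (\cite{KT} Prop.~8.6: for good ordinary reduction the index is $4$ or $2$ according to whether $\Delta_{E_d/K}$ is a norm from $F$; \cite{KT} Prop.~4.1: for split multiplicative reduction it is $2$ or $1$ according to whether the Tate parameter is a norm), and Lemma~\ref{discquadtwist} converts the criterion on $\Delta_{E_d}$, resp.\ on $q$, into the stated criterion on $\Delta$, since they differ by norms. Without this input (or a full reproof of it, which is essentially redoing Kramer--Tunnell's wildly ramified computations) the $p=2$ additive case remains unproven --- as you yourself concede in calling it ``the main obstacle''.

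There are also two smaller logical holes in your component-group step. First, $\phi^t_*\phi_*=[p]$ being invertible on $\Phi_E$ gives only injectivity of $\phi_*$; surjectivity needs $p$ invertible on $\Phi_{E'}$ as well. As written, nothing in your argument excludes, say, $p=3$ with $\Phi_E=0$ and $\Phi_{E'}\cong\Z/3$ (type $\II$ against type $\IV$): all your identities hold trivially there, yet $c/c'$ would be $\tfrac13$, contradicting the claim. You must first know that the Kodaira types of $E$ and $E'$ correlate (Theorems~\ref{deltaisog} and~\ref{lnep:kod}), or argue symmetrically as in the paper's Lemma~\ref{tamprimetop}, which compares the prime-to-$p$ parts of $E(K)/E_0(K)$ and $E'(K)/E'_0(K)$ and has no such asymmetry. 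Second, your case list omits $p=3$, type $\IV,\IVS$, $l\ne3$, with $\mu_3\subset K$, where the theorem asserts $c/c'=1$: this case is excluded both from your coprimality step (since $|\Phi|=3$) and from your exceptional family (i) (which assumes $\mu_3\not\subset K$). Your Weil-pairing argument does in fact cover it --- when $\mu_3\subset K$ the Galois modules $\ker\phi$ and $\ker\phi^t\cong\Hom(\ker\phi,\mu_3)$ are isomorphic, so $c=c'$ --- but this is never said. (The paper sidesteps both issues for $p=3$, $l\ne3$ by quoting \cite{isogroot} Lemma~11; your sketch for family (i), while incomplete, is a plausible reconstruction of that lemma's proof.)
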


\begin{lemma}
\label{tamprimetop}
The quotient $c/c'$ is a power of $p$.
\end{lemma}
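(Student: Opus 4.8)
The plan is to read off everything from the kernel--cokernel formula of Lemma~\ref{alphalead}(2). Rearranging that identity gives
$$
  \frac{c}{c'} = \Alphasub{\phi}{K}\,
  \frac{|\ker\phi: E(K)\to E'(K)|}{|\coker\phi: E(K)\to E'(K)|},
$$
so it suffices to check that each of the three factors on the right is a power of $p$ (allowing negative exponents). Since a product and quotient of powers of $p$ is again a power of $p$, the lemma follows at once once the three factors are understood.

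First I would handle the kernel and cokernel of $\phi: E(K)\to E'(K)$. The kernel is contained in $\ker\phi\subset E[p]$, which has order $1$ or $p$, so $|\ker\phi: E(K)\to E'(K)|$ is a power of $p$. For the cokernel, the Kummer sequence attached to $\phi$ identifies $\coker(\phi: E(K)\to E'(K))$ with a subgroup of the local Galois cohomology group $H^1(K,\ker\phi)$. Because $\ker\phi$ is a finite group scheme killed by $p$, this cohomology group is finite (local Tate cohomology) and annihilated by $p$; hence $\coker\phi$ is a finite $p$-group and $|\coker\phi: E(K)\to E'(K)|$ is a power of $p$ as well. In particular both groups are finite, so the formula above makes sense.

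It remains to see that $\Alphasub{\phi}{K}$ is a power of $p$. By Lemma~\ref{alphalead}(1) the leading term $a=\phi^*\omega'/\omega$ lies in $\cO_K$, so $v(a)\ge 0$ and
$$
  \Alphasub{\phi}{K} = \Bigl|\tfrac{\phi^*\omega'}{\omega}\Bigr|_K^{-1} = q^{v(a)},
$$
a nonnegative power of the residue cardinality $q$. If $l=p$, then $q$ is a power of $p$ and we are done. If $l\ne p$, Lemma~\ref{lnep:omega} shows $\phi^*\omega'$ is minimal, so $\Alphasub{\phi}{K}=1=p^0$. Either way $\Alphasub{\phi}{K}$ is a power of $p$, completing the argument.

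The only genuine subtlety is this last factor: \emph{a priori} $\Alphasub{\phi}{K}=q^{v(a)}$ is only forced to be a power of the residue cardinality $q=l^f$, not of $p$. The point that makes the lemma work is precisely the dichotomy $l=p$ versus $l\ne p$, the latter being resolved by the minimality statement of Lemma~\ref{lnep:omega}. I expect this to be where the care is needed; the kernel and cokernel bounds are routine.
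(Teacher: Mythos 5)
Your proof is correct, but it takes a genuinely different route from the paper's. The paper argues directly on component groups: $\phi$ induces maps $E(K)/E_0(K)\to E'(K)/E'_0(K)$ and, via the dual isogeny, back again, with both composites equal to $[p]$; since $[p]$ is an automorphism of the prime-to-$p$ part of any finite abelian group, $\phi$ gives an isomorphism between the prime-to-$p$ parts of the two component groups, so $c$ and $c'$ have the same prime-to-$p$ part. You instead read everything off the kernel--cokernel formula of Lemma~\ref{alphalead}(2): the kernel has order $1$ or $p$; the cokernel injects into $H^1(K,\ker\phi)$, which is finite and killed by $p$, hence is a $p$-group; and $\Alphasub{\phi}{K}$ is handled by the dichotomy $l=p$ (where the residue cardinality $q$ is a power of $p$) versus $l\ne p$ (where Lemma~\ref{lnep:omega} gives $\Alphasub{\phi}{K}=1$). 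This is sound and non-circular, since neither Lemma~\ref{alphalead} nor Lemma~\ref{lnep:omega} depends on the present lemma, and your closing remark correctly identifies $\Alphasub{\phi}{K}$ as the one factor that is not automatically a power of $p$. The trade-off: your argument is a formal consequence of results already in place, at the cost of invoking Schaefer's formula and local Galois cohomology; the paper's argument is more elementary (no cohomology at all) and proves the stronger structural fact that the prime-to-$p$ parts of $E/E_0$ and $E'/E'_0$ are isomorphic as groups --- a fact the paper reuses later, e.g.\ in the proof of Theorem~\ref{lnep:kod}(1), where invariance of the 2-part of the Tamagawa number under an odd-degree isogeny is what rules out the ambiguous Kodaira-type pairs.
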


\begin{proof}
The isogeny $\phi$ induces maps 
$E(K)\to E'(K)$ and $E_0(K)\to E'_0(K)$, and so $E/E_0\to E'/E'_0$. 
These are finite groups, and since $\phi\phi^t=[p]=\phi^t\phi$ are automorphisms
on their prime-to-$p$ parts, $\phi$ is an isomorphism between these 
prime-to-$p$ parts. 
\end{proof}

\begin{proof}[Proof of Theorem \ref{tammain}]
In the semistable case this follows from 
Tate's algorithm \cite{Sil2} IV.9
and Theorem \ref{delmain}. 
Assume henceforth that $E$ and $E'$ have additive reduction.
In particular $1\le c,c'\le 4$
(cf. \cite{Sil1} VII.6.1, \cite{Sil2} IV.9 Table 4.1),
so for $p>3$ the result follows by Lemma \ref{tamprimetop}.

For $p=3$, $l\ne 3$ see \cite{isogroot} Lemma 11. 

Suppose $p=2$, $l\ne 2$. If the Kodaira type is not $\IZS$ or $\InS n$,
the Kodaira types of $E$ and $E'$ are the same by Theorem \ref{lnep:kod}. 
By the reduction type table (\cite{Sil2} IV.9, Table 4.1)
in the case $\II, \IIS, \IV, \IVS$ the 2-parts of 
the Tamagawa numbers are trivial, and in the case $\III, \IIIS$ they are both 2.
Hence $c$ and $c'$ have the same $2$-part, and are therefore equal.
When the reduction type is $\IZS$, see the computation in \cite{isogroot} \S7.4.

For $p=l=3$ and type $\III, \IIIS, \IZS$, the isogenous curve $E'$ also has
one of these three Kodaira types (Theorem \ref{deltaisog}).
The Tamagawa numbers for~these 
%
types can be 1, 2 or 4,
so the 3-isogeny forces the equality $c=c'$ (Lemma \ref{tamprimetop}).
Similarly, for $p=l=2$ and type $\IV, \IVS$, the Tamagawa numbers are
1 or 3, and are unchanged by a 2-isogeny.

Finally, in the three remaining cases
($p=l=3$, type $\InS n$; 
$p=l=2$, $E$ not potentially supersingular;
or $p=2$, $l\ne 2$, type $\InS n$), 
some quadratic twist $E_d/K$ of $E/K$
has either good or split multiplicative reduction 
(Theorem \ref{potordkod}, Lemma \ref{quadmult}). 
Let $F=K(\sqrt d)$ be the corresponding quadratic extension.
By Lemma \ref{kercoker4} (see Notation \ref{notalpha} and Remark \ref{twistisog}
for the notation),
%
$$
   \Bigl(\Alphasub{\phi}{K}\frac{\csub{E'\!}{K}}{\csub{E}{K}}\Bigr)^{-1}\cdot
   \Alphasub{\phi}{F}\frac{\csub{E'_d}{F}}{\csub{E_d}{F}}\cdot
   \Bigl(\Alphasub{\phi_d}{K}\frac{\csub{E'_d}{K}}{\csub{E_d}{K}}\Bigr)^{-1}\cdot
   \frac{|\frac{E'_d(K)}{N E'_d(F)}|}{|\frac{E_d(K)}{N E_d(F)}|} = 1.
$$
Proposition \ref{twistmain}(2) shows that $\Alphasub{\phi}{K}\!=\!\Alphasub{\phi_d}{K}$,
and $\Alphasub{\phi}{F}\!=\!\Alphasub{\phi}{K}^2\!=\!\Alphasub{\phi}{K}\Alphasub{\phi_d}{K}$.
Also,
$$
  \frac{\csub{E'_d}{F}}{\csub{E_d}{F}} = \frac{\csub{E'_d}{K}}{\csub{E_d}{K}}
$$ 
by the good and split multiplicative cases of the theorem.
Hence
$$
   \frac{\csub{E'\!}{K}}{\csub{E}{K}} =
   {\Bigl|\frac{E'_d(K)}{N E'_d(F)}\Bigr|}/{\Bigl|\frac{E_d(K)}{N E_d(F)}\Bigr|}.
$$
If $p=3$, then $\frac{\csub{E'\!}{K}}{\csub{E}{K}}$ is a power of 3
(Lemma \ref{tamprimetop})
but
the groups in the right-hand side are 2-groups, so $\csub{E'\!}{K}=\csub{E}{K}$. 

Finally, suppose $p=2$. 
If $E_d$ has good reduction (so $l=p$ and the reduction is good ordinary),
by \cite{KT} Prop~8.6
$\frac{E_d(K)}{N E_d(F)}$ has order 4 or 2, corresponding to
whether $\Delta_{E_d/K}$ is a norm in $F/K$ or not, and similarly for $E'$.
This gives the result for $c/c'$, noting that $\Delta_{E_d/K}$ can be
replaced by $\Delta_{E/K}$ in this criterion, since they differ by a 6th power
(Lemma \ref{discquadtwist}).

If $E_d$ has split multiplicative reduction, by \cite{KT} Prop~4.1
$\frac{E_d(K)}{N E_d(F)}$ has order 2 or 1, depending on whether
the Tate parameter $q$ of $E_d/K$ is a norm in $F/K$ or not.
Because $\Delta_{E_d/K}/q=\prod(1-q^n)^{24}$ is a square (\cite{Sil2} \S V.3)
and $\Delta_{E/K}/\Delta_{E_d/K}$ is a 6th power,
we get the same result as in the potentially ordinary case.
\end{proof}

\section{Real and complex periods}
\label{s:periods}

%

\begin{notation}
In this section the field $\cK$ will be $\R$ or $\C$, and $\phi:E\to E'$
a $\cK$-rational $p$-isogeny of elliptic curves over $\cK$.
\end{notation}

\begin{definition}
The {\em period} of an elliptic curve $E/\cK$ 
with respect
to an invariant differential $\omega$ is
$$
  \Omega(E,\omega) =
  {\int_{E(\cK)} |\omega|}\quad{\text{if $\cK\iso\R$}},
$$
and
$$
  \Omega(E,\omega) =
  {2\int_{E(\cK)} |\omega\wedge\bar\omega|}\quad{\text{if $\cK\iso\C$}}.
$$
\end{definition}

\begin{remark}
For $\cK=\R$, one sometimes uses the period $\Omega^+(E,\omega)$, which is obtained
by integrating only over the connected component of $E(\R)$. Thus
$\Omega\!=\!\Omega^+$ or $2\Omega^+$, depending on whether or
not $E(\R)$ is connected.
When working over $\Q$, one usually takes $\omega$ to be the global minimal 
differential and omits it from the notation.
\end{remark}


\begin{lemma}
\label{archom}
The periods of $E$ and $E'$ satisfy
$$
  \frac{\Omega(E,\omega)}{\Omega(E',\omega')} =
   \frac{|\ker\phi: E(\cK)\to E'(\cK)|}{|\coker\phi: E(\cK)\to E'(\cK)|} 
   \cdot \Bigl|\frac{\omega}{\phi^*\omega'}\Bigr|_\cK.
$$
\end{lemma}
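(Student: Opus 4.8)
The plan is to compare the two period integrals by a change of variables along $\phi$, viewing $\phi: E(\cK)\to E'(\cK)$ as a finite covering onto its image. Since $\phi$ is a separable isogeny of degree $p$ over a field of characteristic $0$, it is étale, so $\phi^*\omega'=\frac{\phi^*\omega'}{\omega}\,\omega$ with a nonzero scalar, and $\phi$ is a local analytic isomorphism on $\cK$-points. As $E(\cK)$ is compact (the real or complex points of a projective curve), $\phi$ restricts to a $|\ker\phi|$-sheeted covering of $E(\cK)$ onto the closed finite-index subgroup $\phi(E(\cK))\subset E'(\cK)$, where the index is $|\coker\phi|$ (for $\cK=\C$ the map is surjective, so this index is $1$).

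First I would carry out the change of variables for the pullback measure. For $\cK=\R$ the relevant measure is $|\omega'|$ and for $\cK=\C$ it is $|\omega'\wedge\bar\omega'|$; in either case, because $\phi$ is $|\ker\phi|$-to-one onto its image,
$$
  \int_{E(\cK)}|\phi^*\omega'| = |\ker\phi|\int_{\phi(E(\cK))}|\omega'|
$$
(and likewise with the $2$-form in the complex case). Next, using that $\omega'$ is translation-invariant, the integral of $|\omega'|$ over each of the $|\coker\phi|$ cosets of $\phi(E(\cK))$ in $E'(\cK)$ is the same, so
$$
  \int_{E'(\cK)}|\omega'| = |\coker\phi|\int_{\phi(E(\cK))}|\omega'|.
$$
Combining these two identities gives $\int_{E(\cK)}|\phi^*\omega'| = \frac{|\ker\phi|}{|\coker\phi|}\,\Omega(E',\omega')$.

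It then remains to express the left-hand side in terms of $\Omega(E,\omega)$. Writing $\phi^*\omega'=\frac{\phi^*\omega'}{\omega}\,\omega$ and using that rescaling an invariant differential by $a\in\cK^\times$ multiplies the period measure by $|a|_\cK$ — indeed $|a\omega|=|a|\,|\omega|$ for $\cK=\R$, while $(a\omega)\wedge\overline{(a\omega)}=|a|^2\,\omega\wedge\bar\omega=|a|_\C\,\omega\wedge\bar\omega$ for $\cK=\C$, which is precisely why the normalised absolute value appears — one gets $\int_{E(\cK)}|\phi^*\omega'| = \bigl|\frac{\phi^*\omega'}{\omega}\bigr|_\cK\,\Omega(E,\omega)$. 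Equating the two expressions for $\int_{E(\cK)}|\phi^*\omega'|$ and solving for $\Omega(E,\omega)/\Omega(E',\omega')$ yields the claimed formula.

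The main obstacle is making the covering-space change of variables rigorous while keeping the bookkeeping of sheets and cosets correct: one must check that $\phi$ is genuinely a finite covering onto its image (using compactness of $E(\cK)$ together with étaleness, so that the kernel and cokernel are finite) and that the image is a closed subgroup of the stated index $|\coker\phi|$. Everything else is a routine verification, with the normalisation of $|\cdot|_\cK$ arranged precisely so that the $\R$ and $\C$ cases are handled uniformly.
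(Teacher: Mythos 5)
Your proof is correct and follows essentially the same route as the paper: view $\phi$ as an $|\ker\phi|$-sheeted unramified cover of its image, use translation invariance of $|\omega'|$ to account for the $|\coker\phi|$ cosets, and then rescale by $\bigl|\frac{\omega}{\phi^*\omega'}\bigr|_\cK$ to pass from $\phi^*\omega'$ back to $\omega$. The paper's proof is just a more compressed version of the same computation, so there is nothing to add.
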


\begin{proof}
The map $\phi: E(\cK)\to E'(\cK)$ is an $n$-to-1 unramified cover of 
$\phi(E(\cK))$, with $n=|\ker\phi|$. Therefore, if $\cK=\R$, then
$$
  \int_{E(\cK)} |\phi^*\omega'| = n \int_{\phi(E(\cK))} |\omega'|
    = \frac{n}{[E'(\cK): \phi(E(\cK))]} \int_{E'(\cK)} |\omega'|,
$$
and similarly for $\omega\wedge\bar\omega$ when $\cK=\C$. Hence
$$
  \frac{\Omega(E,\omega)}{\Omega(E',\omega')} =
  \frac{\Omega(E,\phi^*\omega')}{\Omega(E',\omega')} \cdot 
    \frac{\Omega(E,\omega)}{\Omega(E,\phi^*\omega')} = 
   \frac{|\ker\phi: E(\cK)\to E'(\cK)|}{|\coker\phi: E(\cK)\to E'(\cK)|} 
   \cdot \Bigl|\frac{\omega}{\phi^*\omega'}\Bigr|_\cK.
$$
\end{proof}

\begin{remark}
\label{remper}
If $K$ is an $l$-adic field with residue field $k$, then 
$$
  \int_{E(K)} |\omega|_K = \csub{E}{K} \frac{|E(k)|}{|k|},
$$
and $\frac{|E(k)|}{|k|}$ is the value of the Euler factor of $E$ at $s=1$.
These local integrals enter Tate's formulation of the
Birch--Swinnerton-Dyer conjecture~\cite{TatC}. Lemma \ref{archom} 
is the Archimedean analogue of Lemma \ref{alphalead}(2).
\end{remark}


\begin{proposition}
\label{kercokerR}
The quotient $\frac{|\ker\phi: E(\cK)\to E'(\cK)|}{|\coker\phi: E(\cK)\to E'(\cK)|}$ is
\begin{itemize}
\item $p$ if $\cK=\C$,
\item $p$ if $\cK=\R$, $p\ne 2$ and $\ker\phi\subset E(\cK)$,
\item $1$ if $\cK=\R$, $p\ne 2$ and $\ker\phi\not\subset E(\cK)$.
\end{itemize}
If $\cK\!=\!\R$ and $p\!=\!2$, write $E$ in the form $y^2\!=\!x^3\!+\!ax^2\!+\!bx$ 
so that $(0,0)\in\ker\phi$. In this case, the quotient is
\begin{itemize}
\item 1 if $b>0$, and either $a<0$ or $4b>a^2$,
\item 2 otherwise.
\end{itemize}
\end{proposition}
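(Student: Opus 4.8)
The plan is to compute the two finite quantities $|\ker\phi: E(\cK)\to E'(\cK)|$ and $|\coker\phi: E(\cK)\to E'(\cK)|$ separately, working directly with the group of $\cK$-points as a real Lie group. The case $\cK=\C$ is immediate: choosing uniformisations $E(\C)=\C/\Lambda$, $E'(\C)=\C/\Lambda'$ with $\Lambda\subset\Lambda'$ of index $p$, the map $\phi$ becomes the quotient map $\C/\Lambda\to\C/\Lambda'$, which is surjective with kernel $\Lambda'/\Lambda\cong\Z/p$. Hence $|\ker\phi|=p$ and $|\coker\phi|=1$, giving the value $p$.

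For $\cK=\R$ I would exploit the structure $E(\R)\cong S^1$ or $S^1\times\Z/2$ and the short exact sequences $0\to E(\R)^0\to E(\R)\to\pi_0(E(\R))\to 0$ (and similarly for $E'$). As $\phi$ is a nonconstant homomorphism, it restricts to a surjection $\phi^0: E(\R)^0\to E'(\R)^0$ of circles, so $\coker\phi^0=0$; feeding the two sequences into the snake lemma then yields $|\ker\phi|=|\ker\phi^0|\cdot|\ker\bar\phi|$ and $\coker\phi\cong\coker\bar\phi$, where $\bar\phi: \pi_0(E(\R))\to\pi_0(E'(\R))$ is the induced map. Thus
$$\frac{|\ker\phi|}{|\coker\phi|}=|\ker\phi^0|\cdot\frac{|\ker\bar\phi|}{|\coker\bar\phi|}.$$
When $p$ is odd, $[p]=\phi^t\phi$ is an automorphism of the $2$-primary part, hence of the component groups $\pi_0\cong\Z/2$ (or $0$), so $\bar\phi$ is an isomorphism and $\ker\bar\phi=\coker\bar\phi=0$; moreover every point of odd order lies in the identity component, so $\ker\phi^0=\ker\phi\cap E(\R)$. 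This group has order $p$ if $\ker\phi\subset E(\R)$ and order $1$ otherwise, giving the values $p$ and $1$ as stated.

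For $\cK=\R$ and $p=2$, write $E: y^2=x^3+ax^2+bx$ and, using the model in the proof of Theorem \ref{deltaquo2}, $E': y^2=x^3-2ax^2+(a^2-4b)x$, so that $\ker\phi=\{O,(0,0)\}$ with $(0,0)\in E(\R)$ and therefore $|\ker\phi|=2$ unconditionally. It then remains to compute $|\coker\phi|=|E'(\R)/\phi(E(\R))|$, and for this I would invoke the classical descent via the $2$-isogeny (cf. \cite{Sil1} \S X.4): the map $\mu: E'(\R)\to\R^\times/(\R^\times)^2$ sending $(x,y)\mapsto x$ and $(0,0)_{E'}\mapsto a^2-4b$ is a homomorphism whose kernel is exactly $\phi(E(\R))$. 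Hence $\coker\phi$ is isomorphic to the image of $\mu$ inside $\R^\times/(\R^\times)^2\cong\{\pm1\}$, so $|\coker\phi|\in\{1,2\}$ and equals $2$ precisely when $\mu$ is nontrivial, i.e. when $a^2-4b<0$ or $E'(\R)$ contains a point with negative $x$-coordinate.

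The remaining, and main, step is the sign analysis that turns this criterion into the stated inequalities. Writing $E'$ as $y^2=x\,(x^2-2ax+(a^2-4b))$, a point with $x<0$ exists exactly when the quadratic factor, whose roots are $a\pm2\sqrt b$, is non-positive somewhere on $x<0$, which forces $b>0$ together with $a-2\sqrt b<0$. Absorbing the $2$-torsion contribution $a^2-4b$ into this (it only affects the case $a\ge 0$, $a^2<4b$, which is already covered), $\mu$ is nontrivial iff $b>0$ and $a<2\sqrt b$, equivalently iff $b>0$ and either $a<0$ or $4b>a^2$; in this case $|\coker\phi|=2$ and the quotient is $1$, and otherwise $|\coker\phi|=1$ and the quotient is $2$. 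I expect the care in this real-locus bookkeeping, together with the verification that $\ker\mu=\phi(E(\R))$ over $\R$, to be where the genuine work lies, the rest being formal.
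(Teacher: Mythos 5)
Your proof is correct, and its skeleton matches the paper's, but you supply the content that the paper outsources. The paper's own proof is two lines: for $p\ne 2$ it simply notes that the cokernel is trivial (immediate, since $[p]=\phi\circ\phi^t$ is surjective on $E'(\R)$ for odd $p$, $E'(\R)$ being $S^1$ or $S^1\times\Z/2$), so the quotient equals $|\ker\phi\cap E(\R)|\in\{1,p\}$; and for $p=2$ it records that the kernel has order $2$ and cites \cite{isogroot} \S 7.1 for the cokernel. Your snake-lemma decomposition for odd $p$ establishes exactly these two facts (cokernel trivial, kernel equal to $\ker\phi\cap E(\R)^0$), with slightly more machinery than strictly needed but correctly: $\bar\phi$ is an isomorphism on component groups because $\bar\phi^t\bar\phi$ and $\bar\phi\bar\phi^t$ are multiplication by odd $p$ on groups of exponent dividing $2$. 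The genuine work is your $p=2$ computation, which replaces the citation: you identify $\coker(\phi:E(\R)\to E'(\R))$ with the image of the connecting map $\mu: E'(\R)\to \R^\times/(\R^\times)^2$, $(X,Y)\mapsto X$, $(0,0)\mapsto a^2-4b$, whose kernel is $\phi(E(\R))$ (\cite{Sil1} X.4.9 is valid over any field of characteristic $\ne 2$, in particular over $\R$), and then read off nontriviality of the image from the real locus of $E': y^2=x\bigl(x^2-2ax+(a^2-4b)\bigr)$. I checked your sign analysis: for $b<0$ the quadratic factor is positive definite and $a^2-4b>0$, so $\mu$ is trivial and the quotient is $2$; for $b>0$ the image is nontrivial precisely when $a<2\sqrt b$, i.e.\ when $a<0$ or $4b>a^2$ (the $2$-torsion contribution $a^2-4b<0$ is indeed subsumed in this case, and the boundary $a^2=4b$ is excluded by nonsingularity of $E'$), giving quotient $2/2=1$ there and $2$ otherwise, as stated. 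An equivalent route, and one closer to what such references typically do, is component counting: $|\coker\phi|=2$ iff $E'(\R)$ is disconnected (iff $b>0$, since the sign of $\Delta'$ is the sign of $b$) while $\phi(E(\R))$ is connected; your map $\mu$ packages exactly this information, so the two computations agree. Either way, your argument is complete and self-contained where the paper's is not.
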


%
%
%

\begin{proof}
When $p\ne 2$, the cokernel is trivial and the result follows immediately.
For $p=2$, the kernel has size 2, and the cokernel is computed in 
\cite{isogroot} \S7.1.
\end{proof}

\section{Periods of elliptic curves over $\Q$}
\label{s:perQ}

Finally, we turn to periods of elliptic curves over $\Q$.



\begin{notation}
For an elliptic curve $E$ over $\Q$ we write $\omega$ for the global
minimal differential on $E$ and
$$
 \Omega = \Omega(E/\R,\omega), \qquad \qquad \Omega_\C=\Omega(E/\C,\omega)
$$
for its real and complex periods. We similarly use $\omega',\Omega'$ and
$\Omega'_\C$ for $E'/\Q$.
\end{notation}

\begin{theorem}
\label{perQpar}
Let $\phi\!:\!E\to E'$ be a rational $p$-isogeny of elliptic curves~over~$\Q$.
Then the quotient $\Omega/\Omega'$ is $p, 1$ or $p^{-1}$, and
the following are equivalent:
\begin{itemize}
\item[(1)]
$\Omega=\Omega'$.
\item[(2)] $\sum_l \ord_p(\frac{\csub{E}{\,\Q_l}}{\csub{E'\!}{\,\Q_l}}) \equiv \ord_{s=1}L(E,s)\mod 2$.
\end{itemize}
If $p\ne 2,3$, this is also equivalent to
\begin{itemize}
\item[(3)]
$E$ has an odd number of primes of additive
reduction with local root number $-1$.
%
\end{itemize}
The quotient $\Omega_\C/\Omega'_\C$ is $p$ or $p^{-1}$, and
it is $p$ if and only if $\omega=\pm\phi^*\omega'$.
\end{theorem}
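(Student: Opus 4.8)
The plan is to combine the archimedean formula of Theorem~\ref{iperiods} with a global local-to-global identity and to extract the parity at the very end. First I would pin down the differential ratio $a:=\frac{\phi^*\omega'}{\omega}$. Since $\omega,\omega'$ are global minimal differentials over $\Q$, they are minimal at every prime, so Lemma~\ref{alphalead}(1) gives $v_l(a)\ge0$ for all $l$; the same holds for $a':=\frac{(\phi^t)^*\omega}{\omega'}$, and $aa'=p$ because $\phi^t\phi=[p]$, whence $a\in\{\pm1,\pm p\}$. Feeding this into Theorem~\ref{iperiods} disposes of the two ``shape'' assertions at once. For $\cK=\C$ (so $\lambda=p$) one gets $\frac{\Omega_\C}{\Omega'_\C}=p\,|a|^{-2}\in\{p,p^{-1}\}$, equal to $p$ exactly when $a=\pm1$, i.e. $\omega=\pm\phi^*\omega'$; for $\cK=\R$ one gets $\frac{\Omega}{\Omega'}=\lambda|a|^{-1}$, and running over the possible values of $\lambda$ and $|a|$ shows this lies in $\{p,1,p^{-1}\}$.

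Next I would set up the local-to-global identity. For each place $v$ put $Q_v=\frac{|\coker(\phi\colon E(\Q_v)\to E'(\Q_v))|}{|\ker(\phi\colon E(\Q_v)\to E'(\Q_v))|}$. At a finite $v=l$, Lemma~\ref{alphalead}(2) gives $Q_l=\Alphasub{\phi}{\Q_l}\frac{\csub{E'}{\Q_l}}{\csub{E}{\Q_l}}$, while at $v=\infty$ Lemma~\ref{archom} expresses $Q_\infty$ through $\frac{\Omega'}{\Omega}$ and $|a|$. Multiplying over all $v$ and using the product formula $\prod_v|a|_v=1$ to cancel the differential factors (note $|a|_l=1$ for $l\ne p$) yields
$$
  \prod_v Q_v=\frac{\Omega'}{\Omega}\prod_l\frac{\csub{E'}{\Q_l}}{\csub{E}{\Q_l}}.
$$
On the other hand, the Greenberg--Wiles/Poitou--Tate Euler-characteristic formula for the Cartier-dual modules $\ker\phi$ and $\ker\phi^t$, with the local conditions defining the $\phi$- and $\phi^t$-descents, identifies $\prod_v Q_v$ with $\frac{\#\Sel^{(\phi)}(E/\Q)}{\#\Sel^{(\phi^t)}(E'/\Q)}\cdot\frac{\#E'(\Q)[\phi^t]}{\#E(\Q)[\phi]}$. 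Equating the two and taking $\ord_p$ produces
$$
  \ord_p\tfrac{\Omega}{\Omega'}+\sum_l\ord_p\tfrac{\csub{E}{\Q_l}}{\csub{E'}{\Q_l}}
   =\dim_{\Fp}\Sel^{(\phi^t)}(E')-\dim_{\Fp}\Sel^{(\phi)}(E)+\ord_p\tfrac{\#E(\Q)[\phi]}{\#E'(\Q)[\phi^t]}.
$$

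The hard part will be to show that the right-hand side is $\equiv\ord_{s=1}L(E,s)\pmod2$: this is exactly the $p$-parity statement attached to the isogeny descent, which I would import from the $p$-parity theorem for $E/\Q$ (the $p^\infty$-Selmer corank has the parity of the analytic rank and is isogeny-invariant) together with the standard exact sequence linking the $\phi$- and $\phi^t$-Selmer groups to the $p$-Selmer group. Granting it, and writing $r=\ord_{s=1}L(E,s)$, $D=\ord_p\frac{\Omega}{\Omega'}$ and $S=\sum_l\ord_p\frac{\csub{E}{\Q_l}}{\csub{E'}{\Q_l}}$, the identity becomes $D+S\equiv r\pmod2$. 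Since $\frac{\Omega}{\Omega'}\in\{p,1,p^{-1}\}$ forces $D\in\{-1,0,1\}$, we get $\Omega=\Omega'\liff D=0\liff D\text{ is even}\liff S\equiv r\pmod2$, which is the equivalence $(1)\liff(2)$.

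Finally, for $p\ne2,3$ I would read $(2)\liff(3)$ off the Tamagawa table. By Theorem~\ref{tammain}, only split multiplicative primes contribute an odd $\ord_p$ to $S$, since all additive and nonsplit-multiplicative quotients $c/c'$ are prime to $p$ when $p>3$; thus $(2)$ becomes $\#\{\text{split mult.\ primes}\}\equiv r\pmod2$. Substituting $(-1)^r=w(E)=\prod_v w_v(E)$ from the functional equation, together with the known local root numbers ($-1$ at the real place and at split multiplicative primes, $+1$ at good and nonsplit multiplicative primes), converts this into the assertion that the number of additive primes with $w_v(E)=-1$ is odd, which is $(3)$.
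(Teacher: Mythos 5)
Your proposal is correct and takes essentially the same route as the paper: the shape statements for $\Omega/\Omega'$ and $\Omega_\C/\Omega'_\C$ come from the archimedean formula (Lemma~\ref{archom} and Proposition~\ref{kercokerR}) together with the integrality of $\phi^*\omega'/\omega$ and $(\phi^t)^*\omega/\omega'$, the equivalence $(1)\Leftrightarrow(2)$ rests on the $p$-parity theorem plus Cassels' isogeny formula, and $(2)\Leftrightarrow(3)$ is read off from Theorem~\ref{tammain} and local root numbers exactly as in the paper. The only presentational difference is that where the paper directly cites \cite{Squarity} Rmk.~4.4 for the congruence $\ord_p\frac{\Omega}{\Omega'}+\sum_l \ord_p(\csub{E}{\,\Q_l}/\csub{E'\!}{\,\Q_l})\equiv\ord_{s=1}L(E,s) \bmod 2$, you re-derive its content from Greenberg--Wiles duality, Tate local duality and the product formula, with the same deep input (the $p$-parity theorem over $\Q$) imported as a black box.
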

\begin{proof}
By Lemma \ref{archom},
$$
  \frac{\Omega}{\Omega'} =
   \frac{|\ker\phi: E(\R)\to E'(\R)|}{|\coker\phi: E(\R)\to E'(\R)|} 
   \cdot \Bigl|\frac{\omega}{\phi^*\omega'}\Bigr|.
$$
The first term is $p$ or $1$ by Proposition \ref{kercokerR},
and the second term $|\frac{\omega}{\phi^*\omega'}|$ is either 1 or $p^{-1}$.
So $\Omega/\Omega'\in\{1,p,p^{-1}\}$. By the same argument over $\C$,
$|\frac{\omega}{\phi^*\omega'}|_\C$ is either 1 or $p^{-2}$, which immediately
gives the claim for the complex periods.

It remains to prove the equivalence of (1), (2) and (3). Now 
$\Omega=\Omega'$ if and only if $\ord_p(\frac{\Omega}{\Omega'})$ is even,
and we can relate this to the parity of the analytic rank and of 
the $p^\infty$-Selmer rank of $E/\Q$:
$$
  \ord_{s=1}L(E,s)  \equiv \rk_p E/\Q \equiv 
  \ord_p\frac{\Omega}{\Omega'} + \sum_l \ord_p(\frac{\csub{E}{\,\Q_l}}{\csub{E'\!}{\,\Q_l}})
  \mod 2,
$$
by the $p$-parity conjecture over $\Q$ (\cite{Squarity} Thm. 1.4) 
and Cassels' formula for the parity of the $p^\infty$-Selmer rank 
for an elliptic curve with a $p$-isogeny (\cite{Squarity} Rmk. 4.4).
This proves the equivalence $(1)\iff (2)$.

For $(2)\iff (3)$, suppose $p\ge 5$. Then 
by Theorem \ref{tammain}, 
$\ord_p(\frac{\csub{E}{\,\Q_l}}{\csub{E'\!}{\,\Q_l}})$ is odd if and only if 
$E$ has split multiplicative reduction at $l$.
So the left-hand side in (2) is the number of primes of split multiplicative
reduction. The right-hand side is determined by the global root number
$w$ of $E$: it is even if $w=+1$ and odd if $w=-1$. Because $w$ is the product
of local root numbers,
$$
  w= - \prod\nolimits_l w_l 
$$ 
and the local root numbers $w_l=\pm 1$ are $-1$ for primes of split multiplicative reduction
and $+1$ for primes of good and nonsplit multiplicative reduction, the result
follows.
\end{proof}

%

\begin{lemma}
\label{sernotsup}
Suppose $K/\Q_p$ is unramified. There are no elliptic curves over $K$ with
good supersingular reduction that admit a $p$-isogeny.
\end{lemma}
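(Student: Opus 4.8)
The plan is to exploit the constraints that $p$-isogenies and supersingular reduction place on the formal group, combined with the hypothesis that $K/\Q_p$ is unramified. Suppose for contradiction that $E/K$ has good supersingular reduction and admits a $p$-isogeny $\phi: E\to E'$. Since $E$ has good reduction, $E'$ does too (reduction type is preserved under isogeny via the N\'eron--Ogg--Shafarevich criterion, as $\phi$ induces an isomorphism on $\ell$-adic Tate modules for $\ell\ne l$), and $E'$ is also supersingular. The key invariant to track is $\frac{\phi^*\omega'}{\omega}$, which by Lemma \ref{alphalead}(1) is a unit or $p\cdot\text{unit}$ up to units, with $a\cdot a'=p$ where $a'$ is the corresponding leading term for $\phi^t$.

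First I would analyze the reduction $\tilde\phi$ of $\phi$ on the formal groups of the reduced curves. Because $\tilde E$ is supersingular, its formal group $\hat{\tilde E}$ has height $2$, so $[p]$ acts with leading term of $p$-adic valuation corresponding to height $2$ rather than height $1$. Writing $[p]=\tilde\phi^t\circ\tilde\phi$ on the formal group, and noting that each of $\tilde\phi,\tilde\phi^t$ is an isogeny of prime degree $p$, the multiplicativity of leading terms forces both $a$ and $a'$ to reduce to zero modulo $\m_K$ (in sharp contrast to the ordinary case in Proposition \ref{omegaord}, where exactly one is a unit). This means $v(a)\ge 1$ and $v(a')\ge 1$; combined with $v(a)+v(a')=v(p)$, and the unramified hypothesis giving $v(p)=1$, we obtain $v(a)+v(a')=1$ with both terms at least $1$, which is the desired contradiction.

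The hard part will be making the height argument on the formal group fully rigorous, specifically justifying that supersingularity forces \emph{both} leading terms to vanish rather than just controlling their product. The cleanest route is to observe that if $\tilde\phi$ were an isomorphism on formal groups (i.e.\ $a$ a unit), then $\tilde\phi^t$ would carry the full height of $[p]$, making $\tilde\phi^t$ inseparable of height $2$ but degree $p$, which is impossible; symmetrically for $\tilde\phi^t$. Thus neither reduced isogeny can be a formal-group isomorphism, pinning $v(a),v(a')\ge 1$. I would present this via the height additivity $\operatorname{ht}(\tilde\phi)+\operatorname{ht}(\tilde\phi^t)=\operatorname{ht}([p])=2$, noting each height is a nonnegative integer and an isogeny of degree $p$ on a formal group has height at most $1$ when it is nonconstant modulo $\m_K$. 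Hence both heights equal $1$, so $v(a)=v(a')\ge 1$, and the valuation count closes the argument. The unramified hypothesis is essential precisely because it pins $v(p)=1$; over a ramified base $v(p)\ge 2$ would leave room for $v(a)=v(a')=1$, which is exactly why supersingular reduction with a $p$-isogeny can occur only after ramification.
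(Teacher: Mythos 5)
Your proof is correct, but it takes a genuinely different route from the paper's. The paper disposes of the lemma in two lines by citing Serre (\cite{SerP} Prop.\ 12): when $K/\Q_p$ is unramified and the reduction is good supersingular, the image of $\Gal(\bar K/K)$ in $\Aut E[p]$ contains the non-split Cartan subgroup of $\GL_2(\F_p)$, which acts irreducibly on $E[p]$; a $K$-rational $p$-isogeny would give a Galois-stable line, contradiction. Your argument instead runs through the formal group: since $\tilde E$ and $\tilde E'$ are supersingular, they have no geometric $p$-torsion points, so the kernels of the reduced degree-$p$ isogenies $\tilde\phi$ and $\tilde\phi^t$ are connected; hence both are purely inseparable, both leading terms $a$, $a'$ vanish modulo $\m_K$, and $v(a)+v(a')\ge 2$ contradicts $aa'=p$ with $v(p)=1$. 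This is sound, and it is precisely the supersingular counterpart of the paper's own proof of Proposition \ref{omegaord}, where the same height identity $\mathrm{ht}(\tilde\phi)+\mathrm{ht}(\tilde\phi^t)=\mathrm{ht}([p])$ is exploited with $\mathrm{ht}([p])=1$ instead of $2$. What your approach buys: it is elementary and self-contained (only N\'eron functoriality, connectedness of supersingular $p$-torsion, and multiplicativity of leading terms), and it makes transparent where unramifiedness enters --- $v(p)=1$ leaves no room for two positive valuations, which is exactly why supersingular curves with $p$-isogenies do exist over ramified fields. What the paper's citation buys: brevity, and a strictly stronger conclusion (irreducibility of the full mod-$p$ representation, not merely non-existence of a rational $p$-isogeny). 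One small slip in your write-up: from both heights being $1$ you may conclude $v(a)\ge 1$ and $v(a')\ge 1$, but not $v(a)=v(a')$; fortunately only the two inequalities are used.
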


\begin{proof}
If $E$ has good supersingular reduction and $K/\Q_p$ is unramified,
Serre \cite{SerP} Prop. 12 proves that the image of Galois in $\Aut E[p]$ 
contains the non-split Cartan subgroup of $\GL_2(\F_p)$.
In particular, it acts irreducibly on $E[p]$, so $E/K$ cannot have a $p$-isogeny.
\end{proof}


\begin{lemma}
\label{pullmin}
Suppose $K/\Q_p$ is unramified, $p$ is odd, $E/K$ is semistable and 
$\omega, \omega'$ are minimal differentials on $E$ and $E'$. 
If $\ker\phi\subset E(K)$, then $\frac{\omega}{\phi^*\omega'}$ is a $p$-adic unit. 
%
\end{lemma}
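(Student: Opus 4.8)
The plan is to rephrase the claim as $\Alphasub{\phi}{K}=1$, i.e.\ that $\frac{\phi^*\omega'}{\omega}$ is a unit (equivalently $\frac{\omega}{\phi^*\omega'}$ is one), and then to split according to the reduction of the semistable curve $E$; note that $l=p$ throughout.

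First I would treat good reduction. As $K/\Q_p$ is unramified and $E$ carries a $p$-isogeny, Lemma \ref{sernotsup} excludes supersingular reduction, so $E$ is good ordinary and Proposition \ref{omegaord} applies with $F=K(\ker\phi)$. The hypothesis $\ker\phi\subset E(K)$ makes $F=K$, and it remains to land in the ``unit'' branch of that proposition, i.e.\ to show $\ker\phi\not\subset\hat E(\m_K)$. This is where I would use that ordinary reduction gives a formal group of height $1$: the Newton polygon of $[p](T)/T$ runs from $(0,v(p))$ to $(p-1,0)$, so every nonzero $p$-torsion point of $\hat E$ has valuation $\frac{1}{p-1}$, which is not an integer since $K/\Q_p$ is unramified and $p$ is odd. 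Thus $\hat E(\m_K)[p]=0$, the order-$p$ group $\ker\phi$ cannot sit inside $\hat E(\m_K)$, and Proposition \ref{omegaord} forces $\frac{\phi^*\omega'}{\omega}\in\cO_K^\times$.

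For multiplicative reduction I would lean on Proposition \ref{omegapotmult}, which makes $\frac{\phi^*\omega'}{\omega}$ a unit exactly when $v(j)=p\,v(j')$; so the task reduces to deducing $v(j)=p\,v(j')$ from $\ker\phi\subset E(K)$. I would pass to the unramified quadratic extension $K_2/K$ (taken trivial if $E$ is already split), over which $E$ is a split Tate curve, and apply Theorem \ref{tatecurve}: the base-changed pair $(E,E')$ is $(E_{q^p},E_q)$ in one order or the other, the two possible kernels being $\langle q\rangle$ and $\mu_p$. Since $\ker\phi\subset E(K)\subset E(K_2)$ while $\mu_p\not\subset K_2$ --- because $K_2/\Q_p$ is still unramified and $p$ is odd, whereas $\Q_p(\mu_p)/\Q_p$ is ramified of degree $p-1$ --- the kernel must be $\langle q\rangle$. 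Hence $E=E_{q^p}$, so $\delta=p\,\delta'$ and therefore $v(j)=p\,v(j')$, and Proposition \ref{omegapotmult} delivers the unit.

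The main obstacle is the multiplicative case, specifically identifying which Tate parameter belongs to $E$ once nonsplit reduction is allowed. The key device is that the class of the Tate parameter is always rational whereas $\mu_p$ never is over an unramified extension of $\Q_p$ with $p$ odd; the careful point is to check that base change to $K_2$ both preserves the hypothesis $\ker\phi\subset E(\cdot)$ and does not introduce $p$-th roots of unity, so that the dichotomy of Theorem \ref{tatecurve} is resolved in the direction we want. Everything else is a direct appeal to Propositions \ref{omegaord} and \ref{omegapotmult}.
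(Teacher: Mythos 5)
Your proof is correct and follows essentially the same route as the paper: rule out supersingular reduction via Lemma \ref{sernotsup}, apply Proposition \ref{omegaord} in the good ordinary case, and use Tate-curve theory together with Proposition \ref{omegapotmult} in the multiplicative case. The only differences are that you spell out two points the paper leaves implicit --- the Newton-polygon computation showing $\hat E(\m_K)[p]=0$, and the identification of the rational kernel with $\langle q\rangle$ rather than $\mu_p$ --- and that you treat split and nonsplit multiplicative reduction uniformly by passing to $K_2$, whereas the paper instead observes that nonsplit reduction cannot occur at all under the hypothesis (the kernel, being an unramified quadratic twist of $\Z/p\Z$ or $\mu_p$, has no nonzero $K$-points).
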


\begin{proof}
The curve $E$ cannot have supersingular reduction (Lemma \ref{sernotsup})
or non-split multiplicative reduction (as $\ker\phi\iso\Z/p\Z$ or $\mu_p$
in the split multiplicative case, and it has no points after an unramified 
quadratic twist). 
In the good ordinary case, Proposition \ref{omegaord} gives the claim. 
In the split multiplicative case, see the proof of Proposition \ref{omegapotmult}.
%
%
\end{proof}

\begin{lemma}
\label{Omegassp}
If $\phi: E\to E'$ is a $p$-isogeny of elliptic curves over $\Q$,
with $p$ odd, $\ker\phi\subset E(\Q)$ and $E$ is semistable at $p$,
then $\frac{\Omega}{\Omega'}=\frac{\Omega_\C}{\Omega'_\C}=p$.
\end{lemma}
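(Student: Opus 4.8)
The plan is to combine the archimedean period formula with the $p$-adic local behaviour of the minimal differential. By Lemma~\ref{archom}, the real period quotient factors as
$$
  \frac{\Omega}{\Omega'} =
   \frac{|\ker\phi: E(\R)\to E'(\R)|}{|\coker\phi: E(\R)\to E'(\R)|}
   \cdot \Bigl|\frac{\omega}{\phi^*\omega'}\Bigr|,
$$
and similarly over $\C$, so it suffices to evaluate the kernel/cokernel factor and the archimedean absolute value of $\frac{\omega}{\phi^*\omega'}$ separately. First I would show that the second factor equals $1$, i.e.\ that $\phi^*\omega'=\pm\omega$ as global minimal differentials. The key point is that $\frac{\omega}{\phi^*\omega'}$ is a rational number which is a unit at every prime: away from $p$ this is Lemma~\ref{lnep:omega}, while at $p$ it is exactly the content of Lemma~\ref{pullmin}, whose hypotheses ($p$ odd, $E$ semistable at $p$, $\ker\phi\subset E(\Q_p)$) are guaranteed here because $\ker\phi\subset E(\Q)$ forces rationality of the kernel locally everywhere. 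Being a rational number that is a $p$-adic unit for all $p$, it must be $\pm1$, so $|\frac{\omega}{\phi^*\omega'}|=1$ and $|\frac{\omega}{\phi^*\omega'}|_\C=1$.

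It then remains to compute the kernel/cokernel factor and see that it equals $p$. Since $\ker\phi\subset E(\Q)\subset E(\R)$, the kernel of $\phi$ on $E(\R)$ has full size $p$. For the cokernel, the relevant cases of Proposition~\ref{kercokerR} apply: with $p$ odd the cokernel is trivial, so the factor is exactly $p$ (note $p=2$ is excluded by hypothesis). Over $\C$ the same Proposition gives the factor $p$ directly. Feeding these into the two period formulae yields $\frac{\Omega}{\Omega'}=p$ and $\frac{\Omega_\C}{\Omega'_\C}=p$, as claimed.

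The only genuine subtlety is verifying that Lemma~\ref{pullmin} is applicable at $p$, and I expect this to be the main obstacle. That lemma requires $K/\Q_p$ unramified together with $E/K$ semistable and $\ker\phi\subset E(K)$; taking $K=\Q_p$ these are immediate from the global hypotheses ($\Q_p/\Q_p$ is unramified, $E$ is assumed semistable at $p$, and $\ker\phi\subset E(\Q)$ gives $\ker\phi\subset E(\Q_p)$). Once this is in place the argument is a clean bookkeeping of local contributions, with the global rationality of $\frac{\omega}{\phi^*\omega'}$ (it lies in $\Q^\times$ because $\omega,\omega'$ are global and $\phi$ is defined over $\Q$) pinning it down to $\pm1$. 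I would be careful to note that the odd-$p$ restriction is what makes the real cokernel trivial and simultaneously lets Lemma~\ref{pullmin} apply, so the two halves of the computation rely on the same hypothesis.
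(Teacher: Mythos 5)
Your proposal is correct and follows the same route as the paper: Lemma~\ref{archom} for the factorisation, Proposition~\ref{kercokerR} for the kernel/cokernel factor $p$, and Lemmas~\ref{pullmin} and~\ref{lnep:omega} to pin $\frac{\omega}{\phi^*\omega'}$ down to $\pm 1$. The paper's proof is just a terser version of this chain (it leaves the ``unit at every prime, hence $\pm 1$'' step implicit, spelling it out only in the proof of Theorem~\ref{OmegaSS}), so your write-up matches it in substance.
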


\begin{proof}
We have
$$
  \frac{\Omega}{\Omega'} \req={archom}
   \frac{|\ker\phi: E(\R)\to E'(\R)|}{|\coker\phi: E(\R)\to E'(\R)|} 
   \cdot \Bigl|\frac{\omega}{\phi^*\omega'}\Bigr|
      \req={kercokerR} 
   p\>\Bigl|\frac{\omega}{\phi^*\omega'}\Bigr| 
      \req={pullmin} p,
$$
and similarly for $\Omega_\C$.
\end{proof}

\begin{remark}
Note from the proof of the lemma that without the semistability assumption
for real periods we still have $\Omega\ge \Omega'$ 
when $\ker\phi\subset E(\Q)$.
\end{remark}

\begin{theorem}
\label{OmegaSS}
Let $\phi: E\to E'$ be a $p$-isogeny of semistable elliptic curves over $\Q$,
with $p$ odd. 
Then $\frac{\Omega}{\Omega'}=\frac{\Omega_\C}{\Omega'_\C}$ 
is either $p$ or $p^{-1}$.
Moreover,
$$
  \frac{\Omega}{\Omega'}=\frac{\Omega_\C}{\Omega'_\C}=p
     \quad\liff\quad
  \frac{\omega}{\phi^*\omega'}=\pm 1    
     \quad\liff\quad
  \ker\phi\subset E(\Q).
$$
\end{theorem}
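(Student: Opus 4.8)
The plan is to establish the two logical equivalences in turn, using the semistability assumption to feed into the earlier period computations. The quantity $\frac{\Omega}{\Omega'}\in\{1,p,p^{-1}\}$ has already been pinned down over $\Q$ (Theorem~\ref{perQpar}), and by Lemma~\ref{archom} together with Proposition~\ref{kercokerR} it equals $p^{\pm 1}$ precisely because the $\R$-kernel-cokernel factor is $p$ or $1$ and $|\frac{\omega}{\phi^*\omega'}|$ is $1$ or $p^{-1}$. First I would argue that $\frac{\Omega}{\Omega'}=1$ cannot occur under semistability with $p$ odd: since $E$ is semistable at $p$ and $p$ is odd, the differential quotient $\frac{\omega}{\phi^*\omega'}$ is locally governed by Propositions~\ref{omegaord} and~\ref{omegapotmult} at $p$ (and is a unit away from $p$ by Lemma~\ref{lnep:omega}), so $|\frac{\omega}{\phi^*\omega'}|$ is exactly $1$ or $p^{-1}$ according to whether $\ker\phi$ lies on the formal group; matching this against the $\R$-factor rules out the value~$1$ and forces $\frac{\Omega}{\Omega'}=p^{\pm1}$. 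The complex periods are handled by the same Lemma~\ref{archom} argument over $\C$ (where the kernel-cokernel factor is always $p$), giving $\frac{\Omega_\C}{\Omega'_\C}=\frac{\Omega}{\Omega'}$.

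For the implication $\ker\phi\subset E(\Q)\Rightarrow\frac{\Omega}{\Omega'}=p$, the work is essentially already done: this is exactly Lemma~\ref{Omegassp}, whose hypotheses ($p$ odd, $E$ semistable at $p$, $\ker\phi\subset E(\Q)$) are satisfied, so I would simply invoke it to conclude $\frac{\Omega}{\Omega'}=\frac{\Omega_\C}{\Omega'_\C}=p$. This also gives one direction of the first equivalence, since $\frac{\Omega}{\Omega'}=p$ together with the explicit factorisation through Proposition~\ref{kercokerR} forces $|\frac{\omega}{\phi^*\omega'}|=1$, and combined with the global sign normalisation (global minimal differentials are unique up to sign) this upgrades to $\frac{\omega}{\phi^*\omega'}=\pm1$.

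The remaining content is the reverse direction $\frac{\omega}{\phi^*\omega'}=\pm1\Rightarrow\ker\phi\subset E(\Q)$, and I expect this to be the main obstacle. The cleanest route is contrapositive: assume $\ker\phi\not\subset E(\Q)$ and show $\frac{\omega}{\phi^*\omega'}\ne\pm1$. Here I would apply the whole chain to the dual isogeny $\phi^t:E'\to E$, noting $\frac{\phi^*\omega'}{\omega}\cdot\frac{(\phi^t)^*\omega}{\omega'}=p$ (as in the computation in the proof of Proposition~\ref{omegapotmult}), so that $\frac{\omega}{\phi^*\omega'}=\pm1$ would give $\frac{\Omega}{\Omega'}=1$ via Lemma~\ref{archom}, contradicting the first paragraph. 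The delicate point is to rule out the value $\pm1$ independently of knowing $\frac{\Omega}{\Omega'}$: one must verify that under semistability the leading coefficient on the formal group is a unit if and only if the kernel is rational, which follows from the local analysis of Propositions~\ref{omegaord} and~\ref{omegapotmult} prime by prime, using that $\ker\phi\subset E(\Q)$ is equivalent to $\ker\phi\subset E(\Q_l)$ lying on each formal group simultaneously. Assembling these local statements into the global equivalence is the step requiring the most care.
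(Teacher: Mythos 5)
Your outline correctly reuses Lemma~\ref{Omegassp} for the direction $\ker\phi\subset E(\Q)\Rightarrow\frac{\Omega}{\Omega'}=p$ (this matches the paper), but the rest rests on a correlation that has no local proof, and this is exactly where the content of the theorem lies. In your first paragraph you claim that ``matching'' the $p$-adic behaviour of $\frac{\omega}{\phi^*\omega'}$ against the $\R$-factor of Lemma~\ref{archom} rules out $\frac{\Omega}{\Omega'}=1$. But the $\R$-factor is governed by whether $\ker\phi\subset E(\R)$, while $\bigl|\frac{\omega}{\phi^*\omega'}\bigr|$ is governed by whether $\ker\phi$ lies on the formal group at $p$; these are conditions at \emph{different places}, and nothing in Propositions~\ref{omegaord}, \ref{omegapotmult} or Lemma~\ref{lnep:omega} links them. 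Indeed, your argument invokes semistability only at $p$, and under that weaker hypothesis the conclusion is false: a quadratic twist of a semistable curve with a $p$-isogeny still has a $p$-isogeny (Remark~\ref{twistisog}) and is still semistable at $p$ (twist by $d$ with $\Q_p(\sqrt d)/\Q_p$ unramified), yet by Theorem~\ref{perQpar} such twists can have $\Omega=\Omega'$ (an odd number of additive primes with local root number $-1$). So any correct argument must use semistability at \emph{every} prime, which your local matching never does. The same defect breaks your third paragraph twice over: $\ker\phi\not\subset E(\Q)$ does not imply $\ker\phi\not\subset E(\R)$ (the kernel could be real but irrational), so Lemma~\ref{archom} does not give $\frac{\Omega}{\Omega'}=1$ there; and the ``assembly'' you defer at the end --- that the leading coefficient is a unit if and only if the kernel is rational --- is not a matter of care, it \emph{is} the missing theorem, and it is moreover circular to lean on paragraph one for it.

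The ingredient you are missing is a global theorem of Serre (\cite{SerP} p.~307), which is what the paper uses: for a $p$-isogeny of semistable elliptic curves over $\Q$ with $p$ odd, if $\ker\phi\not\subset E(\Q)$ then $\ker\phi^t\subset E'(\Q)$. The reason this is true is irreducibly global: semistability at all $l\ne p$ makes the isogeny character unramified outside $p$, semistability at $p$ makes one of the two characters (on $\ker\phi$ or on $\ker\phi^t$) unramified at $p$ as well, and an everywhere-unramified character of $\Gal(\bar\Q/\Q)$ is trivial because $\Q$ admits no nontrivial abelian everywhere-unramified extension (Minkowski). Given this dichotomy the proof is short: if $\ker\phi\subset E(\Q)$, then Lemmas~\ref{pullmin} and \ref{lnep:omega} give $\frac{\omega}{\phi^*\omega'}=\pm1$ and Lemma~\ref{Omegassp} gives $\frac{\Omega}{\Omega'}=\frac{\Omega_\C}{\Omega'_\C}=p$; otherwise apply this to $\phi^t$ and use $\frac{(\phi^t)^*\omega}{\omega'}\cdot\frac{\phi^*\omega'}{\omega}=p$ to get $\frac{\omega}{\phi^*\omega'}=\pm\frac1p$ and $\frac{\Omega}{\Omega'}=p^{-1}$, so the value $1$ never occurs and all the equivalences follow. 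Your duality step in the last paragraph is the right move, but without Serre's theorem there is no rational kernel on either side to which Lemma~\ref{Omegassp} can be applied, and the argument cannot get started.
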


%


\begin{proof}
If $\ker\phi\subset E(\Q)$, then 
$\frac{\omega}{\phi^*\omega'}$ is a $p$-adic unit 
by Lemma \ref{pullmin} and unit at all other primes by Lemma \ref{lnep:omega},
so it is $\pm 1$. Also, $\frac{\Omega}{\Omega'}\!=\!\frac{\Omega_\C}{\Omega'_\C}\!=\!p$
by Lemma~\ref{Omegassp}.
If $\ker\phi\not\subset E(\Q)$, then by a result of Serre (\cite{SerP} p. 307),
$\ker\phi^t\subset E'(\Q)$. The result now follows from that for $\phi^t$, as
$$
  p\cdot
  \frac{\omega}{\phi^*\omega'}=
  \frac{\phi^*(\phi^t)^*\omega}{\phi^*\omega'}=
  \frac{(\phi^t)^*\omega}{\omega'}=\pm 1.
$$
\end{proof}


\section*{Appendix A. Tate curve and quadratic twists}
\def\thesection{A}
\refstepcounter{section}
\label{s:apptate}

For completeness, we recall the following well-known facts.
These concern the Tate curve and quadratic twists of elliptic curves, and
do not assume that $E$ admits a $p$-isogeny.
As usual, $K$ is a finite extension of $\Q_l$, and the notation is as in \S1.1.

\begin{theorem}
\label{tatecurve}
An elliptic curve $E/K$ with split multiplicative reduction of type $\In{n}$ 
is isomorphic to a Tate curve $E^{(q)}/K$ for some Tate parameter 
$q\in \m_K$ with $v(q)=n=\delta=-v(j)=c$. For a prime $p$,
$$
  E^{(q)}(\bar K) \iso \bar K^\times/q^\Z \qquad\text{and}\qquad
  E^{(q)}[p]\iso \langle \zeta_p, \sqrt[p]q\rangle
$$
as Galois modules. There is a $K$-rational $p$-isogeny
\beq
  E^{(q)}(K) &=& K^\times/q^\Z &\lar& K^\times/q^{p\Z} &=& E^{(q^p)}(K) \cr
         &&      z         &\mapsto& z^p.\cr
\eeq
The other $p$-isogenies from $E^{(q)}$ are parametrised by choices of a $p$th root
of $q$ in $\bar K^\times$, and given by 
\beq
  E^{(q)}(K) &=& K^\times/q^\Z &\lar& K^\times/(\!\sqrt[p]q)^{\Z} &=& E^{(\sqrt[p]q)}(K) \cr
         &&      z         &\mapsto& z.\cr
\eeq
Such an isogeny is defined over $K$ if and only if $\sqrt[p]q\in K$.
\end{theorem}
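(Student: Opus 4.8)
The plan is to reduce the whole statement to the standard analytic theory of the Tate curve, citing \cite{Sil2} \S V for the uniformisation, and then to carry out the elementary bookkeeping of $p$-torsion and cyclic subgroups by hand. First I would recall that a curve $E/K$ with split multiplicative reduction admits a Tate parameter $q\in\m_K$ together with a $\Gal(\bar K/K)$-equivariant isomorphism $E^{(q)}(\bar K)\cong\bar K^\times/q^\Z$ (\cite{Sil2} V.3, V.5). The numerical identities $v(q)=n=\delta=-v(j)=c$ would then follow from the $q$-expansions $j=q^{-1}+744+\cdots$ and $\Delta=q\prod_{n\ge1}(1-q^n)^{24}$, which give $v(j)=-v(q)$ and $\delta=v(\Delta)=v(q)$, combined with the fact that split $\In{n}$ reduction has conductor exponent $1$ (so $m=\delta$ by Ogg's formula) and a fully rational cyclic component group of order $m$, whence $c=m=v(q)$.

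Next I would read off the $p$-torsion directly from the uniformisation: a class $u\bmod q^\Z$ is killed by $p$ exactly when $u^p\in q^\Z$, i.e. $u^p=q^k$. Modulo $q^\Z$ these classes are generated by $\zeta_p$ (with $\zeta_p^p=1$) and by a fixed choice of $\sqrt[p]q$ (with $(\sqrt[p]q)^p=q$), giving $E^{(q)}[p]\cong\langle\zeta_p,\sqrt[p]q\rangle\cong(\Z/p\Z)^2$ as claimed. The Galois action is then immediate: $\zeta_p$ transforms by the cyclotomic character, and $\sigma(\sqrt[p]q)/\sqrt[p]q\in\mu_p$ since $\sigma(q)=q$.

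The $p$-isogenies from $E^{(q)}$ correspond to the $p+1$ order-$p$ subgroups (lines) of $E^{(q)}[p]$. The line $\langle\zeta_p\rangle=\mu_p$ is always $\Gal$-stable, and the quotient is realised by $z\mapsto z^p$ from $\bar K^\times/q^\Z$ onto $\bar K^\times/q^{p\Z}$, whose kernel over $\bar K$ is precisely $\mu_p$; as $q^p\in K$ and raising to the $p$th power commutes with Galois, the target $E^{(q^p)}$ and the isogeny are automatically defined over $K$. Each of the remaining $p$ lines is $\langle\zeta_p^a\sqrt[p]q\rangle$ for one of the $p$ choices of $p$th root of $q$, and the identity map $z\mapsto z$ induces $\bar K^\times/q^\Z\to\bar K^\times/(\sqrt[p]q)^\Z$ with kernel $\langle\sqrt[p]q\rangle$, exhibiting the quotient as the Tate curve $E^{(\sqrt[p]q)}$.

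Finally, for the rationality criterion I would check that $\langle\sqrt[p]q\rangle$ is $\Gal$-stable if and only if $\sqrt[p]q\in K$: writing $\sigma(\sqrt[p]q)=\zeta_p^{k(\sigma)}\sqrt[p]q$, stability of this line forces $k(\sigma)=0$ for all $\sigma$, i.e. $\sqrt[p]q\in K$; equivalently, the quotient Tate curve $E^{(\sqrt[p]q)}$ descends to $K$ exactly when its parameter does. The bulk of the argument is citation plus routine bookkeeping, and I expect the only point needing genuine care — the main, if modest, obstacle — to be keeping the Galois action on $\langle\zeta_p,\sqrt[p]q\rangle$ straight in the last two steps, so as to cleanly separate the always-rational $\mu_p$-quotient $z\mapsto z^p$ from the quotients by $\langle\zeta_p^a\sqrt[p]q\rangle$ and to pin down the precise condition $\sqrt[p]q\in K$.
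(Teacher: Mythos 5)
Your proposal is correct in outline and, at the top level, does what the paper does: reduce everything to the standard theory of the Tate curve. The difference is in what gets cited versus what gets proved. The paper's proof consists entirely of citations --- \cite{Sil2} \S V.3--V.5 for the basic theory (uniformisation and the numerical identities), and \cite{SerA} \S A.1.4, together with the function-field description of $E_q$ in \S A.1.1, for \emph{all} of the isogeny statements. You cite only the uniformisation and then reconstruct the isogeny statements by hand from the Galois module $E^{(q)}[p]\iso\langle\zeta_p,\sqrt[p]q\rangle$: the count of the $p+1$ lines, the identification of the quotients, and the rationality criterion. That bookkeeping is sound; in particular your claim that stability of the line $\langle\sqrt[p]q\rangle$ forces $k(\sigma)=0$ is right (it follows by comparing valuations: an element $\zeta_p^{k}\sqrt[p]q$ can only equal $(\sqrt[p]q)^m q^j$ when $m+pj=1$, so $m=1$, $j=0$), though you should record that one-line argument rather than just assert it.

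The one genuine soft spot in your route is the step from group theory to algebraic geometry. You establish that $z\mapsto z^p$ and $z\mapsto z$ are Galois-equivariant group homomorphisms $\bar K^\times/q^\Z\to\bar K^\times/q^{p\Z}$ and $\bar K^\times/q^\Z\to\bar K^\times/(\sqrt[p]q)^\Z$ with the stated kernels, but the theorem asserts these are \emph{isogenies of elliptic curves}. A Galois-equivariant homomorphism between the point groups of two elliptic curves is not automatically induced by a morphism of varieties, and two elliptic curves with Galois-isomorphic point groups need not be isomorphic; so identifying the algebraic quotient of $E^{(q)}$ by the relevant kernel with the Tate curve $E^{(q^p)}$ (resp.\ $E^{(\sqrt[p]q)}$) requires an analytic-to-algebraic bridge: either rigid-analytic GAGA or Serre's explicit construction of the isogeny via the function field of $E_q$. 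This is precisely the content of the references \cite{SerA} \S A.1.1, A.1.4 that the paper's proof points to, and it is the only ingredient your argument is missing; adding that citation (or reproducing the function-field argument) makes your proof complete. What your version buys in exchange is transparency: the parametrisation of the $p$ non-cyclotomic isogenies by $p$th roots of $q$ and the criterion $\sqrt[p]q\in K$ come out of an explicit computation rather than being quoted.
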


\begin{proof}
For the basic theory of the Tate curve, see \cite{Sil2} \S V.3-V.5.
For the statements about isogenies, see \cite{SerA} \S A.1.4 
(Theorem and the proof of (2)$\implies$(1)), 
and the description of the function field of $E_q$ in \S A.1.1.
\end{proof}

\begin{lemma}
\label{quadmult}
If $E/K$ has potentially multiplicative reduction, both the 
quadratic twist of $E$ by $-c_6$ and $E/K(\sqrt{-c_6})$ have
split multiplicative reduction.
Here $c_6$ is the standard invariant of $E$ as in {\rm\cite{Sil1} \S III.1}. 
\end{lemma}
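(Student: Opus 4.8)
The statement to prove is Lemma~\ref{quadmult}: for a curve $E/K$ with potentially multiplicative reduction, both the quadratic twist $E_{-c_6}$ and the base-change $E/K(\sqrt{-c_6})$ have split multiplicative reduction. Here $c_6$ is the standard invariant.

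The plan is to use the characterization of potentially multiplicative reduction in terms of the $j$-invariant together with the explicit effect of a quadratic twist on the Weierstrass data. First I would recall that potentially multiplicative reduction means $v(j)<0$, and that such a curve becomes multiplicative (split or nonsplit) over \emph{some} quadratic extension, or is already multiplicative over $K$. The key point is that whether multiplicative reduction is split or nonsplit is governed by a single quadratic character of $G_K$, and I want to identify that character precisely as the one cut out by $K(\sqrt{-c_6})/K$. Concretely, if $E$ has (possibly nonsplit) multiplicative reduction, the splitting is detected by whether $-c_6$ is a square in $K$: this is the standard criterion that the reduction is split exactly when $-c_6\in K^{\times 2}$ (equivalently when the tangent directions at the node are rational), which one reads off from Tate's algorithm or the reduction of the Weierstrass model.

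The main steps, in order, would be: (i) reduce to the case where $E$ already has multiplicative reduction over $K$, handling the potentially multiplicative-but-additive case by passing through the appropriate ramified quadratic twist; (ii) invoke the split/nonsplit dichotomy criterion phrased via $-c_6$, so that twisting by $-c_6$ forces splitness — twisting by $d$ multiplies the relevant quadratic character by the character of $K(\sqrt d)/K$, and choosing $d=-c_6$ cancels the nonsplit character; (iii) observe symmetrically that over the extension $F=K(\sqrt{-c_6})$ the element $-c_6$ becomes a square, so $E/F$ satisfies the split criterion directly. The behaviour of $c_6$ under quadratic twist is explicit: $c_6$ scales by $d^3$ (cf.~\cite{Sil1} Table III.1.2), so $-c_6$ scales by $d^3$ as well and $-c_{6,E_d}=d^3(-c_6)$; taking $d=-c_6$ makes this a square times $-c_6$ which lands in the square class making the twist split. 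I would verify this square-class bookkeeping carefully since it is the crux.

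The main obstacle I anticipate is the bookkeeping in step~(ii): precisely pinning down the quadratic character governing split-versus-nonsplit reduction and confirming it equals the character of $K(\sqrt{-c_6})/K$, rather than its twist by something, and checking this uniformly in residue characteristic (the case $l=2$ deserves separate care since the notion of ``square class'' and the standard invariants behave differently there). Once the character is correctly identified, both conclusions follow formally: twisting by $-c_6$ trivializes the character (giving split reduction), and base-changing to $K(\sqrt{-c_6})$ also trivializes it (again giving split). I would lean on the explicit Tate-curve description in Theorem~\ref{tatecurve} and the transformation formulae for $c_6$ under twisting to make the square-class argument rigorous, treating the residue characteristic $2$ case by hand if the general argument does not apply cleanly.
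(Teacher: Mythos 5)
Your proposal is correct and is essentially the same approach the paper takes: the paper's entire proof is a citation to \cite{Sil2} \S V.5, whose content is precisely the theory you outline — every curve with $v(j)<0$ is a quadratic twist of a Tate curve, with split/nonsplit/additive reduction according to whether the twisting class (represented by $-c_6$, equivalently $-c_4/c_6$) is trivial, unramified or ramified. Your flagged concern about $l=2$ is the right one, and your proposed fix (running the square-class argument through the Tate parametrization, where $-c_6=E_6(q)\equiv 1$ modulo a high power of $\m_K$ and hence is a square in all residue characteristics, rather than through tangent directions of the reduced curve) is exactly how the cited argument handles it.
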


\begin{proof}
See \cite{Sil2} \S V.5.
\end{proof}

\begin{lemma}
\label{quadtwist}
Let $E/K: y^2=f(x)$ be an elliptic curve with additive 
potentially good reduction. Then the following are equivalent:
\begin{enumerate}
\item $E$ has good reduction over a 
  quadratic extension $K(\sqrt d)$.
\item The quadratic twist $E_d/K: dy^2=f(x)$ has good reduction.
\item The inertia group $\Gal(\bar K/K^{nr})$ acts on the $\ell$-adic 
Tate module of $E$ ($\ell\ne l$) through $\Gal(K^{nr}(\sqrt d)/K^{nr})$.
\end{enumerate}
%
\end{lemma}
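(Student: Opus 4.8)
The final statement is Lemma~\ref{quadtwist}, which asserts the equivalence of three conditions for a curve $E/K$ with additive potentially good reduction: (1) $E$ acquires good reduction over $K(\sqrt d)$, (2) the twist $E_d$ has good reduction, and (3) inertia acts on the Tate module through $\Gal(K^{nr}(\sqrt d)/K^{nr})$. The plan is to prove this as a cycle of implications, $(1)\Rightarrow(3)\Rightarrow(2)\Rightarrow(1)$, using the criterion of N\'eron--Ogg--Shafarevich throughout as the bridge between ``good reduction'' and ``unramified Tate module.''

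First I would recall that by N\'eron--Ogg--Shafarevich, an elliptic curve over a local field has good reduction if and only if inertia $\Gal(\bar K/K^{nr})$ acts trivially on its $\ell$-adic Tate module (for any $\ell\ne l$). For $(1)\Rightarrow(3)$: if $E$ has good reduction over $L=K(\sqrt d)$, then inertia of $L$, namely $\Gal(\bar K/L^{nr})=\Gal(\bar K/K^{nr}(\sqrt d))$, acts trivially on $T_\ell E$; hence the inertia group $\Gal(\bar K/K^{nr})$ acts through the quotient $\Gal(K^{nr}(\sqrt d)/K^{nr})$, which is condition (3). For $(2)\Leftrightarrow(3)$, the key observation is that the Tate modules of $E$ and its quadratic twist $E_d$ are related by the quadratic character $\chi_d$ cutting out $K(\sqrt d)$: there is an isomorphism of Galois representations $T_\ell E_d \cong T_\ell E \otimes \chi_d$. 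Restricting to inertia, $E_d$ has unramified (trivial) inertia action if and only if the inertia action on $T_\ell E$ agrees with $\chi_d$ on inertia, which is precisely the statement that inertia acts through $\Gal(K^{nr}(\sqrt d)/K^{nr})$ as a scalar $\pm 1$ governed by $\chi_d$. This gives $(2)\Leftrightarrow(3)$, and in particular $(3)\Rightarrow(2)$; the implication $(2)\Rightarrow(1)$ is immediate since $E$ and $E_d$ become isomorphic over $K(\sqrt d)$, so good reduction of $E_d$ over $K$ yields good reduction of $E$ over $K(\sqrt d)$.

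The one point requiring care is the direction of $(3)\Rightarrow(2)$, where I must check that the inertia action factoring through the order-two quotient $\Gal(K^{nr}(\sqrt d)/K^{nr})$ really forces the twisted module to be unramified, rather than merely tamely ramified. Here I would use that under hypothesis (3) the nontrivial inertia element acts on $T_\ell E$ as an involution, and since $E$ has potentially good reduction this involution must be the scalar $-1$ (the only order-two element of $\Aut T_\ell E$ commuting with the good-reduction representation that is compatible with the determinant being the cyclotomic character); twisting by $\chi_d$, which sends the same inertia element to $-1$, cancels this sign and leaves an unramified action, so $E_d$ has good reduction.

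The main obstacle I anticipate is making precise the claim that the inertia action in condition (3) is exactly the scalar $\chi_d$ and not some other factorization through the two-element group. One must rule out, for instance, a nontrivial unipotent or non-scalar involution; this is where the \emph{potentially good} hypothesis (as opposed to potentially multiplicative) is essential, since it guarantees the relevant representation is potentially unramified and hence, on the order-two inertia image, diagonalizable as $\pm 1$. Once this is pinned down the three implications close up cleanly, and the whole argument is really just N\'eron--Ogg--Shafarevich combined with the twisting relation $T_\ell E_d\cong T_\ell E\otimes\chi_d$.
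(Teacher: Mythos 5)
Your proof is correct and takes essentially the same approach as the paper, which proves the lemma in one sentence by citing the criterion of N\'eron--Ogg--Shafarevich together with the twisting relation $T_\ell E_d\cong T_\ell E\otimes\chi_d$. Your write-up merely fills in the details the paper leaves implicit, including the one genuinely non-obvious point: that the inertia involution on $T_\ell E$ must be the scalar $-1$ (which follows since $\det$ of the inertia action is trivial, the $\ell$-adic cyclotomic character being unramified for $\ell\ne l$, so the involution has determinant $1$ and eigenvalues $\pm 1$, hence equals $\pm\mathrm{id}$, and additivity rules out $+\mathrm{id}$).
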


\begin{proof}
This follows from the criterion of N\'eron-Ogg-Shafarevich, and the fact
that the Tate module of $E_d$ is the Tate module module of $E$ twisted by
the character of $\Gal(K(\sqrt d)/K)$ of order 2.
\end{proof}

\begin{lemma}
\label{discquadtwist}
Let $E/K$ be an elliptic curve, and $E_d/K$ its quadratic twist by $d\in K^\times$.
Then the minimal discriminants of $E$ and $E_d$ are related by $\Delta_{E/K} = d^6 u^{12} \Delta_{E_d/K}$
for some $u\in K^\times$.
\end{lemma}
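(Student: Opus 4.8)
The plan is to reduce everything to a single explicit Weierstrass computation, exploiting that $K$ has characteristic $0$ (it is a finite extension of $\Q_l$), so a short Weierstrass model is always available regardless of the residue characteristic $l$. First I would recall the standard fact, already invoked in the proof of Lemma \ref{delquo} via \cite{Sil1} Table III.1.2, that the discriminants of any two Weierstrass models of a single elliptic curve over $K$ differ by a factor in $(K^\times)^{12}$; in particular the discriminant of \emph{any} model equals the minimal discriminant up to a 12th power in $K^\times$. It therefore suffices to exhibit one Weierstrass model of $E$ and one of $E_d$ whose discriminants differ by exactly $d^6$.

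Since $\operatorname{char}K=0$, I can write $E$ in depressed short Weierstrass form $y^2=x^3+Ax+B$, with discriminant $D=-16(4A^3+27B^2)$. The twist $E_d:\,dy^2=x^3+Ax+B$ of Remark \ref{twistisog} is not yet a Weierstrass equation, so I would clear it via the substitution $(x,y)\mapsto(X,Y)=(dx,d^2y)$ to obtain the honest model $Y^2=X^3+Ad^2X+Bd^3$. A direct computation then gives its discriminant $-16\bigl(4A^3d^6+27B^2d^6\bigr)=d^6D$, which is the required relation between the two model discriminants.

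Finally I would combine the two steps. Writing $D=v_1^{12}\Delta_{E/K}$ and $d^6D=v_2^{12}\Delta_{E_d/K}$ with $v_1,v_2\in K^\times$ (using the first step for each curve separately), eliminating $D$ yields $\Delta_{E/K}=d^{-6}(v_2/v_1)^{12}\Delta_{E_d/K}$. Absorbing the factor $d^{-12}$ into the 12th power, i.e. writing $d^{-6}=d^6\cdot(d^{-1})^{12}$, gives $\Delta_{E/K}=d^6u^{12}\Delta_{E_d/K}$ with $u=v_2 v_1^{-1}d^{-1}\in K^\times$, as claimed.

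The argument is essentially routine, so there is no serious obstacle; the only points that need care are bookkeeping. I would double-check that the twist model $dy^2=f(x)$ of Remark \ref{twistisog} really produces the $d^6$-scaling of the discriminant (equivalently the scaling $(c_4,c_6,\Delta)\mapsto(d^2c_4,d^3c_6,d^6\Delta)$), and keep track of the stray powers $d^{\pm12}$ so that the residual ambiguity is genuinely a 12th power in $K^\times$ rather than in some extension. I would also emphasise that no difficulty arises when $l\in\{2,3\}$: it is $\operatorname{char}K=0$, not the residue field, that licenses the short Weierstrass form, so the computation is uniform in $l$.
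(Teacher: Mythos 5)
Your proof is correct and takes essentially the same route as the paper: the paper likewise passes to short Weierstrass models $y^2=x^3+ax+b$ and $y^2=x^3+d^2ax+d^3b$, notes their discriminants differ by $d^6$, and absorbs the remaining ambiguity into 12th powers. Your version just makes explicit the substitution $(x,y)\mapsto(dx,d^2y)$ converting $dy^2=f(x)$ into that standard twist model and the bookkeeping $d^{-6}=d^6(d^{-1})^{12}$, both of which the paper leaves implicit.
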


\begin{proof}
Choose models $E: y^2=x^3+ax+b$  and $E_d: y^2=x^3+d^2ax+d^3b$.
Their discriminants differ by $d^6$, and they differ from the 
minimal discriminants by 12th powers.
\end{proof}

\begin{theorem}[\cite{Lor} Thm 2.8]
\label{lorthm}
Suppose $E/K$ has multiplicative reduction of type $\In n$, so $n=\delta=-v(j)$.
Let $K(\sqrt d)/K$ be a quadratic extension, and $\chi:\Gal(K(\sqrt d)/K)\to\pm 1$ 
the corresponding character.
Then the quadratic twist $E_d$ has potentially multiplicative 
reduction of type $\InS{n+4f_\chi-4}$, where
$f_\chi$ is the conductor exponent of $\chi$. 
If $p\ne 2$, then $f_\chi=1$ and the type is $\InS n$.
\end{theorem}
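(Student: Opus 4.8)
The plan is to reduce to a Tate curve, twist its $\ell$-adic representation to read off the conductor, compute the minimal discriminant, and then pin down the Kodaira type by combining the two via Ogg's formula. First I would reduce to the split case: by Lemma~\ref{quadmult} and the observation that multiplying $\chi$ by an unramified quadratic character changes neither $f_\chi$ nor the reduction type of $E_d$ over $K$, I may assume $E=E^{(q)}$ with $v(q)=n$ (Theorem~\ref{tatecurve}), and that $\chi$ is ramified (for unramified $\chi$ the twist stays of type $\In n$, and the $\InS{}$ formula is the ramified case, where $f_\chi\ge 1$). Over $F=K(\sqrt d)$ the curve $E_d$ becomes isomorphic to $E^{(q)}$, which is multiplicative, so $E_d/K$ is additive but potentially multiplicative, hence of type $\InS{n'}$ for some $n'\ge 0$; the whole problem is to compute $n'$.

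Next I would compute the conductor exponent of $E_d$ from the $\ell$-adic representation $V=V_\ell(E)$ ($\ell\ne l$), using $\rho_{E_d}\iso\rho_E\otimes\chi$. For split multiplicative $E$, inertia acts by a nontrivial unipotent with $\dim V^{I}=1$ and trivial Swan, so the conductor exponent of $E$ is $\dim(V/V^I)=1$; after twisting by the ramified $\chi$ one checks $(V\otimes\chi)^{I}=0$, while wild inertia acts on $V\otimes\chi$ as two copies of $\chi$, so its Swan conductor is $2\,\mathrm{Sw}(\chi)=2(f_\chi-1)$. Hence the conductor exponent is $f(E_d)=2+2(f_\chi-1)=2f_\chi$. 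Separately I would compute the minimal discriminant: over $F$ the curve $E_d$ is of type $\In{2n}$, so $\delta_{E_d/F}=2n$, and comparing the $K$-minimal model with the $F$-minimal one gives $\delta_{E_d/K}=n+6w$, where $w\ge 0$ measures the failure of a $K$-minimal differential to be $F$-minimal; I would show $w=f_\chi$, i.e. $\delta_{E_d/K}=n+6f_\chi$. Feeding both quantities into Ogg's formula $\delta_{E_d/K}=f(E_d)+m'-1$ with $m'=n'+5$ for type $\InS{n'}$ then yields
$$
  n' = \delta_{E_d/K}-f(E_d)-4 = (n+6f_\chi)-2f_\chi-4 = n+4f_\chi-4,
$$
and when the residue characteristic is odd $\chi$ is tame, $f_\chi=1$, and the type is $\InS n$.

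The main obstacle is the discriminant computation in the wildly ramified case, $l=2$, both because $f_\chi$ itself may be $2$ or $3$ and because establishing $\delta_{E_d/K}=n+6f_\chi$ (equivalently $w=f_\chi$) is not formal. In the tame case the twisted model $y^2=x^3+d^2a_2x^2+\cdots$ has $v(c_4)=2<4$, is visibly minimal, and gives $\delta_{E_d/K}=n+6$ directly; but for $l=2$ one must run Tate's algorithm on an explicit model of $E_d$ across the relevant square classes of $d$, tracking the wild ramification. The discriminant-twist relation $\Delta_{E/K}=d^6u^{12}\Delta_{E_d/K}$ of Lemma~\ref{discquadtwist} only controls $\delta_{E_d/K}$ modulo $12$, so it does not suffice on its own, and this explicit minimization is exactly the technical heart of the argument.
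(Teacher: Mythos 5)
The paper offers no proof of this statement at all: it is imported wholesale from \cite{Lor}, Thm.~2.8, so there is no internal argument to compare against, and your attempt must be judged on its own merits. Most of it is sound. The reduction to a split Tate curve with $\chi$ ramified is correct (an unramified twist of $\In n$ is again $\In n$, so the $\InS{}$ formula is indeed the ramified case, and replacing $\chi$ by $\chi$ times an unramified character changes neither $f_\chi$ nor $E_d$). The conductor computation is also correct: wild inertia acts trivially on $V_\ell(E)$ for a Tate curve, so on $V_\ell(E)\otimes\chi$ it acts as two copies of $\chi$, giving $f(E_d)=\dim\bigl(V/(V\otimes\chi)^I\bigr)+2\,\mathrm{Sw}(\chi)=2+2(f_\chi-1)=2f_\chi$; and Ogg's formula with $m=n'+5$ for type $\InS{n'}$ is the right bookkeeping. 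In the tame case $l\ne 2$ your argument is genuinely complete, since the twisted model has $v(c_4)=2<4$ and is therefore minimal, giving $\delta_{E_d/K}=n+6$, whence $n'=n$.

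For $l=2$, however, there is a real gap, and you have located it yourself: the claim $\delta_{E_d/K}=n+6f_\chi$ (your $w=f_\chi$). Given your conductor formula and Ogg's formula, this claim is \emph{equivalent} to the theorem, since $n'=\delta_{E_d/K}-f(E_d)-4$; so in the wild case your proposal does not prove the statement, it reformulates it as an unproven discriminant identity. Saying that one "must run Tate's algorithm on an explicit model of $E_d$ across the relevant square classes of $d$" names the work rather than doing it, and this is exactly where the content lies: for a general $2$-adic $K$ the ramified quadratic characters have $2\le f_\chi\le 2v_K(2)+1$, so the formula $\InS{n+4f_\chi-4}$ says strictly more than the tame statement precisely in the cases you leave open. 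This wild minimalization (equivalently, determining how far the $K$-minimal differential of $E_d$ is from being minimal over $K(\sqrt d)$) is the technical heart of Lorenzini's Theorem 2.8, which is why the paper cites it rather than reproving it; to close your argument you would need to carry out that computation, e.g.\ by explicitly minimalizing a twisted Tate model over $K$ as a function of $v_K(d)$ and the square class of $d$, rather than appealing to it.
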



\section*{Appendix B. Values of modular forms}
\def\thesection{B}
\refstepcounter{section}
\label{s:appmod}

We briefly recall the well-known connection between values of modular forms on
$\Gamma_0(N)$ and invariants of elliptic curves with a cyclic $N$-isogeny. 
In the proof of Theorem~\ref{deltaquo}, we used the exact relation 
\eqref{deldel} between the discriminant of an elliptic curve and the value 
of the modular $\Delta$-function, and rationality properties of values
of modular forms. For the latter, the modern approach of Katz
goes via the $q$-expansion principle (see \cite{Katz} or \cite{DI}).
For convenience of the reader, we also give a low-tech description,
which relies only on classical results (Theorem \ref{modeval}).

%

Let $E/\C$ be an elliptic curve with an invariant differential $\omega$.
Put $(E,\omega)$ in the form
\begin{equation}
\label{E4eq}
  E: y^2 = 4x^3 + ax + b, \qquad \omega=\frac{dx}y.
\end{equation}
By the uniformisation theorem, there 
is a unique lattice $\Lambda=\Z\Omega_1+\Z\Omega_2\subset\C$ such that
$$
\begin{array}{cll}
  \phi: \C/\Lambda & \lar & E(\C) \cr
           z       & \longmapsto & (\wp_\Lambda(z),\wp'_\Lambda(z)) \cr
\end{array}
$$
is an isomorphism (of complex Lie groups), and $\phi^*\frac{dx}y=dz$.
Here $\wp_\Lambda(z)$ is the Weierstrass $\wp$-function
$$
  \wp_\Lambda(z) = \frac{1}{z^2} + \sum_{v\in\Lambda\setminus\{0\}} 
    \Bigl( 
    \frac{1}{(z-v)^2}-\frac{1}{v^2} \Bigr).
$$
The coefficients of $E$ are 
$a=-60G_4(\Lambda)$ and $b=-140G_6(\Lambda)$,
where the $G_k$ are the standard modular functions
$$
  G_k(\Lambda) = \sum_{v\in\Lambda\setminus\{0\}} v^{-k}.
$$
Let $\tau=\frac{\Omega_2}{\Omega_1}$,
changing the sign of $\Omega_2$ if necessary to get $\tau\in\bH$.
Write $\Lambda_\tau = \Z\tau+\Z$ and $q=e^{2\pi i \tau}$.
Then 
$$
  \Lambda=\Omega_1\Lambda_\tau, \qquad G_k(\Lambda)=\Omega_1^{-k} G_k(\Lambda_\tau),
$$
and $\tau\mapsto G_k(\Lambda_\tau)$ is, up to a constant, the 
Eisenstein series of weight $k$,
$$
  E_k(\tau) = \frac{1}{2\zeta(k)} G_k(\Lambda_\tau) = 
    1 + \frac{2}{\zeta(1-2k)} \sum_{n=1}^\infty \frac{n^{2k-1}q^n}{1-q^n}.
$$
The modular discriminant function $\Delta(\tau)=\frac{1}{1728}(E_4(\tau)^3-E_6(\tau)^2)$
satisfies
\begin{equation}
\label{deldel}
  \Delta(\tau) = 
    -16\Bigl(4(\tfrac{a}{4})^3+27(\tfrac{b}{4})^2\Bigr)
    \cdot \Bigl(\frac{\Omega_1}{2\pi}\Bigr)^{12} = 
      \Bigl(\frac{\Omega_1}{2\pi}\Bigr)^{12}\Delta_E,
\end{equation}
where $\Delta_E$ is the discriminant of 
the Weierstrass model $y^2 = x^3 + \tfrac a4 x+\tfrac b4$ of $E$
obtained by rescaling $y\mapsto 2y$ in \eqref{E4eq} 
(so $\frac{dx}{y}$ becomes $\frac{dx}{2y}$).


Now suppose that the pair $(E,\omega)$ is defined over a subfield $\cK\subset \C$. 
Then $a, b, \Delta\in \cK$, and therefore 
$$
  \bigl(\tfrac{2\pi}{\Omega_1}\bigr)^4 E_4(\tau), \quad
  \bigl(\tfrac{2\pi}{\Omega_1}\bigr)^6 E_6(\tau), \quad 
  \bigl(\tfrac{2\pi}{\Omega_1}\bigr)^{12} \Delta(\tau) \quad\in\>\> \cK.
$$
In fact, suppose $f(\tau)$ is any modular form on $\Gamma_0(N)$ whose 
$q$-expansion has $\cK$-rational coefficients. 
For any choice of non-negative integers $m,n,u$ with $4m+6n+k=12u$, the form
$\tilde f=f E_4^m E_6^n/\Delta^u$ on $\Gamma_0(N)$ has weight 0 (i.e. is 
a modular function). 
By a classical theorem (see \cite{Cox} Thm. 11.9(b)), 
$$
  \tilde f(\tau)=F(j(\tau),j(N\tau))
$$  
for some rational function $F\in\C(x,y)$. In fact, $F\in \cK(x,y)$ since 
$F$ has a $\cK$-rational $q$-expansion.%
\footnote{This is clear for rational functions
of $j(\tau)$, since the $q$-expansion of $j(\tau)$ is rational, hence Galois
invariant, and $\C(t)^{\Aut(\C/\cK)}=\cK(t)$; in general, write 
$\tilde f$ as a unique polynomial in $j(N\tau)$ with coefficients 
in $\C(j(\tau))$ of degree $<n$, where $n$ is the degree of the 
modular polynomial $\Phi_N(x,y)$ relating $j(\tau)$ and $j(N\tau)$, and apply
the same Galois invariance argument to it and its coefficients.}
Summarising the whole discussion, we have

\begin{theorem}
\label{modeval}
Let $f\in M_k(\Gamma_0(N))$ be a modular form whose $q$-expansion has 
$\cK$-rational coefficients, $\cK\subset \C$.
There are natural numbers $m,n,u$ and a rational function $F\in \cK(x,y)$ 
such that for every cyclic isogeny of elliptic curves 
$\phi: E\to E'$ of degree $N$, with $E$ in the form \eqref{E4eq} 
with corresponding complex
lattices $\Lambda=\Z\Omega_1+\Z\Omega_2\subset\C$,
$\Lambda'=\Z\Omega_1+\Z N\Omega_2\subset\C$, we have
$$
  \bigl(\tfrac{2\pi}{\Omega_1}\bigr)^k f(\tfrac{\Omega_2}{\Omega_1}) 
     = \frac{a^m b^n}{\Delta_E^u} F(j(E),j(E')).
$$
In particular, if $E$ and $E'$ are defined over $\cK$,
the left-hand side lies in~$\cK$.
\end{theorem}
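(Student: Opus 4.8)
The statement summarises the preceding discussion, and proving it amounts to assembling the ingredients already in place. The plan is to multiply $f$ by a weight-zero correction built from $E_4, E_6$ and $\Delta$, recognise the product as a rational function of the two $j$-invariants via the function field of $X_0(N)$, and then translate back to the curve using the homothety relations $a=-60G_4(\Lambda)$, $b=-140G_6(\Lambda)$ together with \eqref{deldel}. The only genuinely new point is that this rational function has coefficients in $\cK$, which comes from Galois descent.

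First I would fix non-negative integers $m,n,u$ with $4m+6n=k+12u$; these exist because $k$ is even (as $f$ has trivial character) and every even integer $\ge 4$ is of the form $4m+6n$, so it is enough to take $u$ large. Putting $\Delta$ in the numerator, set
\[
  \tilde f \;=\; \frac{f\,\Delta^{u}}{E_4^{\,m}\,E_6^{\,n}},
\]
a modular function (weight $k+12u-4m-6n=0$) on $\Gamma_0(N)$ whose $q$-expansion still has $\cK$-rational coefficients. Since the function field of $X_0(N)$ is $\C\bigl(j(\tau),j(N\tau)\bigr)$, the classical theorem \cite{Cox} Thm.~11.9(b) gives $\tilde f(\tau)=F\bigl(j(\tau),j(N\tau)\bigr)$ for some $F\in\C(x,y)$.

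Next I would substitute the scaling relations. From $a=-60G_4(\Lambda)$, $G_4(\Lambda)=\Omega_1^{-4}G_4(\Lambda_\tau)$ and $E_4=\tfrac1{2\zeta(4)}G_4(\Lambda_\tau)$ one gets that $\bigl(\tfrac{2\pi}{\Omega_1}\bigr)^4 E_4(\tau)$ is a fixed nonzero rational multiple of $a$, and likewise $\bigl(\tfrac{2\pi}{\Omega_1}\bigr)^6 E_6(\tau)$ of $b$, while $\bigl(\tfrac{2\pi}{\Omega_1}\bigr)^{12}\Delta(\tau)=\Delta_E$ by \eqref{deldel}. Writing $f=\tilde f\,E_4^{\,m}E_6^{\,n}/\Delta^{u}$ and multiplying by $\bigl(\tfrac{2\pi}{\Omega_1}\bigr)^k$, the chosen balance $4m+6n-12u=k$ makes every power of $\Omega_1/2\pi$ cancel, leaving
\[
  \bigl(\tfrac{2\pi}{\Omega_1}\bigr)^k f(\tau)
  =\frac{a^{m}b^{n}}{\Delta_E^{\,u}}\,\tilde F\bigl(j(\tau),j(N\tau)\bigr),
\]
where $\tilde F$ is $F$ rescaled by the (nonzero, $\cK$-rational) constants above. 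Finally $\Lambda=\Omega_1\Lambda_\tau$ gives $j(\tau)=j(E)$, and $\Lambda'=\Omega_1(\Z+\Z N\tau)$ gives $j(N\tau)=j(E')$, which is the displayed formula.

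The main obstacle is upgrading $F\in\C(x,y)$ to $F\in\cK(x,y)$; this is exactly the content of the footnote. Here I would let $\sigma\in\Aut(\C/\cK)$ act on coefficients. The $q$-expansions of $\tilde f$, of $j(\tau)$ and of $j(N\tau)$ are all $\cK$-rational, hence $\sigma$-invariant, so applying $\sigma$ to $\tilde f=F(j(\tau),j(N\tau))$ yields $(F-F^{\sigma})\bigl(j(\tau),j(N\tau)\bigr)=0$. To conclude $F=F^{\sigma}$ I would write $\tilde f$ as the unique polynomial in $j(N\tau)$ of degree $<\deg_y\Phi_N$ with coefficients in $\C\bigl(j(\tau)\bigr)$, where $\Phi_N$ is the modular polynomial relating $j(\tau)$ and $j(N\tau)$, apply $\C(t)^{\Aut(\C/\cK)}=\cK(t)$ to each coefficient, and use this uniqueness to force $\sigma$-invariance of $F$ for every $\sigma$, i.e.\ $F\in\cK(x,y)$. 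Granting this, the last assertion is immediate: if $E,E'$ are defined over $\cK$ then $a,b,\Delta_E,j(E),j(E')\in\cK$ and $\tilde F\in\cK(x,y)$, so the right-hand side, and with it $\bigl(\tfrac{2\pi}{\Omega_1}\bigr)^k f(\tau)$, lies in $\cK$.
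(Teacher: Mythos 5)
Your proof is correct and follows essentially the same route as the paper: cancel the weight with $E_4^m E_6^n \Delta^u$, invoke Cox's theorem to write the resulting weight-zero function as $F(j(\tau),j(N\tau))$, descend $F$ to $\cK(x,y)$ via the unique polynomial representation in $j(N\tau)$ of degree $<\deg_y\Phi_N$ together with Galois invariance of $q$-expansions, and unwind the homothety constants using $a=-60G_4(\Lambda)$, $b=-140G_6(\Lambda)$ and \eqref{deldel}. The only cosmetic difference is your normalisation $4m+6n=k+12u$ (with $\Delta^u$ in the numerator of $\tilde f$), which in fact matches the displayed formula $\frac{a^m b^n}{\Delta_E^u}F(j(E),j(E'))$ more directly than the convention $4m+6n+k=12u$ used in the paper's preceding discussion.
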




\end{document}